\title{Relative stable pairs and a non-Calabi-Yau wall crossing}
\author{Tudor P\u adurariu}
\address{Columbia University, 
Mathematics Hall, 2990 Broadway, New York, NY 10027}
\email{tgp2109@columbia.edu}
\date{}
\newtheorem{thm}{Theorem}[section]
\newtheorem{prop}[thm]{Proposition}
\newtheorem{lemma}[thm]{Lemma}
\newtheorem{conjecture}[thm]{Conjecture}
\theoremstyle{definition}
\newtheorem{defn}[thm]{Definition}
\newtheorem{thm*}[thm]{Theorem$^*$}
\newcommand{\comment}[1]{}
\renewcommand{\leq}{\leqslant}
\renewcommand{\geq}{\geqslant}
\newcommand{\dual}{\vee}
\newcommand{\F}{\mathcal{F}}
\newcommand{\OO}{\mathcal{O}}
\newcommand{\HH}{\mathcal{H}}
\newcommand{\I}{\mathcal{I}}
\newcommand{\K}{\mathcal{K}}
\newcommand{\ch}{\operatorname{ch}}
\newcommand{\C}{\mathbb{C}}
\begin{document}
\maketitle

\begin{abstract}
Let $Y$ be a smooth projective threefold and let $f:Y\to X$ be a birational map with $Rf_*\mathcal{O}_Y=\mathcal{O}_X$.
When $Y$ is Calabi-Yau, Bryan--Steinberg defined enumerative invariants associated to such maps called $f$-relative stable (or Bryan-Steinberg) invariants. When $X$ has Gorenstein singularities and $f$ has relative dimension one, they compared these invariants to the Donaldson-Thomas, or equivalently Pandharipande-Thomas invariants of $Y$.

We define Bryan--Steinberg invariants for maps $f$ as above without assuming that $Y$ is Calabi-Yau. For $X$ with Gorenstein and rational singularities, $f$ of relative dimension one, and for insertions from $X$ and arbitrary descendant levels, we conjecture a relation between the generating functions of Bryan-Steinberg and Pandharipande-Thomas invariants of $Y$.
 We check the conjecture for 
the contraction $f: Y\to X$ of a rational curve $C$ with normal bundle $N_{C/Y}\cong \mathcal{O}_C(-1)^{\oplus 2}$ using degeneration and localization techniques to reduce to a Calabi-Yau situation, which we then treat using Joyce's motivic Hall algebra.
\end{abstract}

\section{Introduction}

\subsection{Bryan-Steinberg pairs in the Calabi-Yau case}

Let $Y$ be a smooth projective threefold and let $f:Y\to X$ be a birational map with $Rf_*\mathcal{O}_Y=\mathcal{O}_X$. 

When $Y$ is Calabi-Yau, Bryan--Steinberg
defined invariants which ``count" certain two term complexes, which we call BS pairs; they are called $f$-relative stable pairs in the original paper \cite{bs}. When $X$ has Gorenstein and rational singularities,
Bryan--Steinberg 
showed that the generating series of BS invariants 
is
\begin{equation}\label{BSratio}
    \text{BS}^f(q)=\frac{\text{PT}^Y(q)}{\text{PT}^{Y, \text{exc}}(q)},
    \end{equation}
where $\text{PT}^Y(q)$ is the generating series of Pandharipande-Thomas (PT) invariants of $Y$ and $\text{PT}^{Y, \text{exc}}(q)$ is the generating series of PT invariants supported on exceptional curves, see Theorem \ref{wcbryan}. When $X$ as above is the coarse space of certain Calabi-Yau DM stacks of dimension $3$, 
the ratio on the right hand side of \eqref{BSratio}, or equivalently the analogous ratio of Donaldson-Thomas (DT) series \cite{b}, appears in the statement of the DT crepant resolution conjecture of Bryan--Cadman--Young \cite{bcy}. The introduction of BS pairs was a first step towards proving the DT crepant resolution conjecture by providing a geometric interpretation of the ratio of PT series in \eqref{BSratio}. The conjecture was eventually proved by Beentjes--Calabrese--Rennemo \cite{bcr} using motivic Hall algebra techniques.

\subsection{Bryan-Steinberg pairs beyond the Calabi-Yau case} 
We define BS pairs when $Y$ is not necessarily Calabi-Yau in the same way as Bryan--Steinberg.
Let $\text{Coh}_{\leq 1}(Y)$ be the abelian category of coherent sheaves on $Y$ with support of dimension at most $1$. 
Consider the torsion pair $(\textbf{T}, \textbf{F})$ of $\text{Coh}_{\leq 1}(Y)$ defined as follows: $\textbf{T}\subset \text{Coh}_{\leq 1}(Y)$ is the full subcategory of sheaves $T$ such that $$Rf_*T\in \text{Coh}_{\leq 0}(X)$$ is a sheaf on $X$ with support of dimension at most zero, and $\textbf{F}\subset \text{Coh}_{\leq 1}(Y)$ is its complement. A two-term complex \[I=\left[\OO_Y\xrightarrow{s} F\right]\] is called a \textit{Bryan-Steinberg (BS) pair} if $F\in\textbf{F}$ and 
$\text{coker}\,(s)\in\textbf{T}$. 
When $X$ is smooth and $f$ is the identity, BS pairs are the same as PT pairs. 
\smallbreak

Denote by $N_1(Y)$ the abelian group of curves in $Y$ up to numerical equivalence, by $N^{\text{exc}}_1(Y)\subset N_1(Y)$ the subspace of curves supported on the non-trivial fibers of $f$, and let $N_{\leq 1}(Y):=N_1(Y)\oplus\mathbb{Z}$ and $N^{\text{exc}}_{\leq 1}(Y):= N^{\text{exc}}_1(Y)\oplus\mathbb{Z}$.
Consider the moduli stack $\mathfrak{M}_Y(\OO)$ of sheaves on $Y$ with a section.
Bryan--Steinberg showed that the locus $$\text{BS}^f_n(Y,\beta)\subset \mathfrak{M}_Y(\OO)$$ of BS pairs $\left[\OO_Y\xrightarrow{s} F\right]$ with $\text{ch}\,(F)=(0, 0, \beta, n)\in H^{\text{even}}(Y)$ with $\beta\in H^4(Y, \mathbb{Z})$ a curve class and $n\in H^6(Y, \mathbb{Z})\cong \mathbb{Z}$ is a finite-type constructible set \cite{bs}. 
When $Y$ is Calabi-Yau, the BS invariants are defined using the Behrend function $\nu:\mathfrak{M}_Y(\OO)\to \mathbb{Z}$:
$$\text{BS}^f(Y,\beta,n):=\sum_{n\in\mathbb{Z}}n\,\chi\left(\nu^{-1}(n)\right).$$
In Section \ref{fund}, we define analogous enumerative invariants when $Y$ is not necessarily Calabi-Yau by integrating against a natural virtual fundamental class.

\begin{thm}\label{thm1}
Let $Y$ be a smooth projective threefold, let $f:Y\to X$ be a birational map with $Rf_*\mathcal{O}_Y=\mathcal{O}_X$,
and let $(\beta,n)\in N_{\leq 1}(Y)$. 
The functor $\Phi_{\text{BS}(Y)}$ of Bryan-Steinberg pairs $I=\left[\OO_Y\xrightarrow{s} F\right]$ with $\text{ch}\,(F)=(\beta, n)\in N_{\leq 1}(Y)$ is representable by a proper algebraic space $\text{BS}^f_n(Y,\beta)$ with a natural virtual fundamental class $\left[\text{BS}^f_n(Y,\beta)\right]^{\text{vir}}\in A_{d}\left(\text{BS}^f_n(Y,\beta)\right)$, where $d=-\chi\left(\text{RHom}(I,I)_0\right)$.
\end{thm}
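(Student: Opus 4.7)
The plan is to adapt the construction of moduli of Pandharipande--Thomas stable pairs, tilted by the Bryan--Steinberg torsion pair and generalized away from the Calabi--Yau hypothesis, so that representability, properness and a canonical perfect obstruction theory follow in parallel with the PT case.

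First, to obtain representability as an algebraic space, one checks that the functor $\Phi_{\text{BS}(Y)}$ is an \emph{open} subfunctor of the moduli stack $\mathfrak{M}_Y(\OO)$ of sheaves with a section: the conditions $F\in\mathbf{T}^\perp = \mathbf{F}$ and $\operatorname{coker}(s)\in\mathbf{T}$ are open in families, by upper-semicontinuity of the dimensions of $\operatorname{Supp} Rf_*F$ and $\operatorname{Supp} Rf_*\operatorname{coker}(s)$. Together with the finite-type result of Bryan--Steinberg, this gives a finite-type open substack. A BS pair has trivial automorphisms: any endomorphism $\varphi$ of $I=[\OO_Y\xrightarrow{s} F]$ acts as a scalar $\lambda$ on $\OO_Y$, and since $\operatorname{coker}(s)\in\mathbf{T}$ the section generates $F$ modulo a torsion piece, forcing $\lambda=1$. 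Properness then follows from the valuative criterion: boundedness lets one extend $F$ across a DVR, the section extends via torsion-freeness of $\operatorname{Hom}(\OO_Y, F_R)$, and closure of the BS condition under specialization is implied by semicontinuity applied to $(\mathbf{T},\mathbf{F})$.

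For the virtual fundamental class, the universal two-term complex $\mathcal{I}$ on $Y\times\text{BS}^f_n(Y,\beta)$ carries a truncated traceless Atiyah class
\[
\operatorname{at}_0 : \mathbb{L}_{\text{BS}^f_n(Y,\beta)}\longrightarrow R\pi_* R\lHom(\mathcal{I},\mathcal{I})_0[1],
\]
where $\pi$ is the projection to the moduli factor. Following Huybrechts--Thomas for complexes and Pandharipande--Thomas for stable pairs, this is expected to be a perfect obstruction theory of amplitude $[-1,0]$, producing the virtual class of Behrend--Fantechi of the stated rank $-\chi(\text{RHom}(I,I)_0)$ by Grothendieck--Riemann--Roch.

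The main obstacle is the verification that the obstruction theory has the correct amplitude in the non-Calabi--Yau setting. The vanishing $\operatorname{Ext}^0(I,I)_0 = 0$ follows from the rigidity argument above, but the vanishing of the dual piece $\operatorname{Ext}^3(I,I)_0$ is more delicate. In the PT case it uses that the section generates $F$ outside a zero-dimensional locus; for BS pairs the section only generates $F$ modulo $\mathbf{T}$, so one must show that the BS conditions rule out any nonzero map $I \to I\otimes\omega_Y[-1]$, and then invoke Serre duality on the smooth projective threefold $Y$. Once this vanishing is established, the rest of the construction is standard.
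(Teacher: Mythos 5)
Your strategy for openness and trivial automorphisms is in the right spirit (the paper reduces openness to openness of the two torsion-pair categories inside $\text{Coh}_{\leq 1}(Y)$ via the Lieblich stack, and shows $\text{Hom}(J,I)\hookrightarrow\text{Hom}(\OO_Y,\OO_Y)$ is injective by a cohomological support argument), and the use of the Huybrechts--Thomas Atiyah class to build the virtual class is the same approach as the paper. But there is a serious gap in your properness argument.

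You assert that properness reduces to ``closure of the BS condition under specialization,'' which you claim follows ``by semicontinuity applied to $(\textbf{T},\textbf{F})$.'' This is false as stated: the central fiber of a flat family of sheaves over a DVR whose generic fiber lies in $\textbf{F}$ need \emph{not} lie in $\textbf{F}$, and similarly the cokernel of the specialized section need not lie in $\textbf{T}$. Torsion-pair membership is an open condition, not a closed one, so one cannot take limits naively. The actual completeness proof (Proposition \ref{com} in the paper, about two pages and split into five steps) is a Langton-style argument: starting from an arbitrary flat extension $\HH$ of $F$ over $R$, one performs a chain of elementary modifications $\HH^{n+1}=\ker(\HH^n\to\text{torsion-free quotient of }\HH^n_k)$, then shows via the slope function and a carefully chosen Quot scheme that the process either terminates in a family with central fiber in $\textbf{F}$ or produces a contradiction; a second round of modifications is then needed to also arrange $\text{coker}(s)_k\in\textbf{T}$ while preserving $\HH_k\in\textbf{F}$. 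This is the technical heart of the representability result and cannot be replaced by a semicontinuity slogan. Separatedness (Proposition \ref{sep}) also needs an explicit argument, adapting the $\text{Hom}(\I^1,\I^2)\hookrightarrow R$ trick from Langton's paper, rather than the bare valuative criterion.

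Two smaller points. First, your displayed Atiyah-class map goes from $\mathbb{L}$ to the Ext complex; a Behrend--Fantechi perfect obstruction theory is a map \emph{to} the cotangent complex, and the paper's version is $R\pi_{2*}\bigl(R\lHom(\mathbb{I},\mathbb{I})_0\otimes\pi_1^*\omega_Y\bigr)[2]\to\mathbb{L}$, with the $\omega_Y$ twist coming from relative Serre duality built in from the start rather than invoked afterwards. Second, the vanishing you worry about in the non-CY case is $\text{Hom}(I,I\otimes\omega_Y)_0\cong\text{Ext}^3(I,I)_0^\vee$, not maps $I\to I\otimes\omega_Y[-1]$; and it is in fact handled by exactly the support-codimension argument used for rigidity, now twisted by the line bundle $\omega_Y$ (which does not affect the relevant local $\lExt$ vanishings), which is why the paper dispatches it with a reference to Proposition \ref{aut}.
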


By the Artin representability criterion, the functor $\Phi_{\text{BS}(Y)}$ is representable by a proper algebraic space if $\Phi_{\text{BS}(Y)}$ is open (Proposition \ref{open}), bounded (already proved by Bryan--Steinberg), separated (Proposition \ref{sep}), complete (Proposition \ref{com}), and has trivial automorphisms (Proposition \ref{aut}).
The proof of separatedness and completeness is similar to Langton's argument that the moduli of semistable sheaves on a projective variety has the same properties \cite{l}. 

By work of Beentjes--Calabrese--Rennemo \cite{bcr}, BS pairs are open in the derived category of gluable complexes on $Y$.
The existence of a virtual fundamental class for $\text{BS}^f_n(Y,\beta)$ now follows from the work of Huybrechts--Thomas \cite{ht}.

\subsection{Wall-crossing between BS and PT invariants}

When $Y$ is Calabi-Yau, consider the generating series of BS invariants \cite{bs}:
$$\text{BS}^f(q):=\sum_{(\beta, n)\in N_{\leq 1}(Y)}\text{BS}^f(Y,\beta, n)\,q^{\beta+n}.$$ 
The generating series $\text{PT}^Y(q)$ of PT invariants is defined similarly. We also consider the generating series $$\text{PT}^{Y, \text{exc}}(q):=\sum_{(\beta, n)\in N^{\text{exc}}_{\leq 1}(Y)}\text{PT}(Y,\beta, n)\,q^{\beta+n}.$$ 
Both generating series are elements of the Laurent ring $\mathbb{C}[\Delta]_{\Phi}$, see Definition \ref{Laurent}.
Bryan--Steinberg proved the following wall-crossing theorem using identities in the motivic Hall algebra of $Y$:

\begin{thm}\label{wcbryan}
Let $X$ be a projective threefold with Gorenstein and rational singularities and let $f: Y\to X$ be a resolution of singularities of relative dimension $1$ with $Y$ a projective Calabi-Yau. The generating series for BS and PT invariants are related by
$$\text{BS}^f(q)=\frac{\text{PT}^Y(q)}{\text{PT}^{Y, \text{exc}}(q)}.$$
\end{thm}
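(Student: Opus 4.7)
\smallskip
\noindent\textbf{Proof proposal.} The plan is to follow the motivic Hall algebra strategy of Joyce and Bridgeland, in the form implemented by Bryan--Steinberg. The underlying picture is that BS pairs and PT pairs sit in two adjacent ``stability chambers'' for the same moduli problem of pairs $[\OO_Y\to F]$, and the ratio in the theorem is the contribution of the wall between these chambers, which is populated by $1$-dimensional sheaves supported on the exceptional locus of $f$.

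First I would set up the motivic Hall algebra $\RH(\A)$ of $\A=\Coh_{\leq 1}(Y)$ together with the corresponding Hall algebra of pairs $[\OO_Y\xrightarrow{s}F]$ with $F\in\A$, working over the Grothendieck ring $\kst{\C}$. The torsion pair $(\textbf{T},\textbf{F})$ on $\A$ induces two natural torsion-type conditions on pairs: the BS chamber imposes $F\in\textbf{F}$ and $\coker(s)\in\textbf{T}$, while the PT chamber imposes $F$ pure of dimension $1$ and $\coker(s)$ of dimension $0$. The two conditions differ only on the ``exceptional'' part of $F$, so the wall between them is populated precisely by $1$-dimensional sheaves supported on the exceptional locus of $f$.

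Next I would establish the key Hall algebra identity
$$\bigl[\M^{\text{PT}}(Y)\bigr]\;=\;\bigl[\M^{\text{BS}}(Y)\bigr]\;\star\;\bigl[\M^{\text{PT,exc}}(Y)\bigr],$$
where $\star$ denotes the Hall algebra convolution and the three brackets are the characteristic stack functions of PT pairs, BS pairs, and PT pairs on exceptional curves respectively. The identity encodes the fact that every PT pair admits a canonical two-step filtration whose graded pieces are a BS pair and a PT pair supported on exceptional curves, and vice versa. Proving it reduces to showing that the stack of short exact sequences realizing this filtration is isomorphic to the appropriate fibered product over $\M^{\text{PT}}(Y)$; this in turn follows from the uniqueness of the $(\textbf{T},\textbf{F})$-decomposition combined with Beentjes--Calabrese--Rennemo's openness result for BS complexes in the derived category \cite{bcr}.

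Third, I would apply Joyce's integration morphism $I:\RHsc\to\C[\Delta]_\Phi$, which sends a constructible stack carrying a symmetric obstruction theory to its Behrend-weighted Euler characteristic. Because $Y$ is a Calabi-Yau threefold, $I$ is a ring homomorphism on the semiclassical subalgebra $\RHsc$, so applying it to the Hall algebra identity yields $\text{PT}^Y(q)=\text{BS}^f(q)\cdot\text{PT}^{Y,\text{exc}}(q)$, from which the theorem follows by division.

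The main obstacle is to check that the three stack classes above actually lie in the semiclassical subalgebra $\RHsc$ where $I$ is multiplicative, i.e.\ that they satisfy Joyce's no-poles condition. This requires a local analysis of the exceptional fibers of $f$ that uses the Gorenstein hypothesis (so that the symmetric obstruction theory behaves well along these fibers) together with the rational singularity hypothesis (which controls the geometry of the exceptional trees) in an essential way. A secondary technical point is the convergence of the relevant generating series in the topology of $\C[\Delta]_\Phi$, but this follows routinely from the boundedness of the BS and PT moduli in each fixed numerical class.
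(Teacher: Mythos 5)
First, note that the paper does not itself prove Theorem~\ref{wcbryan}; it is cited from Bryan--Steinberg \cite{bs} (the text immediately before the theorem says so). However, Section~\ref{hall} proves the closely related Theorem~\ref{halln} using the same machinery, so the intended proof structure is visible in the paper and can be compared to your proposal.

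The central step of your proposal is the Hall algebra identity
$\bigl[\M^{\text{PT}}(Y)\bigr]=\bigl[\M^{\text{BS}}(Y)\bigr]\star\bigl[\M^{\text{PT,exc}}(Y)\bigr]$,
and this identity does not hold; the attempted justification via ``every PT pair admits a canonical two-step filtration whose graded pieces are a BS pair and a PT pair supported on exceptional curves'' does not even type-check. In the ambient category $\textbf{A}=\langle\OO_Y[1],\Coh_{\leq 1}(Y)\rangle_{\text{exc}}$ both a BS pair and a PT pair carry one copy of the class of $\OO_Y[1]$, so a short exact sequence with a PT pair in the middle cannot have a BS pair and another pair as its outer terms. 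The genuine identity on which both Bridgeland's DT/PT proof \cite{b} and Bryan--Steinberg's argument \cite{bs} rest is the one appearing here as Proposition~\ref{eq} (and in \cite[Prop.~6.5]{b}, \cite[Prop.~75]{bs}): for $s\in\{a,b\}$, one has $\text{DT}_{\pi}*1_{\textbf{T}_s}=\text{DT}_{\pi}^s*1_{\textbf{T}_s}*\text{PT}_{\pi}^s$. This says that the pairs one wants are related to each other only through the auxiliary DT (Hilbert-scheme) classes and through conjugation by the torsion class $1_{\textbf{T}_s}$, not by a direct convolution.

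Because of this, the second and third steps of your plan also miss where the genuine technical work lies. The three characteristic classes $[\M^{\text{PT}}]$, $[\M^{\text{BS}}]$, $[\M^{\text{PT,exc}}]$ are classes of algebraic spaces with trivial automorphisms, hence are automatically ``regular'' and present no no-poles issue at all; the Gorenstein/rational hypotheses are not what makes those classes well behaved. The delicate point in \cite{bs} (reproduced here as Propositions~\ref{iden} and~\ref{nopole}) is that the auxiliary stacky classes $1_{\textbf{T}_s}$ and $1_{\text{SS}(\tau)}$ are \emph{not} regular, and one needs Joyce's no-pole theorem to show that $1_{\text{SS}(\tau)}$ exponentiates an element of $(\mathbb{L}-1)H_{\text{reg}}$, so that $\text{Ad}_{1_{\text{SS}(\tau)}}$ preserves the regular subalgebra and descends to a Poisson automorphism of $H_{\text{sc}}$. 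Only after that is the ``division'' by the DT-type and torsion factors licit, and the conclusion is obtained by combining the two instances $s=a$ (BS torsion pair) and $s=b$ (PT torsion pair) of the DT identity together with the integration map. You would also need the boundedness/Laurent statements (Proposition 5.7 and Definition 5.9 here, \cite[Lemmas~67--68]{bs}) to even form the relevant elements of $H_\Phi$; that part of your proposal is correct but is stated as a ``secondary technical point'' when in fact it is one of the places where the rational-singularity assumption enters. In short, your general strategy (motivic Hall algebra, integration map, wall-crossing) is the right one, but the specific key identity is wrong and the no-poles difficulty is located at the wrong place.
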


For $Y$ not necessarily Calabi-Yau, we can define BS and PT invariants with insertions, see Subsections \ref{inse} and \ref{pt}, respectively. 
Let $(\beta, n)\in N_{\leq 1}(Y)$ and consider insertions $\gamma_1,\cdots,\gamma_r\in f^*H^{\cdot}(X,\mathbb{Z})$ and descendent levels $\kappa_1,\cdots, \kappa_r\geq 0$. We define BS invariants with insertions $\langle \tau_{\kappa_1}(\gamma_1),\cdots, \tau_{\kappa_r}(\gamma_r)\rangle_{\beta,n}$, see Subsection \ref{inse}. Consider the generating series of BS invariants with insertions $\gamma_1,\cdots, \gamma_r$ and descendant levels $\kappa_1,\cdots, \kappa_r\geq 0$:
$$\text{BS}^f(q; \gamma, \kappa):=\sum_{(\beta, n)\in N_{\leq 1}(Y)} \langle \tau_{\kappa_1}(\gamma_1),\cdots, \tau_{\kappa_r}(\gamma_r)\rangle_{\beta,n}\,q^{\beta+n}.$$ 
The definitions of the generating series $\text{PT}^Y(q; \gamma, \kappa)$ and $\text{PT}^{Y, \text{exc}}(q)=\text{PT}^{Y, \text{exc}}(q; \emptyset, \emptyset)$ are similar. 

\begin{conjecture}\label{conj}
Let $f:Y\to X$ be as in Theorem \ref{wcbryan}, but $Y$ no longer necessarily Calabi-Yau, let $E\subset Y$ be the exceptional locus, and let $Z:=f(E)\subset X$. 
Consider insertions $\gamma_1,\cdots, \gamma_r\in f^*H^{\geq \dim Z+1}(X,\mathbb{Z})$ and arbitrary descendant levels $\kappa_1,\cdots, \kappa_r\geq 0$. The generating series for BS and PT invariants with these corresponding insertions and descendant levels are related by
$$\text{BS}^f(q; \gamma, \kappa)=\frac{\text{PT}^{Y}(q; \gamma, \kappa)}{\text{PT}^{Y, \text{exc}}(q)}.$$
\end{conjecture}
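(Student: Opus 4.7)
The strategy is to generalize the degeneration-plus-localization argument that this paper uses for the $(-1,-1)$ contraction. There are three conceptual ingredients: a degeneration of $Y$ that pushes the $f$-exceptional locus into a local Calabi--Yau threefold, the observation that insertions pulled back from $X$ are trivial along exceptional fibers, and the Bryan--Steinberg wall-crossing identity on the resulting local Calabi--Yau piece.

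First I would construct a deformation $\pi:\mathcal{Y}\to\mathbb{A}^1$ whose generic fiber is $Y$ and whose central fiber is a nodal gluing $Y'\cup_D\widetilde{Y}$, where $Y'$ is a birational modification of $Y$ with the $f$-exceptional locus contracted and $\widetilde{Y}$ is a local Calabi--Yau threefold modeling a formal neighborhood of the exceptional curves, e.g.\ the total space of their normal bundle after an appropriate twist. Because $X$ has Gorenstein rational singularities and $f$ has relative dimension one, the exceptional set is a tree of rational curves with tractable normal data, so a deformation to the normal cone of the exceptional locus is available. I would then apply the PT descendant degeneration formula of Maulik--Pandharipande--Thomas and Li--Wu to $\text{PT}^Y(q;\gamma,\kappa)$, and establish an analogous degeneration formula for the BS generating series using the virtual fundamental class of Theorem \ref{thm1} together with Li's expanded degenerations.

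The second step is the insertion analysis. For $\gamma_i\in f^*H^\cdot(X,\mathbb{Z})$, the restriction of $\gamma_i$ to any $f$-exceptional fiber is pulled back from a point, so the descendant operator $\tau_{\kappa_i}(\gamma_i)$ acts on the $\widetilde{Y}$ side only through the total numerical class of the universal ideal. Consequently, in the degeneration formula all insertions attach to the $Y'$ component, while the $\widetilde{Y}$ component contributes only the insertion-free exceptional generating series. Invariance of the exceptional part of the PT series under this degeneration identifies that contribution with $\text{PT}^{Y,\text{exc}}(q)$. On $Y'$ the contracted map is an isomorphism away from a codimension $\geq 2$ locus, so BS and PT pairs coincide there, and the $Y'$ contribution to the BS series with insertions equals the corresponding PT series with insertions.

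Applying the Calabi--Yau Bryan--Steinberg identity of Theorem \ref{wcbryan} on $\widetilde{Y}$ gives $\text{BS}^{\widetilde f}(q)=\text{PT}^{\widetilde Y}(q)/\text{PT}^{\widetilde Y,\text{exc}}(q)$, and assembling the three contributions yields the conjectured ratio. The main obstacle will be Step 1: establishing a BS degeneration formula with descendants when $Y$ is not Calabi--Yau. This requires extending the openness, separatedness, and completeness results of the paper (Propositions \ref{open}, \ref{sep}, \ref{com}) to the relative setting over the base of the degeneration, so that the BS torsion pair $(\textbf{T},\textbf{F})$, which is defined via $Rf_*$, specializes correctly across the central fiber; and it requires matching the Huybrechts--Thomas virtual class on the BS moduli with the virtual class on Li's stack of expanded pairs, which is delicate outside the Calabi--Yau setting because the obstruction theory for BS pairs is more subtle than for ideal sheaves and does not a priori enjoy the self-duality used in the Calabi--Yau case.
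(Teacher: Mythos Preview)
This statement is a \emph{conjecture}; the paper does not prove it in general. It establishes only the special case of a single $(-1,-1)$ curve (Theorem \ref{thm2}), and your proposal should be read as an attempt to extend that argument. The degeneration step and the insertion analysis you outline are essentially what the paper does in Section \ref{deg} and in Step 2 of the proof of Proposition \ref{propa}: the paper proves a BS degeneration formula (Theorem \ref{degen}) and uses that classes in $f^*H^\cdot(X,\mathbb{Z})$ land entirely on the $\text{Bl}_C Y$ factor. So the first two thirds of your plan match the paper.

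The genuine gap is your Step 3. You propose to invoke Theorem \ref{wcbryan} directly on the local piece $\widetilde{Y}$, but this is \emph{not} what the paper does even in the $(-1,-1)$ case, and for good reason. The local component appearing in the degeneration is the compactification $\mathbb{P}=\mathbb{P}_C(\OO(-1)^{\oplus 2}\oplus\OO)$, which is neither Calabi--Yau nor covered by Theorem \ref{wcbryan}; moreover the invariants that enter the degeneration formula are \emph{relative} to the divisor $S$, not absolute. The paper instead applies virtual localization on $\mathbb{P}$ for the two-torus $T$ preserving the Calabi--Yau form on $\mathbb{P}\setminus S$, shows that the $T$-fixed perfect obstruction theory is symmetric (Proposition \ref{prof}), and then runs a separate motivic Hall algebra argument on an auxiliary toric Calabi--Yau threefold $W$ to compare the $T$-fixed BS and PT series (Theorem \ref{halln}). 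None of this is a direct citation of Theorem \ref{wcbryan}.

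For a general contraction $f$ satisfying the hypotheses of the conjecture, this localization step is precisely where the argument breaks: the local model of the exceptional locus need not carry a torus action with Calabi--Yau-preserving subtorus, so there is no analogue of Proposition \ref{prof} or of the passage to $W$. Your identification of the BS degeneration formula as ``the main obstacle'' is therefore misplaced; the paper already proves that in Section \ref{deg}. The real obstruction to the general conjecture is the absence, outside the toric local model, of any mechanism to reduce the relative BS/PT comparison on the non-Calabi--Yau bubble to a symmetric-obstruction-theory or Hall-algebra computation.
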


 The conjecture above is similar to the DT/PT correspondence conjectured by Pandharipande--Thomas \cite[Conjecture 3.28]{pt} and to the DT crepant resolution conjecture of Bryan--Cadman--Young \cite{bcy}. The DT/PT correspondence was proved in the Calabi-Yau case by Bridgeland \cite{b}, Toda \cite{t1} using the machinery of motivic Hall algebras developed by Joyce \cite{j1}, \cite{j2}, \cite{j3}, and Joyce--Song \cite{js}. The DT/PT correspondence is not known in non-Calabi-Yau cases. 

When $Y$ is Calabi-Yau, the definition of BS invariants using the virtual fundamental class from Theorem \ref{thm1} is the same as the one using the Behrend function by the main result in \cite{be}, so
Conjecture \ref{conj} follows from Theorem \ref{wcbryan}. In contrast to the DT/PT correspondence, which is not known in non-Calabi-Yau cases, the BS/ PT correspondence can be established in some situations when $Y$ is not Calabi-Yau. The following is our main result:
\begin{thm}\label{thm2}
Conjecture \ref{conj} holds for $f:Y\to X$ the contraction of a curve $C\cong \mathbb{P}^1$ with normal bundle $N_{C/Y}\cong \OO_C(-1)^{\oplus 2}.$
\end{thm}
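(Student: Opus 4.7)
The plan is to reduce the identity to the local Calabi--Yau model $\operatorname{Tot}(\OO_{\mathbb{P}^1}(-1)^{\oplus 2})$ around $C$ by means of a degeneration to the normal cone combined with $\mathbb{C}^*$-equivariant localization, and then invoke Theorem~\ref{wcbryan} (or equivalently the underlying Hall algebra identity used in its proof). Form the deformation to the normal cone $\mathcal{Y} \to \mathbb{A}^1$ with generic fiber $Y$ and central fiber $Y_0 = \tilde{Y} \cup_E P$, where $\tilde{Y}$ is the blowup of $Y$ along $C$, $P = \mathbb{P}_{\mathbb{P}^1}(\OO(-1) \oplus \OO(-1) \oplus \OO)$ is the projective completion of $N_{C/Y}$, and $E = \mathbb{P}_{\mathbb{P}^1}(\OO(-1)^{\oplus 2})$ is the glueing divisor. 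The contraction $f$ extends to $\mathcal{Y} \to \mathcal{X}$: on the central fiber it is an isomorphism on $\tilde{Y}$ and on $P$ it contracts the section $C_\infty$ coming from the $\OO$ summand, whose normal bundle in $P$ is again $\OO(-1)^{\oplus 2}$.

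Next, apply the Li--Wu/Maulik--Pandharipande--Thomas degeneration formula on the PT side to write $\text{PT}^Y(q;\gamma,\kappa)$ as a product of relative PT series for $(\tilde{Y}, E)$ and $(P, E)$, and prove the analogous formula on the BS side. Since the torsion pair $(\mathbf{T},\mathbf{F})$ is defined by the open condition $Rf_*F \in \Coh_{\leq 0}$ and $f$ extends across the family, one shows that $\mathbf{F}$ spreads out to cut an open substack of the relative moduli. On $\tilde{Y}$ the exceptional curve has been blown up, so BS equals PT there; the problem reduces to the same identity of relative series on $(P,E)$.

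On $P$ use the $\mathbb{C}^*$-action that scales only the $\OO$ summand of $\OO(-1)\oplus\OO(-1)\oplus\OO$, which preserves $E$ and fixes $C_\infty$. Applying Graber--Pandharipande virtual localization to the relative BS and relative PT moduli of $(P,E)$, the fixed loci split into (i) a portion supported near $E$, where the contraction is trivial and so BS and PT coincide, and (ii) a portion supported near $C_\infty$, whose formal neighborhood is the local Calabi--Yau $\operatorname{Tot}(\OO_{\mathbb{P}^1}(-1)^{\oplus 2})$. The insertions $\gamma_i \in f^*H^\cdot(X,\mathbb{Z})$ restrict trivially to $C_\infty$, so they enter only through the $E$-side. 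The $C_\infty$-portion therefore reduces to the Calabi--Yau identity
$$\text{BS}^{f_{\mathrm{loc}}}(q) \;=\; \text{PT}^{Y_{\mathrm{loc}}}(q)\big/\text{PT}^{Y_{\mathrm{loc}},\,\mathrm{exc}}(q)$$
for the contraction of the zero section of $\operatorname{Tot}(\OO(-1)^{\oplus 2})$, which is a direct application of Theorem~\ref{wcbryan}, and can equivalently be derived by executing Joyce's motivic Hall algebra wall-crossing in this local Calabi--Yau as in \cite{bs} and \cite{bcr}. Reassembling these contributions through the degeneration formula yields the claim.

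The principal obstacle is establishing the BS degeneration formula: the condition $Rf_*F \in \Coh_{\leq 0}$ is not a standard stability condition, and one must verify both that it defines an open substack flat over $\mathbb{A}^1$ and that the virtual class from Theorem~\ref{thm1} satisfies the Li--Wu gluing axiom for relative moduli, so that degenerating and gluing BS pairs remains compatible with the torsion pair on each component. A secondary technical point is propagating the descendants $\tau_{\kappa_i}(\gamma_i)$ through both the degeneration and the localization; this is considerably simplified by the hypothesis that the $\gamma_i$ pull back from $X$, so they vanish on $C_\infty$ and do not interact with the Calabi--Yau piece of the calculation.
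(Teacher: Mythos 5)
Your high-level strategy (degenerate to $\tilde Y\cup_E P$, localize on $P$, then invoke a Calabi--Yau wall-crossing) is the same skeleton the paper uses, and you correctly identify the BS degeneration formula as a serious technical input. But there are three substantive gaps in the localization and reduction steps that the paper has to address carefully and that your write-up skips.

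First, the choice of torus matters. You localize with the $\mathbb{C}^*$ scaling only the $\OO$ summand, but this action scales the Calabi--Yau three-form on $N=P\setminus E \cong \mathrm{Tot}(\OO_C(-1)^{\oplus 2})$ (it acts on the fibre coordinates $(a,b)$ of $N$ by $(a,b)\mapsto (t^{-1}a,t^{-1}b)$, hence rescales $da\wedge db\wedge dz$ by $t^{-2}$). The paper instead localizes with respect to the two-dimensional subtorus $T\subset(\mathbb{C}^*)^3$ that \emph{preserves} the CY form on $N$; this is exactly what makes the restricted perfect obstruction theory on the $T$-fixed loci \emph{symmetric} (Proposition~\ref{prof}), which in turn is what permits re-expressing the localized invariants as Behrend-weighted Euler characteristics and feeding them into a Hall-algebra wall-crossing. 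With your one-parameter torus the fixed obstruction theory has no reason to be symmetric, and the passage from localized residues to Behrend functions breaks down.

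Second, the fixed loci do not split into disjoint ``near $E$'' and ``near $C_\infty$'' pieces as you assert. Over $0,\infty\in C_\infty$ there are torus-invariant legs $L_1,\dots,L_4$ running from $C_\infty$ out to $E$, and a single fixed BS or PT configuration can (and typically does) have support on $C_\infty$ \emph{and} on these legs simultaneously. The two ``sides'' are coupled by the partition profile $\pi=(\pi_i)_{i=1}^4$ along the legs, and the paper's localization bookkeeping (Subsections~\ref{sss}--\ref{lrbspt2}, with the decompositions \eqref{decoBS},\eqref{decoPT} and the spaces $\mathrm{BS}_n(\pi,m)$, $\mathrm{PT}_n(\pi,m)$) is precisely there to track this coupling. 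Your argument would need this extra layer to be correct.

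Third, the terminal CY reduction cannot be ``a direct application of Theorem~\ref{wcbryan}'': that theorem requires $Y$ to be a projective Calabi--Yau, while $\mathrm{Tot}(\OO_{\mathbb{P}^1}(-1)^{\oplus 2})$ is non-compact (so is $\mathbb{P}$ viewed as a quasi-projective CY away from $S$, where $\omega_{\mathbb{P}}$ is non-trivial). The paper resolves this by introducing a \emph{compact} toric Calabi--Yau $W$ containing a $(-1,-1)$-curve with proper legs, and carrying out the Bryan--Steinberg Hall algebra identity for invariants with fixed partition profile $\pi$ inside $W$ (Theorem~\ref{halln}); this compactification is needed so that the no-pole theorem, the integration map, and the local critical-chart description of Behrend functions all apply. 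Saying the identity ``can equivalently be derived by executing Joyce's motivic Hall algebra wall-crossing in this local Calabi--Yau'' glosses over exactly this issue, which is where most of the work in Section~\ref{hall} lives.
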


We next explain the main steps of the proof of Theorem \ref{thm2}:
\\

\textbf{Step 1.} In Section \ref{deg}, we define relative BS pairs and prove a degeneration formula for BS pairs similar to the degeneration formula for DT, PT pairs proved by Li--Wu \cite{lw}. 
\\

\textbf{Step 2.} In Section \ref{wc}, we use the degeneration formulas for BS and PT invariants for the family
\begin{equation}\label{fami2}
    \text{Bl}_{C\times 0}\left(Y\times\mathbb{A}_{\mathbb{C}}^1\right)\to \text{Bl}_{p\times 0}\left(X\times\mathbb{A}_{\mathbb{C}}^1\right)
    \end{equation}
    over $\mathbb{A}_{\mathbb{C}}^1$,
where $p\in X$ is the singular point. Let $g:\mathbb{P}\to \mathbb{P}'$ be the contraction of the curve $C$.
The fiber over zero is 
$$\text{Bl}_CY\cup_S\mathbb{P},$$
where we use the notations $\mathbb{P}:=\mathbb{P}_C\left(\OO(-1)^{\oplus 2}\oplus \OO\right)$ and $S:=\mathbb{P}_C\left(\OO(-1)^{\oplus 2}\right)$. 
When using the degeneration formulas for PT and BS invariants for the family \eqref{fami2}, the insertions will be in the $\text{Bl}_CY$ part. 
Theorem \ref{thm2} follows from a correspondence between generating series of relative invariants $\text{BS}^{g/S}(q)$ and $\text{PT}^{\mathbb{P}/S}(q)$
with no insertions. 
\\

\textbf{Step 3.} Also in Section \ref{wc}, we use the virtual localization theorem \cite{gp}, \cite{kr} for the torus $T\cong \left(\mathbb{C}^*\right)^2\subset \left(\mathbb{C}^*\right)^3$ acting on $\mathbb{P}$ and preserving the natural Calabi-Yau form on $\mathbb{P}\setminus S$. Let $0, \infty \in C$ be the $T$-invariant points.
The $T$-fixed BS or PT complexes $\left[\OO_Y\xrightarrow{s} F\right]$ restricted to $Y\setminus C$ will be ideal sheaves of a $T$-fixed curve which intersects $S$ transversely, so they will have certain partition profiles $\pi=(\pi_i)_{i=1}^4$ along four legs $(L_i)_{i=1}^4$ of $\mathbb{P}$ from $0$ and $\infty$.

Consider the moduli spaces of virtual dimension zero
$\text{BS}_n(\pi,m)$ and $\text{PT}_n(\pi,m)$
of $T$-fixed BS and PT complexes $\left[\OO_{\mathbb{P}}\xrightarrow{s} F\right]$ with fixed partition profile $\pi$, with 
$\text{ch}(F)=\left([\pi]+m[C], n\right)\in N_{\leq 1}(\mathbb{P})$, with $F$ intersecting $S$ transversely. Denote their generating series by $\text{BS}_{\pi}(q)$ and 
$\text{PT}_{\pi}(q)$. 
By the localization formula, the correspondence
between generating series of relative invariants $\text{BS}^{g/S}(q)$ and $\text{PT}^{\mathbb{P}/S}(q)$
follows from a correspondence between
$\text{BS}_{\pi}(q)$ and 
$\text{PT}_{\pi}(q)$. 
\\

\textbf{Step 4.} 
In Section \ref{hall}, we prove the following wall-crossing formula 
between the generating series of $T$-fixed BS and PT invariants with fixed partitions $\pi=(\pi_i)_{i=1}^4$ along the legs $L=(L_i)_{i=1}^4$:
\[\text{BS}_{\pi}(q)=\frac{\text{PT}_{\pi}(q)}{\text{PT}_0(q)}.\]
The proof follows very closely the argument in \cite{bs} using identities in the motivic Hall algebra of $W$, where $W$ is a toric Calabi-Yau $3$-fold which contains a curve $C\cong \mathbb{P}^1$ with normal bundle $N\cong\mathcal{O}_C(-1)^{\oplus 2}$ such that the four legs from the $\left(\mathbb{C}^*\right)^3$-fixed points $0, \infty\in C$ different from $C$ are proper. 
\smallskip

We remark that Bryan-Steinberg pairs are also known to be related to invariants defined via quasimaps. Let $f: S\to S':=\mathbb{C}^n/\Gamma$ be a minimal resolution of a type A singularity. In \cite{hl}, Henry Liu related Bryan-Steinberg pairs on $f: S\times \mathbb{C}\to S'\times\mathbb{C}$ and quasimaps to the Hilbert scheme of points in $S$. 

\subsection{Further directions.}
We hope the strategy for establishing the BS/ PT correspondence for non-Calabi-Yau situations can be used for larger classes of maps $f$, for example for $f$ a contraction of a curve $C$ with normal bundle of determinant $\omega_C$, or for $f$ with exceptional divisor $D$ a rational surface of normal bundle $\omega_D$. 

We also hope that the strategy exposed above can be used to establish non-Calabi-Yau correspondences between other theories, or to establish properties of PT invariants, such as those in \cite{BuMo}, beyond the Calabi-Yau case. 

\subsection{Acknowledgments}
I thank Davesh Maulik for many discussions about the present project and for suggesting the strategy to prove Theorem \ref{thm2}. I thank Chiu-Chu Melissa Liu for useful comments and suggestions for further directions. I thank Georg Oberdieck for pointing an issue with the proof of Proposition 3.7.

\section{Preliminaries}

\subsection{Notations and conventions}
All the stacks considered are defined over the complex numbers $\mathbb{C}$. For $X$ an algebraic space and $i\in\mathbb{Z}$, denote by $\text{Coh}\,(X)$ the abelian category of coherent sheaves on $X$, by $\text{Coh}_{\leq i}(X)$ its subcategory of sheaves with support of dimension $\leq i$ and define similarly $\text{Coh}_{i}(X)$ and $\text{Coh}_{\geq i}(X)$. Let $D^b(X)$ be the derived category of bounded complexes of coherent sheaves on $X$. For complexes $A, B\in D^b(X)$, denote by $\text{ext}^i_X(A, B)=\text{dim}\,\text{Ext}^i_X(A,B)$. 

For $f:Y\to X$, let $\mathbb{L}_{f}$ be the cotangent complex of $f$. For $\mathcal{X}$ an algebraic stack, let $\mathbb{L}_{\mathcal{X}}=\mathbb{L}_f$ for $f:\mathcal{X}\to\text{Spec}\,\mathbb{C}$. 

Let $Y$ be a smooth complex threefold. Denote by $\omega_Y$ its canonical line bundle. 
We say that $Y$ is \textit{Calabi-Yau} if $\omega_Y\cong \mathcal{O}_Y$ and $H^1(Y, \mathcal{O}_Y)=0$.



When considering generating series of BS or PT invariants with no insertions, we drop writing $\emptyset$ in the generating series, for example $\text{PT}^{Y, \text{exc}}(q)=\text{PT}^{Y, \text{exc}}(q; \emptyset, \emptyset)$.

\subsection{Perfect obstruction theories}
Let $X$ be a proper algebraic space. A two-term complex $$E^{\cdot}=\left[E^{-1}\to E^0\right]$$ of vector bundles on $X$ is called a \textit{perfect obstruction theory} for $X$ if there exists a morphism $$E^{\cdot}\to \mathbb{L}_X$$ in the derived category $D^b(X)$ which induces an isomorphism on $h^0$ and a surjection on $h^{-1}$. Let $d=\text{rk}\,E^0-\text{rk}\,E^{-1}$ be the rank of $E^{\cdot}$. A perfect obstruction theory induces a \textit{virtual fundamental class} $[X]^{\text{vir}}\in A_d(X), H_{2d}(X,\mathbb{Z})$, see \cite{bf}.
\\

\subsection{Symmetric perfect obstruction theories}\label{behrend}
Let $X$ be a proper algebraic space with a \textit{symmetric perfect obstruction theory} $$E^{\cdot}\to\mathbb{L}_X,$$ which means that $E^{\cdot}\to\mathbb{L}_X$ is a perfect obstruction theory and that there exists a non-degenerate symmetric bilinear form $\theta:E^{\cdot\dual}[1]\to E^{\cdot}$. By \cite[Subsection 1.1]{bf2}, we have that $\theta^{\vee}[1]=\theta$.
In this case, the virtual dimension $d$ is zero, so 
$$[X]^{\text{vir}}\in H_{0}(X,\mathbb{Z})\cong\mathbb{Z}.$$
Let $\nu:X\to\mathbb{Z}$ be the associated Behrend function \cite{be}. In loc. cit., Behrend proved that
$$[X]^{\text{vir}}=\sum_{n\in\mathbb{Z}} n\chi(\nu^{-1}(n)).$$ 
Let $M$ be a smooth algebraic space and $f:M\to\mathbb{C}$ a regular function with zero the only critical value. Let $X$ be the critical locus of $f$, and let $P\in X$. Then
$$\nu_X(P)=(-1)^{\text{dim}\,M}(1-\chi(M_P)),$$ where $M_P$ is the Milnor fiber of $f$ at $P$.
\\

\subsection{Localization}\label{localization}
Let $X$ be a proper algebraic space with an action of a torus $T$. Assume it has a $T$-equivariant perfect obstruction theory $E^{\cdot}$, meaning that $E^\cdot\in D^b_T(X)$ and the morphism $E^\cdot\to\mathbb{L}_X$ is $T$-equivariant.
Let $Z\subset X$ be a $T$-fixed algebraic space. Every vector bundle $V$ on $X$ splits over $Z$ as $V|_Z\cong V^f|_Z\oplus V^m|_Z,$ where $V^f|_Z\subset V|_Z$ is the sub-bundle where $T$ acts trivially and $V^m|_Z\subset V|_Z$ is the sub-bundle where $T$ acts with nonzero weight.

The $T$-fixed loci $Z\subset X$ admit perfect obstruction theories
$$E_Z:=\left[E^{-1,f}\big|_Z\to E^{0,f}\big|_Z\right].$$ The virtual normal bundle of the inclusion $Z\subset X$ is defined by $$N^{\text{vir}}:=\Big[E^{-1,m}\big|_Z\to E^{0,m}\big|_Z\Big].$$ 

Let $T$ act on $X$ with fixed connected components $X_k$ for $1\leq k\leq n$. The localization formula of Graber--Pandharipande \cite{gp}, Kresch \cite{kr} says that
$$[X]^{\text{vir}}=\sum^n_{k=1} \frac{[X_k]^{\text{vir}}}{e(N_k^{\text{vir}})}\in H_{\cdot}^T(X, \mathbb{Q})\otimes \text{Frac}\,H_T^{\cdot}(\text{pt})\cong H_{\cdot}\left(X^T, \mathbb{Q}\right)\otimes \text{Frac}\,H_T^{\cdot}(\text{pt}),$$ where $\text{Frac}\,H_T^{\cdot}(\text{pt})$ denotes the fraction field of the polynomial ring $H_T^{\cdot}(\text{pt})$.

\subsection{Laurent rings}\label{Laurent}
Let $Y$ be a smooth variety. Then $N_{\leq 1}(Y)$ is a finitely generated abelian group. Consider $\Delta\subset N_{\leq 1}(Y)$ the monoid of classes
$(\beta, n)$ where $\beta$ is effective or $\beta=0$ and $n\geq 0$.
Let $\mathbb{C}[\Delta]$ be the algebra with underlying $\mathbb{C}$-vector space generated by elements $q^{\beta+n}$ and with multiplication 
$$q^{\beta+n}q^{\gamma+m}=q^{(\beta+\gamma)+(n+m)}.$$
The algebra $\mathbb{C}[\Delta]$ is $\Delta$-graded. 
\\

\begin{defn}
For a $\Delta$-graded algebra $A$, define the Laurent completion $A_\Phi$ as follows: as a $\mathbb{C}$-vector space, it has elements
infinite series 
$$x=\sum_{(\beta,n)\in \Delta} x_{\beta, n}$$ such that for every $\beta\in N_1(Y)$, the set 
$\{n|\,x_{\beta,n}\neq 0\}$ is bounded from below. We call such infinite series \textit{Laurent}. 
For $(\beta, n)\in N_{\leq 1}(Y)$ and for $x$ a Laurent series as above, let $$\pi_{\beta,n}(x):=x_{\beta,n}.$$
The multiplication on $A_{\Phi}$ is defined as follows: for $x$ and $y$ Laurent series, $xy$ is the Laurent series such that for every $(\beta,n)\in N_{\leq 1}(Y)$, we have that
$$\pi_{\beta,n}(xy)=\sum_{(\beta,n)=(\gamma,m)+(\delta,p)}\pi_{\gamma,m}(x)\pi_{\delta,p}(y).$$ 
\end{defn}

Such a Laurent algebra comes with a natural topology by imposing that a sequence $(x_n)_{n\geq 0}$ of elements in $A_{\Phi}$ converges
if for every $(\beta, n)\in N_{\leq 1}(Y)$, there exists $K$ such that for every $i,j\geq K$ and $n\geq m$, we have that
$$\pi_{\beta,m}(x_i)=\pi_{\beta,m}(x_j).$$ The algebra $A_{\Phi}$ is a topological algebra.

\subsection{Torsion pairs}
Let $\textbf{C}$ be an abelian category with subcategories $\textbf{T}, \textbf{F}\subset \textbf{C}$. The pair $(\textbf{T}, \textbf{F})$ is called \textit{a torsion pair} if: 
\begin{itemize}
    \item For $T\in\textbf{T}$ and $F\in\textbf{F}$, we have that $\text{Hom}(T,F)=0$,
    \item For every $C\in \textbf{C}$, there exist (unique) $T\in\textbf{T}$ and $F\in\textbf{F}$ such that $$0\to T\to C\to F\to 0.$$
\end{itemize}

\subsection{Pandharipande-Thomas pairs}\label{pt}

Let $Y$ be a smooth threefold. We do not assume that $Y$ is projective.
Consider the abelian category
$\textbf{C}:=\text{Coh}_{\leq 1}(Y)$ 
of coherent sheaves with support of dimension at most $1$. Let $\textbf{T}:=\text{Coh}_{\leq 0}(Y)$ be the category of sheaves with support of dimension at most zero, and let 
$$\textbf{F}:=\{F\in \textbf{C}|\,\text{Hom}\,(T, F)=0\text{ for any }T\in \textbf{T}\}$$ be the orthogonal of $\textbf{T}$ in $\textbf{C}$.
The subcategories $(\textbf{T},\textbf{F})$ form a torsion pair of $\textbf{C}$.
\smallbreak

Let $(\beta, n)\in N_{\leq 1}(Y)$. Denote by $\text{PT}_n(Y,\beta)$ the moduli space of pairs $$I=\left[\mathcal{O}_Y\xrightarrow{s} F\right]$$ such that $\text{ch}\,(F)=(\beta, n)\in N_{\leq 1}(Y)$, $F\in \textbf{F}$, and  $\text{coker}\,(s)\in \textbf{T}.$ 

Assume that $Y$ is projective.
Then $\text{PT}_n(Y,\beta)$ is a projective variety. Consider the universal stable pair $$\mathbb{I}=\left[\mathcal{O}_{Y\times \text{PT}_n(Y,\beta)}\to\mathbb{F}\right]\in D^b\left(Y\times \text{PT}_n(Y,\beta)\right).$$ 
Let $R\mathcal{H}om(\mathbb{I},\mathbb{I})_0$ be the kernel of the trace map
$$\text{Tr}: R\mathcal{H}om(\mathbb{I},\mathbb{I})\to \mathcal{O}_{Y\times\text{PT}_n(Y,\beta)}.$$
The Atiyah class $\text{At}\in \text{Ext}^1\left(\mathbb{I}, \mathbb{I}\otimes \mathbb{L}_{Y\times \text{PT}_n(Y,\beta)}\right)$ induces a perfect obstruction theory $$E^{\cdot}:=R\pi_{2*}\big(R\mathcal{H}om(\mathbb{I},\mathbb{I})_0\otimes\pi_1^*\omega_Y [2]\big)\to \mathbb{L}_{\text{PT}_n(Y,\beta)}$$ using the results of Huybrechts--Thomas \cite{ht}, and thus a virtual fundamental class of dimension $$\text{dim}_\C [\text{PT}_n(Y,\beta)]^{\text{vir}}=-\chi\left(\text{RHom}(I, I)_0\right)=\beta\cdot c_1(Y).$$

\subsection{Generating series of PT invariants.}
Assume that $Y$ is projective.
Next, we define PT invariants with insertions. Recall the universal stable pair \[\mathbb{I}\in D^b(Y\times \text{PT}_n(Y,\beta)).\]
Consider a cohomology class $\gamma\in H^l(Y,\mathbb{Z})$ and an integer $k\geq 0$.
Define $$\ch_{2+k}(\gamma): H_*\left(\text{PT}_n(Y,\beta),\mathbb{Q}\right)\to H_{*-2k+2-l}\left(\text{PT}_n(Y,\beta),\mathbb{Q}\right)$$ by the formula $$\ch_{2+k}(\gamma)(-)=\pi_{2*}\left(\ch_{2+k}(\mathbb{I}) \pi_1^*(\gamma)\cap \pi_2^*(-)\right).$$ 
The PT invariants with insertions $\gamma_1,\cdots,\gamma_r\in H^{\cdot}(Y,\mathbb{Z})$ and descendant levels $\kappa_1,\cdots,\kappa_r\geq 0$ are defined by: 
\[\langle \tau_{\kappa_1}(\gamma_1),\cdots, \tau_{\kappa_r}(\gamma_r)\rangle_{\beta,n}=\int_{[\text{PT}_n(Y,\beta)]^{\text{vir}}} \ch_{2+\kappa_1}(\gamma_1)\cdots \ch_{2+\kappa_r}(\gamma_r).\]
The generating series for PT invariants of class $\beta$ with the given insertions and descendant levels is defined by:
$$\text{PT}^Y_{\beta}(q; \gamma, \kappa):=\sum_{n\in\mathbb{Z}} \langle \tau_{\kappa_1}(\gamma_1),\cdots, \tau_{\kappa_r}(\gamma_r)\rangle_{\beta,n}\, q^n.$$
Observe that $\text{PT}^Y_{\beta}(q; \gamma, \kappa)$ is a Laurent series in $q$, and thus the generating series 
\begin{align*}
    \text{PT}^Y(q; \gamma, \kappa)&:=\sum_{\beta\in N_{1}(Y)} \text{PT}^Y_{\beta}(q; \gamma, \kappa)q^{\beta},\\
    \text{PT}^{Y, \text{exc}}(q; \gamma, \kappa)&:=\sum_{\beta\in N_{ 1}^{\text{exc}}(Y)} \text{PT}^Y_{\beta}(q; \gamma, \kappa)q^{\beta}
\end{align*} are both elements of the Laurent ring $\mathbb{C}[\Delta]_{\Phi}$, see \cite[Lemma 5.5]{b}.

\subsection{Relative PT invariants}\label{relPT}
We review the theory of relative stable pairs, see \cite[Subsection 3.7]{pt} for definitions and desired properties and \cite{lw}, especially \cite[Subsection 6.4]{lw}, for proofs.

Let $S\subset Y$ be a smooth divisor with normal bundle $N_S$.
For $k\geq 1$, consider the $k$-step degeneration of $Y$:
$$Y[k]=Y\cup_S \mathbb{P}_S(N_S\oplus\OO)\cup_S \cdots\cup_S \mathbb{P}_S(N_S\oplus\OO),$$ where the union has $k$ copies of $\mathbb{P}_S(N_S\oplus\OO)$.
There are natural projection maps $\pi: Y[k]\to Y$.
Further, $Y[k]$ had $\left(\mathbb{C}^*\right)^k$ automorphisms covering the identity on $Y[0]$.

There is a Deligne-Mumford stack $\text{PT}_n(Y/S,\beta)$ 
parametrizing pairs $$\left[\mathcal{O}_{Y[k]}\xrightarrow{s} F\right],$$ where $F$ is a sheaf on $Y[k]$ such that $\pi_*[F]=\beta\in N_1(Y)$, $\chi(F)=n$, and:

\begin{itemize}
\item $F$ is pure and has a finite locally free resolution,

\item $F$ intersects the singular loci of $Y[k]$ and the relative divisor $S_{\infty}\subset Y[k]$ transversally,

\item $\text{coker}(s)$ has a zero dimensional support and is supported away from the singular loci of $Y[k]$,

\item the pair has only finitely many automorphisms covering the automorphisms of $Y[k]/Y$. 
\end{itemize}
 
 Assume that $Y$ is projective. The space $\text{PT}_n(Y/S,\beta)$ admits a perfect obstruction theory of the same dimension as that of $\text{PT}_n(Y,\beta)$.
 The complexes $$\left[\mathcal{O}_Y\xrightarrow{s} F\right]$$ where $F$ intersects $S$ transversely form an open set of $\text{PT}_n(Y/S,\beta)$, and the restriction of the perfect obstruction theory for this locus is the same as the perfect obstruction theory of $\text{PT}_n(Y,\beta)$. 
 Further, sending a sheaf $F$ to its restriction to $S$ defines a morphism $$\varepsilon: \text{PT}_n(Y/S,\beta)\to \text{Hilb}\,(S,d),$$ where $d=\beta\cdot S\in H_0(Y,\mathbb{Z})\cong\mathbb{Z}$.
 
Fix a basis $\beta_1,\cdots,\beta_m$ of $H^{\cdot}(S,\mathbb{Q})$. A \textit{cohomologically weighted partition} $\eta$ with respect to $(\beta_i)_{i=1}^m$ is a set of pairs
\begin{equation}\label{def:nu}
\{(\eta_1,\beta_{l_1}),\cdots, (\eta_s,\beta_{l_s})\},
\end{equation}
where $\eta_i$ are non-negative integers and $\sum_{i=1}^s \eta_i$ is an unordered partition of $\beta\cdot S=d$.
For a cohomologically weighted partition as above, let:
\begin{itemize}
    \item $l(\eta)=s$, 
    \item $|\eta|=\sum_{i=1}^s \eta_i=d$,
    \item $\text{Aut}\,(\eta)$ be the group of permutation symmetries of $\eta$, and
    \item $\xi(\eta)=\prod_{i=1}^s \eta_i|\text{Aut}(\eta)|$.
\end{itemize}
Assume that the basis $\beta_1,\ldots, \beta_m$ of is self dual with respect to the
Poincaré pairing. For $\beta\in H^\cdot(S, \mathbb{Q})$, denote by $\beta^{\vee}$ its Poincaré dual.
For a partition $\eta$ as in \eqref{def:nu}, let the dual partition $\eta^{\vee}$ be
the cohomologically weighted partition $\{(\eta_1, \beta_{l_1}^{\vee}),\ldots, (\eta_s, \beta_{l_s}^{\vee})\}$, see \cite[Subsection 3.2.2]{mnop2}.

Let $\{C_{\eta}\}_{|\eta|=d}$ be the basis of $H^{\cdot}(\text{Hilb}(S,d),\mathbb{Q})$ introduced in \cite[Section 3.2.2]{mnop2} and constructed from the basis $\beta_1,\ldots, \beta_m$ of $H^\cdot(S, \mathbb{Q})$.
For insertions and descendant levels as above,
the relative PT invariants are defined as follows
\begin{equation}\label{pt33}
\langle \tau_{\kappa_1}(\gamma_1),\cdots, \tau_{\kappa_r}(\gamma_r)|\,\eta\rangle^{\text{PT}}_{\beta,n}:=\int_{[\text{PT}_n(Y/S,\beta)]^{\text{vir}}} \ch_{2+\kappa_1}(\gamma_1)\cdots \ch_{2+\kappa_r}(\gamma_r)\cap \varepsilon^*(C_{\eta}).
\end{equation}
The generating series for the relative PT invariants for fixed class $\beta$, insertions $\gamma_1,\cdots, \gamma_r$, and descendants $\kappa_1,\cdots, \kappa_r$ is defined by the Laurent series in $q$:
\begin{equation}\label{PTrel}
    \text{PT}^{Y/S}_{\beta,\eta}(q; \gamma, \kappa):=\sum_{n\in\mathbb{Z}} \langle \tau_{\kappa_1}(\gamma_1),\cdots, \tau_{\kappa_r}(\gamma_r)|\,\eta\rangle^{\text{PT}}_{\beta,n}\, q^n.
    \end{equation}

\subsection{The degeneration formula for PT invariants}
Let $\mathcal{Y}\to C$ be a smooth fourfold fibered over a smooth curve. 
Let $Y$ be a nonsingular fiber, and assume that there is a singular fiber over $0\in C$ of the form $$\mathcal{Y}_0=Y_1\cup_S Y_2,$$ where $Y_1$ and $Y_2$ are two smooth threefolds intersecting in a smooth surface $S$. Consider the natural inclusions $i:Y\to\mathcal{Y}$, $i_0:\mathcal{Y}_0\to\mathcal{Y}$, $i_1:Y_1\to\mathcal{Y}_0$, $i_2:Y_2\to\mathcal{Y}_0$. Further, consider the morphisms
$$H_2(Y)\xrightarrow{i_*} H_2(\mathcal{Y})\xleftarrow{i_{0*}} H_2(\mathcal{Y}_0)\xleftarrow{i_{1*}+i_{2*}} H_2(Y_1)\oplus H_2(Y_2).$$
The map $i_{0*}$ is an isomorphism. 
Let $\beta\in H_2(Y)$ be a curve class.
We say that $\beta$ splits as $\beta=\beta_1+\beta_2$ for classes $\beta_1\in H_2(Y_1)$, $\beta_2\in H_2(Y_2)$ if 
$$i_*(\beta)=i_{0*}\left(i_{1*}(\beta_1)+i_{2*}(\beta_2)\right).$$
Consider the insertions $\gamma_1,\cdots,\gamma_r\in H^{\cdot}(\mathcal{Y},\mathbb{Z})$ and descendants levels $\kappa_1,\cdots, \kappa_r\geq 0$. We abuse notation and denote the restrictions of the insertions to $Y$, $Y_1$, and $Y_2$ also by $\gamma_1,\cdots, \gamma_r$.
The degeneration formula \cite[Theorem 6.9]{lw}, \cite{mpt} relates the PT invariants of $Y$ to the relative PT invariants of $Y_1/S$ and $Y_2/S$:
$$\text{PT}^Y_{\beta}(q; \gamma, \kappa)=\sum \text{PT}^{Y_1/S}_{\beta_1,\eta}(q; \gamma, \kappa)\,\text{PT}^{Y_2/S}_{\beta_2,\eta^{\vee}}(q; \gamma, \kappa)
\frac{(-1)^{|\eta|-l(\eta)}\xi(\eta)}{q^{|\eta|}},$$
where the sum on the right hand side is taken over all splittings $\beta=\beta_1+\beta_2$ of the curve class and over all cohomologically weighted partitions $\eta$. 

\subsection{Stability conditions}\label{stabi}
Let $Y$ be a smooth projective variety. \textit{A stability condition} on $\text{Coh}_{\leq 1}(Y)$ consists of a slope function \[\mu: \text{Coh}_{\leq 1}(Y)\to S\] where $(S,\leq )$ is a totally ordered set, such that
\begin{enumerate}
    \item for any exact sequence $0\to A\to B\to C\to 0$ of objects in $\text{Coh}_{\leq 1}(Y)$, we have that either
    \begin{align*}
        &\mu(A)<\mu(B)<\mu(C)\text{ or }\\
        &\mu(A)=\mu(B)=\mu(C)\text{ or }\\
        &\mu(A)>\mu(B)>\mu(C).
    \end{align*}
 \item  any sheaf $F\in \text{Coh}_{\leq 1}(Y)$ has a Harder-Narasimhan filtration $$0\subset F_1\subset \cdots\subset F_{n-1}\subset F_n=F$$ such that the factors $\text{gr}^iF=F_i/F_{i-1}$ are semistable and the slopes of the factors satisfy $\mu(\text{gr}^1F)>\cdots>\mu(\text{gr}^nF)$,
\end{enumerate}
 see \cite[Section 3]{bs}, \cite[Section 4]{j4}.
A sheaf $F\in \text{Coh}_{\leq 1}(Y)$ is called \textit{(semi)stable} if for any proper subsheaf $0\neq E\subset F$, we have $\mu(E)(\leq)<\mu(F).$

For $s\in S$ define:
\begin{align*}
    \textbf{T}_s:=\{T\in \text{Coh}_{\leq 1}(Y)|\, T\twoheadrightarrow Q\neq 0,\text{ then }\mu(Q)\geq s\}\\
    \textbf{F}_s:=\{F\in \text{Coh}_{\leq 1}(Y)|\,0\neq S\hookrightarrow F,\text{ then }\mu(S)<s\}.
\end{align*}
The categories $\left(\textbf{T}_s, \textbf{F}_s\right)$ form a torsion pair of $\text{Coh}_{\leq 1}(Y)$. 
The categories $\textbf{T}_s$ and $\textbf{F}_s$ can be also described as follows. For a set $A\subset S$, let $\text{SS}(A)\subset \text{Coh}_{\leq 1}(Y)$ be the subcategory generated by semistable sheaves of slope in $A$. 
For $s\in S$, we have that 
\begin{align*}
    \textbf{T}_s:=\text{SS}(\geq s),\\
    \textbf{F}_s:=\text{SS}(<s).
\end{align*}

\subsection{Moduli stacks and torsion pairs}\label{moduli}
Let $Y$ be a smooth projective variety. Lieblich \cite{li} constructed an Artin stack $\mathfrak{LM}_Y$, locally of finite type, parametrizing gluable complexes $I\in D^b(Y)$, that is, complexes $I$ such that $\text{Ext}^{\leq -1}_Y(I,I)=0$.

Consider the abelian category defined by Toda \cite{t1}:
$$\textbf{A}=\langle \OO_Y[1], \text{Coh}_{\leq 1}(Y)\rangle_{\text{exc}}\subset D^b(Y).$$ 
For a torsion pair $(\textbf{T}, \textbf{F})$ of $\text{Coh}_{\leq 1}(Y)$, we call an object $I\in \textbf{A}$ \textit{a $(\textbf{T}, \textbf{F})$-pair} if it is of the form
$$I\cong \left[\OO_Y\xrightarrow{s} F\right],$$ where $F\in\textbf{F}$ and $\text{coker}(s)\in \textbf{T}$. Further, a torsion pair $(\textbf{T}, \textbf{F})$ of $\text{Coh}_{\leq 1}(Y)$ is called \textit{open} if the categories $\textbf{T}, \textbf{F}\subset \text{Coh}_{\leq 1}(Y)$ are open.

The following result is proved in \cite[Lemma 4.6]{bcr}; it is originally stated for some particular cases of Calabi-Yau orbifolds $\mathcal{X}$, but the proof in loc. cit. works also for smooth projective varieties $Y$.

\begin{lemma}\label{openimp}
Let $(\textbf{T}, \textbf{F})$ be an open torsion pair for $\text{Coh}_{\leq 1}(Y)$ and assume that $\text{Coh}_{ 0}(Y)\subset \textbf{T}$. The substack of $\mathfrak{LM}$ parametrizing $(\textbf{T}, \textbf{F})$-pairs is open.
\end{lemma}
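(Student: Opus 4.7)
The plan is to test openness against an arbitrary morphism $T \to \mathfrak{LM}$ from a scheme $T$, which corresponds to a gluable complex $\mathbb{I} \in D^b(Y \times T)$. I need to show the locus $U \subset T$ over which $\mathbb{I}_t$ is a $(\textbf{T}, \textbf{F})$-pair is open. I would split the $(\textbf{T},\textbf{F})$-pair condition into three subconditions: (a) $\mathbb{I}_t$ is quasi-isomorphic to a two-term complex $[\mathcal{O}_Y \xrightarrow{s_t} F_t]$ with $F_t \in \text{Coh}_{\leq 1}(Y)$; (b) $F_t \in \textbf{F}$; and (c) $\text{coker}(s_t) \in \textbf{T}$. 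It suffices to show each is open.

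For (a), the idea is to combine the gluability condition $\text{Ext}^{<0}(\mathbb{I}_t, \mathbb{I}_t) = 0$ with upper semicontinuity of $\dim \text{Hom}(\mathbb{I}_t, \mathcal{O}_Y[i])$ and cohomology-and-base-change to produce an open locus $U_0 \subset T$ on which $\mathbb{I}|_{Y \times U_0}$ is quasi-isomorphic to a two-term complex $[\mathcal{O}_{Y \times U_0} \xrightarrow{s} \mathbb{F}]$ with $\mathbb{F}$ a coherent sheaf, flat over $U_0$, and fibers in $\text{Coh}_{\leq 1}(Y)$. This step does not depend on the particular torsion pair and is essentially the standard openness argument for PT stable pairs.

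On this $U_0$, step (b) follows immediately from the openness of $\textbf{F}$ in $\text{Coh}_{\leq 1}(Y)$ applied to the flat family $\mathbb{F}$. For step (c), since $\mathbb{F}$ is flat and $\text{coker}$ is right-exact, the coherent sheaf $\text{coker}(s)$ on $Y \times U_0$ has fibers $\text{coker}(s_t)$, so the openness of $\textbf{T}$ applied to this family gives openness of the condition $\text{coker}(s_t) \in \textbf{T}$. Intersecting the loci from (b) and (c) inside $U_0$ produces the desired open $(\textbf{T}, \textbf{F})$-pair locus in $T$. The main obstacle is step (a), which depends crucially on the hypothesis $\text{Coh}_{0}(Y) \subset \textbf{T}$: this forces any $F_t \in \textbf{F}$ to have no zero-dimensional subsheaves, so $s_t$ and its cokernel behave well on fibers and the universal presentation $[\mathcal{O}_{Y \times U_0} \to \mathbb{F}]$ actually exists. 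The argument in \cite[Lemma 4.6]{bcr} carries this out for Calabi-Yau orbifolds, and since the Calabi-Yau hypothesis is never used, it applies verbatim to smooth projective varieties $Y$.
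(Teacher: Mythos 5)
Your proposal takes a genuinely different route from the paper. The paper's proof (citing Beentjes–Calabrese–Rennemo) is built around the derived duality functor $\mathbb{D}(-) := R\mathcal{H}om(-,\omega_Y)[2]$, which induces an automorphism of the Lieblich stack $\mathfrak{LM}$; the key point is that this duality converts the cokernel condition on $h^1(I)=\text{coker}(s)$ into a condition on the dual complex that is accessible through standard semicontinuity. You instead propose a direct decomposition into three fiberwise conditions (a), (b), (c), and do not invoke duality at all.

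The concrete gap is in step (c). After step (a) you have a flat family $\mathbb{F}$ on $Y\times U_0$ and the universal section $s:\mathcal{O}_{Y\times U_0}\to\mathbb{F}$, and you pass to $\mathcal{Q}:=\text{coker}(s)$. It is true (by right-exactness of pullback) that $\mathcal{Q}_t\cong\text{coker}(s_t)$. However $\mathcal{Q}$ need not be flat over $U_0$ — the Euler characteristic $\chi(\mathcal{Q}_t)=\chi(F_t)-\chi(\text{im}(s_t))$ can jump as the image of $s_t$ varies — and ``openness of $\textbf{T}$'' in the sense used in the paper (and in BCR) is a statement about flat families. So you cannot simply apply openness of $\textbf{T}$ to $\mathcal{Q}$. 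For certain concrete torsion pairs (e.g. $\textbf{T}=\text{Coh}_0(Y)$, or the BS torsion pair where membership in $\textbf{T}$ is detected by the support mapping to points of $X$) one can rescue this via upper semicontinuity of fiber dimension of the support, but the Lemma is stated for an arbitrary open torsion pair with $\text{Coh}_0\subset\textbf{T}$, and this is exactly the point where BCR's duality trick is doing real work. Relatedly, your claim that ``the Calabi-Yau hypothesis is never used'' in BCR's Lemma 4.6 and that it ``applies verbatim'' overstates the case: the duality functor involves $\omega_Y$, and since $\mathbb{D}(\mathcal{O}_Y)=\omega_Y\neq\mathcal{O}_Y$ in the non-Calabi–Yau setting, the paper explicitly notes that Lemmas 4.4, 4.5 and Proposition 4.6 of BCR ``can be changed'' rather than translated directly.

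Finally, you locate the main obstacle in step (a), but I would argue the opposite: step (a) is the part that is essentially independent of the torsion pair and follows the standard two-term-complex openness argument, while step (c) — the flatness problem with the cokernel — is the genuinely delicate step that motivates the duality approach. I'd suggest either restricting attention to torsion pairs where the $\textbf{T}$-condition on the cokernel can be checked by semicontinuity of support (and flagging this), or importing the duality argument as the paper does.
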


\begin{proof}
The proof of Lemmas 4.1 and 4.2 in \cite{bcr} translate directly in our case.  
Consider the dual $\mathbb{D}(-):=R\mathcal{H}om(-, \omega_Y)[2]$ of $D^b(Y)$. 
Then $\mathbb{D}\left(\text{Coh}_1(Y)\right)=\text{Coh}_1(Y)$,
$\mathbb{D}\left(\text{Coh}_0(Y)\right)=\text{Coh}_0(Y)[-1]$, 
and $\mathbb{D}(\mathcal{O}_Y)=\omega_Y[2]$. The duality functor $\mathbb{D}$ induces an automorphism of the stack $\mathfrak{LM}$. The proofs of Lemmas 4.4 and 4.5 and Proposition 4.6 in loc. cit can be changed for $Y$ a smooth projective threefold. 
\end{proof}


\subsection{The motivic Hall algebra}\label{moHA}
We recall the construction of Joyce's motivic Hall algebras from \cite{j3}, see also \cite{b}. Let $\mathfrak{M}$ be an Artin stack locally of finite type over $\mathbb{C}$ with affine stabilizers.
\smallbreak

\begin{defn}
 \textit{The relative Grothendieck group over $\mathfrak{M}$}, denoted by $K(\text{St}/\mathfrak{M})$, is the $\mathbb{C}$-vector space of equivalence classes of symbols $\left[T\xrightarrow{f_T} \mathfrak{M}\right]$, where $T$ is a locally finite-type stack with affine stabilizers and $f_T$ a morphism of stacks, modulo the relations:
 
\begin{itemize}
    \item for $T$ and $S$ Artin stacks over $\mathfrak{M}$ with affine stabilizers, we have $$\left[T\cup S\xrightarrow{f_T\cup f_s}\mathfrak{M}\right]=\left[T\xrightarrow{f_T}\mathfrak{M}\right]+\left[S\xrightarrow{f_S}\mathfrak{M}\right],$$
    \item for $f:T\to S$ a map of stacks over $\mathfrak{M}$ with affine stabilizers such that $f:T(\mathbb{C})\to S(\mathbb{C})$ is an equivalence of groupoids, we have $$\left[T\xrightarrow{f_T}\mathfrak{M}\right]=\left[S\xrightarrow{f_S}\mathfrak{M}\right],$$
    \item and for $h_T: T\to U$ and $h_S: S\to U$ Zariski fibrations over $\mathfrak{M}$ of the same dimension, we have $$\left[T\xrightarrow{f_Uh_T}\mathfrak{M}\right]=\left[S\xrightarrow{f_Uh_S}\mathfrak{M}\right].$$
\end{itemize}
\end{defn}

Let $\textbf{A}$ be an abelian category and assume that the moduli stack $\mathfrak{M}$ of objects in $\textbf{A}$ is as above.
Let $\mathfrak{M}^{(2)}$ the stack of short exact sequences with elements in $\textbf{A}$. There are natural maps
\[ \mathfrak{M}\times\mathfrak{M}\xleftarrow{q}\mathfrak{M}^{(2)}\xrightarrow{p} \mathfrak{M},\]
where $p(0\to A\to B\to C\to 0)=(A,C)$ and $q(0\to A\to B\to C\to 0)=B$.

\begin{defn}
The \textit{motivic Hall algebra of $\textbf{A}$} is the $\mathbb{C}$-vector space $K(\text{St}/\mathfrak{M})$ with the product
$$[S\to\mathfrak{M}]*[T\to\mathfrak{M}]:=[E(S,T)\to \mathfrak{M}],$$ where $E(S,T)\to \mathfrak{M}$ is defined by the diagram
\begin{equation*}
\begin{tikzcd}
E(S,T) \arrow[d] \arrow[r] & \mathfrak{M}^{(2)} \arrow[d] \arrow[r] & \mathfrak{M} \\
S\times T \arrow[r] & \mathfrak{M}\times \mathfrak{M},
\end{tikzcd}
\end{equation*}
where the square is cartesian. Further,
$K(\text{St}/\mathfrak{M})$ is an algebra over $K(\text{St}/k)$ by letting $$[S\to k]\times [T\to\mathfrak{M}]:=[S\times T\to \mathfrak{M}].$$ 
\end{defn}

\begin{defn} The \textit{Grothendieck ring of varieties} $K(\text{Var}/\mathbb{C})$ is the $\mathbb{C}$-vector space of equivalence classes of symbols $[X\to \text{Spec}\,\mathbb{C}]$ for $X$ a complex variety with relations
$$[X\to\text{Spec}\,\mathbb{C}]=[U\to\text{Spec}\,\mathbb{C}]+[Z\to\text{Spec}\,\mathbb{C}],$$ where $X$ is a variety over $\mathbb{C}$, $Z\subset X$ is closed, and $U:=X\setminus Z$ is its complement.
\end{defn}

One can similarly define the Grothendieck ring of algebraic spaces $K(\text{Sp}/\mathbb{C})$. There is a natural isomorphism 
\begin{equation}\label{groth}
    K(\text{Var}/\mathbb{C})\cong K(\text{Sp}/\mathbb{C}),
\end{equation} see \cite[Lemma 2.12]{b2}.

\section{The moduli space of BS pairs}\label{fund}

In this Section, we prove Theorem \ref{thm1}. In Subsection \ref{defs}, we introduce the torsion pair $(\textbf{T}, \textbf{F})$ and define Bryan--Steinberg pairs. 
In Subsection \ref{msbs}, we prove that the moduli of BS pairs under the hypothesis of Theorem \ref{thm1} is a proper algebraic space. In Subsection \ref{vfc}, we use the results of Huybrechts--Thomas \cite{ht} to construct a virtual fundamental class for the moduli of BS pairs and define BS invariants.

\subsection{Definition of Bryan-Steinberg pairs}
\label{defs}
Let $Y$ be a smooth threefold with a birational morphism $f:Y\to X$ with $Rf_*\mathcal{O}_Y=\mathcal{O}_X$.
Such a map $f:Y\to X$ determines a torsion pair $(\textbf{T},\textbf{F})$ for $\text{Coh}_{\leq 1}(Y)$ \cite{bs}, defined as follows: $\textbf{T}\subset \text{Coh}_{\leq 1}(Y)$ is the subcategory of sheaves $T$ such that 
\begin{equation}\label{RfCoh}
Rf_*T\in \text{Coh}_{\leq 0}(X)
\end{equation} 
and $\textbf{F}$ is its complement
$$\textbf{F}=\{F\in \text{Coh}_{\leq 1}(Y)|\, \text{Hom}\,(T,F)=0\text{ for any }T\in\textbf{T}\}.$$ 
The statement \eqref{RfCoh} means that $R^{>0}f_*T=0$ and the sheaf $f_*T=R^0f_*T\in \text{Coh}_{\leq 0}(X)$.
A complex $I=\left[\mathcal{O}_Y\xrightarrow{s} F\right]$ is called a \textit{Bryan-Steinberg (BS) pair} if $F\in\textbf{F}$ and $\text{coker}(s)\in\textbf{T}$. 
\begin{prop}
The pair $(\textbf{T},\textbf{F})$ is a torsion pair.
\end{prop}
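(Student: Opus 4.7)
The condition $\operatorname{Hom}(T,F)=0$ for $T\in\mathbf{T}$, $F\in\mathbf{F}$ is immediate from the definition of $\mathbf{F}$ as the right orthogonal. The content is the existence, for any $C\in\operatorname{Coh}_{\leq 1}(Y)$, of a short exact sequence $0\to T\to C\to F\to 0$ with $T\in\mathbf{T}$ and $F\in\mathbf{F}$. The strategy is to produce $T$ as the maximal subsheaf of $C$ lying in $\mathbf{T}$; to make this work I would first verify that $\mathbf{T}$ is closed under quotients and extensions inside $\operatorname{Coh}_{\leq 1}(Y)$, then apply noetherianity, and finally check the universal property of the quotient.

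The core dimension-theoretic input is the following: since $f$ is proper and any $G\in\operatorname{Coh}_{\leq 1}(Y)$ has support of dimension at most one, one has $R^if_*G=0$ for $i\geq 2$, and moreover the support of $R^1f_*G$ is contained in the locus of points $x\in X$ over which the fiber of $f|_{\operatorname{supp}(G)}$ has dimension $\geq 1$; this locus has dimension $\leq \dim\operatorname{supp}(G)-1\leq 0$. So for any $G\in\operatorname{Coh}_{\leq 1}(Y)$ the higher direct image $R^1f_*G$ automatically lies in $\operatorname{Coh}_{\leq 0}(X)$.

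Closure under extensions then follows directly from the long exact sequence of $Rf_*$ applied to $0\to T_1\to T\to T_2\to 0$: vanishing of $R^1f_*T_i$ forces $R^1f_*T=0$, and $f_*T$ is sandwiched between sheaves of zero-dimensional support. Closure under quotients is the slightly more delicate step: given $0\to K\to T\to Q\to 0$ with $T\in\mathbf{T}$, the long exact sequence gives $R^1f_*Q=0$ using $R^2f_*K=0$, while the image of $f_*T\to f_*Q$ has zero-dimensional support and the cokernel injects into $R^1f_*K$, which is zero-dimensional by the observation above. Hence $Rf_*Q\in\operatorname{Coh}_{\leq 0}(X)$, so $Q\in\mathbf{T}$.

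With closure under quotients and extensions in hand, noetherianity of $\operatorname{Coh}_{\leq 1}(Y)$ produces a maximal subsheaf $T\subset C$ with $T\in\mathbf{T}$. It remains to show $F=C/T$ lies in $\mathbf{F}$. If some nonzero $\phi\colon T'\to F$ with $T'\in\mathbf{T}$ existed, then its image $T''\subset F$ would belong to $\mathbf{T}$ (closure under quotients), and the preimage $C'\subset C$ of $T''$ would sit in $0\to T\to C'\to T''\to 0$ with both ends in $\mathbf{T}$, so $C'\in\mathbf{T}$ (closure under extensions). Since $T\subsetneq C'$, this contradicts maximality of $T$. The only step requiring real attention is the stalk-dimension estimate for $R^1f_*$ on $\leq 1$-dimensional sheaves; everything else is formal manipulation of long exact sequences and a standard noetherian maximality argument.
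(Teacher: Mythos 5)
Your proposal is correct and follows essentially the same route as the paper: reduce to showing $\mathbf{T}$ is closed under extensions and quotients (the paper cites \cite[Lemma 10]{bs} for this reduction, where you supply the noetherian maximality argument directly), and the key dimension estimate on $R^1f_*$ — that for $K\in\Coh_{\leq 1}(Y)$ the support of $R^1f_*K$ is zero-dimensional because it lies under the positive-dimensional fibers of $\operatorname{supp}(K)\to X$ — is exactly the paper's argument, just stated once for general $G$ rather than ad hoc for $K$.
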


\begin{proof}
This follows as \cite[Lemma 13]{bs}. By \cite[Lemma 10]{bs}, it suffices to show that $\textbf{T}$ is closed under extension and under taking quotients. $\textbf{T}$ is clearly closed under extensions. Next, let $T\in \textbf{T}$, let $T\twoheadrightarrow V$ with kernel $K$. Then 
\[0\to f_*K\to f_*T\to f_*V\to R^1f_*K\to R^1f_*T\to R^1f_*V\to 0.\]
The sheaf $K$ has support of dimension at most one. If $x\in X$ is in the support of $R^1f_*K$, then there exists a curve $C\subset f^{-1}(x)$ such that $C$ is in the support of $K$. If $R^1f_*K$ has support of dimension at least one, we then obtain that $K$ has support of dimension at least $2$, which is not possible.
Thus $R^1f_*K$ has support of dimension at most zero. This means that $f_*V$ is zero dimensional. Finally, we have 
$R^1f_*T=0$, so $R^1f_*V=0$, and thus $V\in \textbf{T}$.
\end{proof}
Assume from now on that $Y$ is projective.
The torsion pair $(\textbf{T}, \textbf{F})$ can be obtained as a torsion pair for a stability condition $\mu:\text{Coh}_{\leq 1}(Y)\to S$, see Subsection \ref{stabi}. Consider the set $S=(-\infty,\infty]\times(-\infty,\infty]$, ordered lexicographically. Fix $L$ an ample line bundle on $X$, and fix $H>f^*L$ an ample line bundle on $Y$. Consider the slope map
$$\mu:\text{Coh}_{\leq 1}(Y)\to S$$ defined by the formula
$$\mu(F):=\left(\frac{\chi(F)}{\text{supp}(F)\cdot f^*L}, \frac{\chi(F)}{\text{supp}(F)\cdot H}\right),$$ where $\text{supp}(F)\in N_1(Y)$ is the (curve) support of $F$. This slope defines a stability condition on $\text{Coh}_{\leq 1}(Y)$; this is proved as in \cite[Section 3, Lemma 39]{bs}. For $a=(\infty, 0)$, the pair $(\textbf{T}_a, \textbf{F}_a)$ is the torsion pair used in the definition of BS pairs, see the argument in \cite[Lemma 51]{bs}. For $b=(\infty, \infty)$, the pair $(\textbf{T}_b, \textbf{F}_b)$ is the torsion pair used in the definition of PT pairs. 

\subsection{The moduli space of BS pairs}\label{msbs}
Let $I$ be a BS pair as above.
Consider the distinguished triangle
\begin{equation}\label{tri1}
    I\to \mathcal{O}_Y\to F\rightarrow I[1].
\end{equation}
Then $h^0(I)=\mathcal{I}_C$ for a one dimensional subscheme $C\subset Y$. Let $h^1(I)=Q$.
The BS pair $I$ also fits in a distinguished triangle
\begin{equation}\label{tri2}
    \mathcal{I}_C\to I\to Q[-1]\rightarrow \mathcal{I}_C[1].
\end{equation}
Consider the functor $\Phi_{\text{BS}(Y)}:(\text{Schemes}/k)^{\text{op}}\to\text{Sets}$ parametrizing BS pairs 
$$\Phi_{\text{BS}(Y)}(B)=\left\{\left[\OO_{Y\times B}\xrightarrow{s}\mathcal{F}\right]\text{ s.t.}\left[\OO_{Y\times b}\xrightarrow{s} \mathcal{F}|_{Y\times b}\right] \text{ is a BS pair for every }b\in B\right\}/\text{eq.}$$ where two families $\mathcal{F}$ and $\mathcal{F}'$ are equivalent if there exists a line bundle $\mathcal{L}$ on $B$ such that $\mathcal{F}\cong \mathcal{F}'\otimes\pi_2^*\mathcal{L}$.


As explained in the Introduction, the existence of the proper algebraic space $\text{BS}^f_n(Y,\beta)$
follows once we show that the functor $\Phi_{\text{BS}(Y)}$ is bounded, open, separated, complete, and has trivial automorphisms.
We first check that the BS pairs are in the Lieblich stack $\mathfrak{LM}$:

\begin{prop}
Let $I=\left[\OO_Y\xrightarrow{s}F\right]$ be a BS pair. Then $\text{Ext}^{\leq -1}(I,I)=0$.
\end{prop}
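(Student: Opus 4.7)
The plan is to decompose $I$ via its canonical cohomology truncation and reduce every negative $\Ext$ group to $\Ext$ groups between honest coherent sheaves, which then vanish for purely formal reasons. Since $\OO_Y$ is torsion-free and $F\in\textbf{F}\subset\Coh_{\leq 1}(Y)$, the kernel $h^0(I)=\ker(s)$ is an ideal sheaf $\I_C\subset\OO_Y$, while $h^1(I)=Q=\coker(s)$ lies in $\textbf{T}\subset\Coh_{\leq 1}(Y)$ by the BS pair condition; in particular $Q$ is a torsion sheaf of support dimension at most one. The central tool throughout is the truncation triangle \eqref{tri2},
$$\I_C\to I\to Q[-1]\xrightarrow{[1]}.$$

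First I would apply $\Hom(-,I)$ to this triangle. For each $n\leq -1$, the resulting long exact sequence sandwiches $\Ext^n(I,I)$ between $\Ext^{n+1}(Q,I)$ and $\Ext^n(\I_C,I)$, so it suffices to show that these two flanking groups vanish. To control $\Ext^n(\I_C,I)$, I would then apply $\Hom(\I_C,-)$ to the same triangle, sandwiching this group between $\Ext^n(\I_C,\I_C)$ and $\Ext^{n-1}(\I_C,Q)$; both vanish for $n\leq -1$ since $\I_C$ and $Q$ are coherent sheaves and $\Coh(Y)\hookrightarrow D^b(Y)$ admits no negative $\Ext$ groups. For $\Ext^m(Q,I)$ I would apply $\Hom(Q,-)$ to the triangle, sandwiching this group between $\Ext^m(Q,\I_C)$ and $\Ext^{m-1}(Q,Q)$ for all $m\leq 0$; the strictly negative-degree cases are again formal sheaf-sheaf vanishings.

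The only nontrivial case, and the main (if mild) obstacle, is $m=0$: I need $\Hom(Q,\I_C)=0$. This is the only place where the BS pair hypothesis actually enters the argument. Since $Q$ lies in $\textbf{T}\subset\Coh_{\leq 1}(Y)$ it is torsion, whereas $\I_C\subset\OO_Y$ is torsion-free on the smooth threefold $Y$, so there is no nonzero map $Q\to\I_C$. Assembling all these vanishings through the long exact sequence for $\Hom(-,I)$ then yields $\Ext^n(I,I)=0$ for every $n\leq -1$, as claimed.
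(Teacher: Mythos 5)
Your proof is correct. It is essentially the same dévissage-to-sheaves argument as the paper's, but organized around a single triangle: you apply $\Hom(-,I)$, $\Hom(\I_C,-)$, and $\Hom(Q,-)$ three times to the truncation triangle~\eqref{tri2}, $\I_C\to I\to Q[-1]$, whereas the paper first uses the defining triangle~\eqref{tri1}, $I\to\OO_Y\to F$, to reduce to $\Ext^{\leq -1}(I,\OO_Y)$ and $\Ext^{\leq -2}(I,F)$, and only then feeds these into~\eqref{tri2}. In both cases everything collapses to sheaf-sheaf $\Ext$ groups in negative degrees plus exactly one nontrivial $\Hom$-vanishing of a torsion sheaf into a torsion-free one: for you $\Hom(Q,\I_C)=0$, for the paper $\Hom(Q,\OO_Y)=0$. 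One small remark on your closing sentence: what you actually use is only that $Q\in\Coh_{\leq 1}(Y)$, hence torsion on the smooth threefold $Y$ — this already holds for any pair $\left[\OO_Y\to F\right]$ with $F$ of support dimension $\leq 1$, so the specific BS membership $Q\in\textbf{T}$ is not truly where the argument leans; the proposition is really a general fact about pairs with $1$-dimensional target.
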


\begin{proof}
Apply $\text{Hom}(I,-)$ to the triangle (\ref{tri1}) to get a long exact sequence
$$\cdots\to\text{Ext}^{i-1}(I, F)\to \text{Ext}^{i}(I,I)\to \text{Ext}^i(I,\OO_Y)\to\cdots.$$ It thus suffices to show that $\text{Ext}^{\leq -1}(I, \OO_Y)\cong\text{Ext}^{\leq -2}(I, F)=0$. For this, apply $\text{Hom}(-,\OO_Y)$ and $\text{Hom}(-,F)$ to the triangle (\ref{tri2}) to get
\begin{align*}
\cdots&\to\text{Ext}^{i+1}(Q, \OO_Y)\to \text{Ext}^i(I, \OO_Y)\to \text{Ext}^i(\mathcal{I}_C,\OO_Y)\to\cdots\\
\cdots&\to\text{Ext}^{i+1}(Q, F)\to \text{Ext}^i(I, F)\to \text{Ext}^i(\mathcal{I}_C,F)\to\cdots
\end{align*}
The groups $\text{Ext}^{\leq 0}(Q, \OO_Y)$, $\text{Ext}^{\leq -1}(\mathcal{I}_C,\OO_Y)$, $\text{Ext}^{\leq 0}(Q, F)$, $\text{Ext}^{\leq -1}(\mathcal{I}_C,F)$ vanish and thus the conclusion follows.
\end{proof}

We next check that BS pairs have trivial automorphisms:

\begin{prop}\label{aut}
Consider two BS pairs $I=[\OO_Y\to F], J=[\OO_Y\to G]\in D^b(Y).$ 
The natural map $\text{Hom}(J,I)\to \text{Hom}(\OO_Y,\OO_Y)\cong \C$ is injective. In particular, morphisms in $\text{Hom}(I,I)$ are given by scalar multiplications.
\end{prop}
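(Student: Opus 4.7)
The plan is to take $\phi \in \Hom(J,I)$ in the kernel of the natural map and show $\phi = 0$. First I make the natural map explicit: applying $\Hom(-,\OO_Y)$ to the triangle $J \to \OO_Y \to G \xrightarrow{[1]}$, the groups $\Hom(G,\OO_Y)$ and $\Ext^1(G,\OO_Y)$ vanish because $G$ has support of dimension at most one on the smooth threefold $Y$, so
\[ \Hom(J,\OO_Y) \isom \Hom(\OO_Y,\OO_Y) \isom \C, \]
generated by the canonical map $J \to \OO_Y$. The composition $J \xrightarrow{\phi} I \to \OO_Y$ is therefore a unique scalar $\lambda$ times this canonical map, and the natural map sends $\phi \mapsto \lambda$. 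By assumption $\lambda = 0$.

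Second, applying $\Hom(J,-)$ to the triangle $F[-1] \to I \to \OO_Y \xrightarrow{[1]}$ yields
\[ \Hom(J,F[-1]) \lra \Hom(J,I) \lra \Hom(J,\OO_Y), \]
and since $\phi$ has zero image on the right, $\phi$ lifts to some element of $\Hom(J,F[-1])$. The proof thus reduces to showing $\Hom(J,F[-1]) = 0$.

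This last vanishing is the main step. Using the canonical truncation triangle $\I_D \to J \to Q_J[-1] \xrightarrow{[1]}$ of $J$, where $\I_D = h^0(J)$ and $Q_J = h^1(J) = \coker(t_J)$, and applying $\Hom(-,F[-1])$ sandwiches $\Hom(J,F[-1])$ in an exact sequence between $\Hom(Q_J,F)$ and $\Ext^{-1}(\I_D,F)$. The right-hand term vanishes because $\I_D$ and $F$ are ordinary sheaves. The left-hand term---the essential use of the BS hypothesis---vanishes by the torsion pair property: $Q_J = \coker(t_J) \in \textbf{T}$ by the BS condition on $J$, and $F \in \textbf{F}$ by the BS condition on $I$, so $\Hom(Q_J,F) = 0$. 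Thus $\Hom(J,F[-1]) = 0$ and $\phi = 0$. Specializing to $J = I$ gives the injection $\Hom(I,I) \hookrightarrow \C$, and together with $\id_I \mapsto 1$ this forces $\Hom(I,I) = \C \cdot \id_I$, yielding the final sentence of the proposition.
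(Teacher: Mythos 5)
Your proof is correct and follows essentially the same route as the paper's: identify $\Hom(J,\OO_Y)\cong\C$ via the vanishing of $\Hom(G,\OO_Y)$ and $\Ext^1(G,\OO_Y)$, reduce injectivity to $\Ext^{-1}(J,F)=0$ (your $\Hom(J,F[-1])$), and kill that group via the truncation triangle of $J$ together with the torsion-pair orthogonality $\Hom(Q_J,F)=0$. The only difference is cosmetic — you phrase the argument in terms of a kernel element $\phi$ rather than exactness of the relevant sequences — but the decomposition, the vanishing lemmas, and the use of the BS hypothesis are identical.
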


\begin{proof}
Consider the long exact sequence
$$\cdots\to \text{Hom}(G, \mathcal{O}_Y)\to \text{Hom}(\mathcal{O}_Y, \mathcal{O}_Y)\to \text{Hom}(J, \mathcal{O}_Y)\to \text{Ext}^1(G, \mathcal{O}_Y)\to\cdots$$ obtained by applying $\text{Hom}(-,\OO_Y)$ to the triangle \eqref{tri1}.  The sheaf $G$ has support of codimension at least $2$ in $Y$,
so $\text{Hom}(G, \mathcal{O}_Y)=\text{Ext}^1(G, \mathcal{O}_Y)=0$.
There is thus an isomorphism $$\C\cong\text{Hom}(\mathcal{O}_Y, \mathcal{O}_Y)\cong \text{Hom}(J, \mathcal{O}_Y).$$
Further, consider the exact sequence
$$\cdots\to\text{Ext}^{-1}(J,F)\to \text{Hom}(J,I)\to \text{Hom}(J,\mathcal{O}_Y)\to\cdots$$ obtained by applying $\text{Hom}(J,-)$ to the triangle \eqref{tri1}.
It is enough to show that $\text{Ext}^{-1}(J,F)=0$. 
There is a long exact sequence
 $$\cdots\to\text{Hom}(K,F)\to \text{Ext}^{-1}(J,F)\to \text{Ext}^{-1}(\mathcal{I}_D,F)\cong 0$$
 obtained by applying $\text{Hom}(-,F)$ to the distinguished triangle \eqref{tri2} for the pair $J$ $$\mathcal{I}_D\to J\to K[-1]\rightarrow \mathcal{I}_D[1].$$
 The first term is zero because $K\in \textbf{T}$ and $F\in\textbf{F}$, so we obtain that $\text{Ext}^{-1}(J,F)=0$.
\end{proof}


\begin{prop}\label{open}
The locus of BS pairs inside $\mathfrak{LM}$ is open. In particular,
the functor $\Phi_{\text{BS}(Y)}$ is open.
\end{prop}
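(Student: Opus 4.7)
The plan is to apply Lemma~\ref{openimp} to the torsion pair $(\textbf{T}, \textbf{F})$ defined at the start of Subsection~\ref{defs}. That lemma will give openness of the locus of BS pairs inside $\mathfrak{LM}$. Since the moduli of pairs $[\OO_Y \to F]$ maps naturally to $\mathfrak{LM}$ by sending a pair to its associated two-term complex, openness of the functor $\Phi_{\text{BS}(Y)}$ will then follow by pulling back the open substack along this map.

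Two hypotheses must be checked. First, that $\text{Coh}_0(Y) \subset \textbf{T}$: for $T \in \text{Coh}_0(Y)$, the sheaf $T$ is supported on finitely many points, so $f_*T$ has zero-dimensional support on $X$ while $R^if_*T = 0$ for $i>0$, hence $Rf_*T \in \text{Coh}_{\leq 0}(X)$ and $T \in \textbf{T}$. Second, that $\textbf{T}$ and $\textbf{F}$ are open subcategories of $\text{Coh}_{\leq 1}(Y)$, meaning that in any flat family of sheaves in $\text{Coh}_{\leq 1}(Y)$ over a base $B$, the loci of fibers lying in $\textbf{T}$ and in $\textbf{F}$ are open. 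For openness of $\textbf{T}$, given a flat family $\mathcal{T}$ over $B$, I would apply upper semicontinuity of support dimension to the relative pushforward $R(f \times \id_B)_* \mathcal{T}$: combined with cohomology and base change, this shows that the condition that each $R^i f_* \mathcal{T}_b$ has zero-dimensional support is open in $b$.

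For openness of $\textbf{F}$, I would invoke the stability condition $\mu : \text{Coh}_{\leq 1}(Y) \to S$ constructed later in Subsection~\ref{defs}, under which $\textbf{F} = \text{SS}(<a)$ for $a = (\infty, 0)$. A sheaf $F$ then lies in $\textbf{F}$ iff the maximal HN factor of $F$ has slope strictly less than $a$, and by existence of relative HN filtrations together with upper semicontinuity of the maximal HN slope, this is an open condition in families, exactly as in \cite{bs}. The main obstacle is the openness of $\textbf{F}$, since it is characterized by the negative condition of admitting no nonzero subsheaf in $\textbf{T}$ and one must control all such potential destabilizers simultaneously; rephrasing the condition in terms of the stability function $\mu$ converts this into the standard semicontinuity statement for maximal slope, which is where I would rely on the Bryan--Steinberg argument. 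Once both openness statements are in hand, Lemma~\ref{openimp} applies and completes the proof.
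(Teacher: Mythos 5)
Your proposal is correct and follows essentially the same route as the paper: invoke Lemma~\ref{openimp} and reduce to openness of $\textbf{T}$ and $\textbf{F}$ inside $\text{Coh}_{\leq 1}(Y)$, which the paper establishes (for both categories at once) by identifying them with $\textbf{T}_a = \text{SS}(\geq a)$ and $\textbf{F}_a = \text{SS}(<a)$ for the stability condition $\mu$ and citing the Huybrechts--Lehn argument for openness of semistability. The only deviation is that you give a direct relative-pushforward semicontinuity argument for $\textbf{T}$ instead of running it through the stability condition as the paper does; that is a harmless variation, though it carries some base-change subtleties which the stability-condition route avoids.
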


\begin{proof}
By Lemma \ref{openimp}, the statement follows once we show that 
the categories $\textbf{T}, \textbf{F}\subset \text{Coh}_{\leq 1}(Y)$ are open. This follows as openness of semistable sheaves \cite[Proposition 2.3.1]{hl2}. 
\end{proof}

Recall the definition of boundedness from \cite[Definition 41]{bs}. The following is proved in the same way as \cite[Lemma 46]{bs}:

\begin{prop}
The family of BS pairs $\left[\mathcal{O}_Y\rightarrow F\right]$ for $[F]=(\beta, n)\in N_{\leq 1}(Y)$ is bounded.
\end{prop}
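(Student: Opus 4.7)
The plan is to follow the argument of \cite[Lemma 46]{bs} verbatim, since the key observations are purely homological and do not use the Calabi-Yau hypothesis. First, I would reduce the boundedness of BS pairs to the boundedness of the sheaves $F$ appearing in them: a BS pair $I=[\mathcal{O}_Y\xrightarrow{s}F]$ is determined up to equivalence by $F$ together with the section $s\in H^0(Y,F)$, and once $F$ lies in a bounded family, $h^0(Y,F)$ is uniformly bounded, so the pairs $(F,s)$ form a bounded family as well.

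Next I would reduce the boundedness of the family of $F\in\textbf{F}$ with $\operatorname{ch}(F)=(\beta,n)$ to boundedness of semistable sheaves with fixed Chern character on $Y$. Since $\operatorname{Coh}_0(Y)\subset\textbf{T}$, every $F\in\textbf{F}$ is pure one-dimensional. More is true: under the stability condition $\mu$ introduced in Subsection \ref{defs}, we have $\textbf{F}=\textbf{F}_a=\operatorname{SS}(<a)$ for $a=(\infty,0)$, so every $F\in\textbf{F}$ admits a Harder--Narasimhan filtration whose factors are $\mu$-semistable with slope strictly less than $a$. I would then observe that for a fixed total Chern character $(\beta,n)$, only finitely many Harder--Narasimhan types are possible: the support classes of the factors form a finite subset of $N_1(Y)$ because each support is effective and their sum is $\beta$, and once the supports are fixed, the Euler characteristics of the factors are constrained between an explicit lower bound (coming from the HN inequalities $\mu(\operatorname{gr}^i)<a$) and the requirement that they sum to $n$.

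Finally, for each of the finitely many HN types, I would invoke the boundedness of the family of $\mu$-semistable sheaves of fixed support and Euler characteristic (this is the one-dimensional case of Simpson's boundedness theorem, see \cite[Theorem 3.3.7]{hl2}, applied with respect to the ample class $H$). Since an extension of sheaves lying in bounded families is again bounded, we conclude that the sheaves $F\in\textbf{F}$ of class $(\beta,n)$ form a bounded family, completing the argument.

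The main obstacle is really organizational rather than substantive: one must check that the slope function $\mu$, which takes values in the lexicographically ordered $(-\infty,\infty]\times(-\infty,\infty]$, still yields only finitely many HN types after fixing $(\beta,n)$. This is handled exactly as in \cite[Section 3]{bs} by separately analyzing the two components of $\mu$ and noting that boundedness on each coordinate forces finiteness. Aside from this bookkeeping, no new input is required beyond what is already available once the stability condition $\mu$ is in place.
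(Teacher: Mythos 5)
Your reduction in the second step has a genuine gap: the claim that the sheaves $F\in\textbf{F}$ with fixed Chern character $(\beta,n)$ form a bounded family is false, so the argument never gets off the ground. This already fails in the PT case $f=\id$, where $\textbf{T}=\Coh_0(Y)$ and $\textbf{F}$ is the category of pure one-dimensional sheaves: for two disjoint lines $L_1,L_2\subset\mathbb{P}^3$ the sheaves $F_k=\OO_{L_1}(k)\oplus\OO_{L_2}(-k)$ are all pure with $\ch(F_k)=(2\ell,2)$ independent of $k\geq 1$, but $\{F_k\}_{k\geq 1}$ is unbounded. The same example shows where the finiteness-of-HN-types step breaks: the HN factors of $F_k$ have classes $(\ell,k+1)$ and $(\ell,1-k)$, so for fixed total class $(\beta,n)$ the Euler characteristics of the factors are \emph{not} bounded. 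The condition $\mu(\text{gr}^i)<a$ yields only an upper bound on slopes (and for non-exceptional factors it is vacuous, since their slope is automatically $<(\infty,0)$); it gives no lower bound, so the claim that boundedness on each coordinate forces finiteness of HN types does not hold.

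The missing ingredient is precisely the constraint $\coker(s)\in\textbf{T}$, which is part of the definition of a BS pair but which your reduction discards. In the example above, no section $\OO_Y\to F_k$ with zero-dimensional cokernel exists for $k\geq 1$: since $H^0(\OO_{L_2}(-k))=0$, any section factors through $\OO_{L_1}(k)$, and the cokernel then contains the one-dimensional sheaf $\OO_{L_2}(-k)$. The argument of \cite[Lemma 46]{bs}, which the paper follows, uses the full pair structure: a BS pair fits in a short exact sequence $0\to\OO_C\to F\to Q\to 0$ where $\OO_C=\im(s)$ is the structure sheaf of a one-dimensional subscheme of bounded class (hence bounded via the Hilbert scheme), and $Q=\coker(s)\in\textbf{T}$ has bounded class and is bounded because $Rf_*Q$ is zero-dimensional; one then concludes by boundedness of extensions. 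The HN/Simpson machinery you invoke can serve as a supporting tool, but only after the cokernel condition has been used to constrain the possible HN types — which your proposal never does.
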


Next, we discuss that the functor $\Phi_{\text{BS}(Y)}$ is separated and complete. The proof is similar to Langton's proof that the moduli of Gieseker semistable sheaves is separated and proper, see \cite[Appendix 2.B]{hl2}, \cite{l}.
Let $R$ be a DVR with fraction field $K$, residue field $k$, and uniformizer $\pi$. Let $\mathcal{Y}:=Y\times R$, then $\mathcal{Y}_K=Y\times K$.

\begin{prop}\label{sep}
Consider a BS family $I=\left[\OO_{\mathcal{Y}_ K}\rightarrow F\right]$ over $K$. There exists at most one $R$-flat BS family $\I=\left[\OO_{\mathcal{Y}}\rightarrow \F\right]$ such that $\I_K\cong I$.
\end{prop}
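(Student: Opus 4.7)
The plan is to adapt Langton's separatedness argument for semistable sheaves \cite{l} (see also \cite[Appendix 2.B]{hl2}) to BS pairs. Suppose $\mathcal{I}, \mathcal{I}'$ are two $R$-flat extensions restricting to $I$ over $K$. By Proposition \ref{aut}, once identifications $\mathcal{I}_K \cong I \cong \mathcal{I}'_K$ are fixed the generic isomorphism $\phi_K : \mathcal{I}_K \to \mathcal{I}'_K$ is uniquely determined. Through the defining triangle $\mathcal{I} \to \mathcal{O}_\mathcal{Y} \to \mathcal{F}$, specifying an $R$-flat BS extension is the same as specifying an $R$-flat coherent sheaf $\mathcal{F}$ on $\mathcal{Y}$ together with a section $s: \mathcal{O}_\mathcal{Y} \to \mathcal{F}$ whose central fiber is a BS pair. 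I use $\phi_K$ to identify $\mathcal{F}_K$ with $\mathcal{F}'_K$ compatibly with the sections, and view both $\mathcal{F}$ and $\mathcal{F}'$ as $R$-flat sub-$\mathcal{O}_\mathcal{Y}$-modules of a common $K$-sheaf, sharing one section $s = s'$. The goal reduces to showing $\mathcal{F} = \mathcal{F}'$.

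Assume for contradiction that $\mathcal{F} \neq \mathcal{F}'$. Form $\mathcal{G} := \mathcal{F} + \mathcal{F}'$ and $\mathcal{H} := \mathcal{F} \cap \mathcal{F}'$ inside $\mathcal{F}_K$; both are $\pi$-torsion free, hence $R$-flat, and they sit in a short exact sequence
$$0 \to \mathcal{H} \to \mathcal{F} \oplus \mathcal{F}' \to \mathcal{G} \to 0$$
of $R$-flat sheaves that remains exact after restriction to the central fiber $Y$. The Langton modification procedure replaces $\mathcal{F}'$ by a new $R$-flat extension $\mathcal{F}'' \subset \mathcal{G}$ whose central fiber $\mathcal{F}''_0$ is strictly closer to $\mathcal{F}_0$ than $\mathcal{F}'_0$ is, measured by a discrete invariant such as the length of $\mathcal{G}_0/(\mathcal{F}_0 \cap \mathcal{F}''_0)$. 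Iterating and using Noetherianity forces the procedure to terminate, eventually yielding $\mathcal{F}'' = \mathcal{F}$, the desired contradiction, provided that every intermediate $\mathcal{F}''$ remains the sheaf part of a BS pair.

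The main obstacle is precisely this last proviso: showing that the Langton modifications preserve the BS condition. The two key inputs are openness of the torsion pair $(\textbf{T}, \textbf{F})$ from Proposition \ref{open}, together with the general facts that $\textbf{F}$ is closed under subobjects (by definition as the orthogonal of $\textbf{T}$) while $\textbf{T}$ is closed under extensions and quotients. Concretely, one argues that if $\mathcal{F}''_0$ acquired a nonzero $\textbf{T}$-subsheaf, it would lift to a sub of $\mathcal{F}''$ in $\textbf{T}$ fiberwise, which in turn would restrict to a nonzero $\textbf{T}$-subsheaf of $\mathcal{F}''_K = F$, contradicting $F \in \textbf{F}$; a parallel analysis shows the cokernel of the section remains in $\textbf{T}$. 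With this preservation in hand, the Langton iteration terminates and produces $\mathcal{F} = \mathcal{F}'$, giving $\mathcal{I} \cong \mathcal{I}'$ as required.
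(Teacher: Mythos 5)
Your proposal reaches for the right reference but grabs the wrong tool from it. Langton's work gives two separate things: a \emph{modification procedure} (used in the paper for \emph{completeness}, Proposition~\ref{com}) and a \emph{rigidity argument} via Hom-groups (used for separatedness). You reduce to comparing two flat sheaves $\mathcal{F}, \mathcal{F}' \subset \mathcal{F}_K$ with compatible sections and then invoke ``the Langton modification procedure'' to replace $\mathcal{F}'$ by something ``strictly closer to $\mathcal{F}_0$''. But this is not a defined procedure: Langton's iteration produces a single flat extension whose central fiber becomes semistable, starting from a possibly-bad extension; it is not a mechanism for dragging one good extension onto another. You never say what $\mathcal{F}''$ is, why it stays BS, why the chosen invariant strictly decreases, or why the process must land on $\mathcal{F}$ rather than on some third extension. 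The sketch ``a nonzero $\textbf{T}$-subsheaf of $\mathcal{F}''_0$ would lift to a sub of $\mathcal{F}''$ in $\textbf{T}$ fiberwise, hence restrict to a $\textbf{T}$-subsheaf of $F$'' is also unjustified: subobjects of the central fiber do not automatically spread out to $R$-flat subfamilies, and even when they do, membership in $\textbf{T}$ is not preserved under generization --- that is exactly the difficulty that the completeness proof has to fight through.

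The paper's proof is genuinely different and shorter: it never forms $\mathcal{F}+\mathcal{F}'$ or $\mathcal{F}\cap\mathcal{F}'$ at all. It uses a relative version of Proposition~\ref{aut} to see that $\operatorname{Hom}_{\mathcal{Y}}(\mathcal{I}^1,\mathcal{I}^2)$ embeds in $R$, observes this $R$-module is nonzero and hence isomorphic to $R$ (it is rank one torsion free), obtains morphisms $\alpha:\mathcal{I}^1\to\mathcal{I}^2$ and $\beta:\mathcal{I}^2\to\mathcal{I}^1$ corresponding to $1\in R$, and shows $\beta\alpha$ restricts to an isomorphism on the special fiber, forcing $\mathcal{I}^1_k\cong\mathcal{I}^2_k$ because both have the same Hilbert polynomial. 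You cite Proposition~\ref{aut} but only to pin down the generic isomorphism; you do not exploit the crucial consequence that \emph{any} nonzero map between BS pairs of the same numerical class is an isomorphism. Reorganizing your argument around that rigidity statement, rather than around a modification iteration, is what would make it succeed.
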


\begin{proof}
Consider two $R$-flat BS families
\begin{align*}
    \I^1&=\left[\mathcal{O}_{\mathcal{Y}}\to\mathcal{F}^1\right],\\
    \I^2&=\left[\mathcal{O}_{\mathcal{Y}}\to \mathcal{F}^2\right]
\end{align*} such that $\I^1_K\cong \I^2_K\cong I$. The $R$-module
$\text{Hom}_{\mathcal{Y}}\left(\I^1, \I^2\right)$ has $\text{Hom}_{\mathcal{Y}}\left(\I^1, \I^2\right)\otimes_R K\cong \text{Hom}_{\mathcal{Y}_K}\left(\I^1_K, \I^2_K\right)$.
A relative version over $R$ of Proposition \ref{aut} shows that
$$\text{Hom}_{\mathcal{Y}}\left(\I^1, \I^2\right)\hookrightarrow R.$$ The left hand side above $\text{Hom}_{\mathcal{Y}}\left(\I^1, \I^2\right)$ is an $R$-module.
The isomorphism $\I^1_K\cong \I^2_K$ is a section of $\text{Hom}_{\mathcal{Y}_K}\left(\I^1_K, \I^2_K\right)\cong \text{Hom}_{\mathcal{Y}}\left(\I^1, \I^2\right)_K$, so there exists a global $R$-section of $\text{Hom}_{\mathcal{Y}}\left(\I^1, \I^2\right)$, that is, there exists a morphism $\I^1\to \I^2$ which extends the isomorphism over $K$. In particular, this means that
\begin{equation}\label{iso1}
    \text{Hom}_{\mathcal{Y}}\left(\I^1, \I^2\right)\cong R.
    \end{equation}
Consider the morphisms \begin{align*}
    \alpha&: \I^1\to \I^2,\\
    \beta&: \I^2\to\I^1
\end{align*}
corresponding to $1\in R\cong \text{Hom}_{\mathcal{Y}}\left(\I^1, \I^2\right)$ and $1\in R\cong\text{Hom}_{\mathcal{Y}}\left(\I^1, \I^2\right)$, respectively. 
Their composite $\varphi=\beta\alpha: \I^1\to \I^1$ restricts to an isomorphism $\I^1_K\cong \I^1_K\cong I$. By the isomorphism \eqref{iso1} where both complexes are $\mathcal{I}^1$, there is a natural isomorphism
$\text{Hom}_{\mathcal{Y}}\left(\I^1,\I^1\right)\cong R.$
Consider the diagram
\begin{equation*}
\begin{tikzcd}
\text{Hom}_{\mathcal{Y}}\left(\I^1,\I^2\right) \times \text{Hom}_{\mathcal{Y}}\left(\I^2,\I^1\right)\arrow[d]\arrow[r] & \text{Hom}_{\mathcal{Y}}\left(\I^1,\I^1\right) \arrow[d] \\
R\times R \arrow[r] & R.
\end{tikzcd}
\end{equation*}
The map $\varphi$ corresponds to $1\in R$,
so the composition 
\[\varphi_k:\I^1_k\to \I^2_k\to \I^1_k\] is an isomorphism. Both $\I^1$ and $\I^2$ have the same Hilbert polynomial, so they are isomorphic.
\end{proof}

\begin{prop}\label{com}
Let $I=\left[\mathcal{O}_{\mathcal{Y}_K}\to F\right]$ be a BS pair. There exists a flat BS family $\I=\left[\OO_{\mathcal{Y}}\to\F\right]$ over $R$ such that $\I_K\cong I$.
\end{prop}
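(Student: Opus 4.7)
The proof will mimic Langton's argument \cite{l}, \cite[Appendix 2.B]{hl2}, adapting the valuative criterion for Gieseker-semistable sheaves to BS pairs. The idea is to first extend $I$ to some $R$-flat family of pairs on $\mathcal{Y}$ and then iteratively apply elementary modifications to correct the special fiber when it fails the BS condition.

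\textbf{Step 1 (Initial flat extension).} Extend $F_K$ to an $R$-flat coherent sheaf $\F_0$ on $\mathcal{Y}$ using that the stack of coherent sheaves is locally of finite type (and using the bound on $\chi(F)$ coming from $[F] = (\beta,n)$). Extend $s_K$ to a section $s_0: \OO_{\mathcal{Y}} \to \F_0$ by clearing denominators by the minimal power of $\pi$ so that $s_{0,k} \neq 0$. This produces a flat family of pairs $\I_0 = [\OO_{\mathcal{Y}} \xrightarrow{s_0} \F_0]$ over $R$ with $\I_0 \otimes_R K \cong I$, but whose special fiber $\I_{0,k}$ need not be a BS pair.

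\textbf{Step 2 (Elementary modification).} If $\I_{0,k}$ fails the BS condition, then either the maximal $\textbf{T}$-subsheaf $T \subseteq \F_{0,k}$ for the torsion pair $(\textbf{T}, \textbf{F})$ is nonzero, or $\coker(s_{0,k})$ admits a nonzero quotient in $\textbf{F}$. In the first case, form the Langton modification $\F_1 := \ker(\F_0 \twoheadrightarrow \F_{0,k}/T)$; this is $R$-flat with $\F_1 \otimes_R K \cong F_K$, and its special fiber fits into a short exact sequence $0 \to \F_{0,k}/T \to \F_{1,k} \to T \to 0$, so the ``bad'' $\textbf{T}$-subsheaf has been moved to a $\textbf{T}$-quotient. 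The section $s_0$ is propagated to a section $s_1$ of $\F_1$ by performing the modification at the level of the pair $\I_0$ inside Toda's heart $\textbf{A}$ from Subsection \ref{moduli}, which keeps the section on equal footing with the sheaf. The dual case—when $\coker(s_{0,k})$ has a nontrivial $\textbf{F}$-quotient—is handled by a dual modification applied to $\coker(s_k)$.

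\textbf{Step 3 (Termination).} The main obstacle is to show that this iterative process terminates in finitely many steps. Following Langton, to each $\I_n$ we attach a numerical invariant—the reduced Hilbert polynomial (with respect to the stability condition $\mu$ from Subsection \ref{defs}) of the maximal destabilizing factor on the special fiber. Openness of the torsion pair $(\textbf{T},\textbf{F})$ (Proposition \ref{open}) together with Noetherianity of $\Coh_{\leq 1}(\mathcal{Y}_k)$ ensures that this invariant strictly decreases in a well-founded poset under each modification, so the procedure stabilizes after finitely many iterations. The stationary family $\I = [\OO_{\mathcal{Y}} \xrightarrow{s} \F]$ then satisfies both BS conditions on the special fiber and has $\I_K \cong I$, as required.
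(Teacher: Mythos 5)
Your proposal correctly identifies Langton's elementary-modification strategy, which is indeed what the paper uses. But the central difficulty of such an argument is termination, and this is exactly where your sketch has a genuine gap.

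In Step 3 you assert that the reduced Hilbert polynomial of the maximal destabilizing factor strictly decreases, and that ``openness of the torsion pair together with Noetherianity'' forces stabilization. Neither claim is justified, and neither is the mechanism the paper actually uses. Two specific problems: (i) after a Langton modification $\F_1 = \ker(\F_0\to\F_{0,k}/T)$, the maximal $\textbf{T}$-subsheaf of $\F_{1,k}$ can very well be \emph{larger}; the new fiber sits in an extension $0\to\F_{0,k}/T\to\F_{1,k}\to T\to 0$, and nothing immediate prevents the torsion part from growing, so there is no obvious monotone quantity; (ii) the slope $\mu$ is valued in $(-\infty,\infty]\times(-\infty,\infty]$ with the BS torsion pair cut at the ``degenerate'' point $a=(\infty,0)$, so the semicontinuity arguments underlying Langton's original termination (which rely on comparing reduced Hilbert polynomials at a fixed finite slope) do not carry over verbatim. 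The paper circumvents all this with a device that is absent from your sketch: it pushes the iterated torsion subsheaves $Q_n$ forward to $X$, observes that $f_*Q_n$ are zero-dimensional sheaves on $X$ forming a descending chain that must stabilize, then bootstraps (via the vanishing $Rf_*M=0$ and $\Hom(\textbf{T},\textbf{F})=0$) to stabilization of $Q_n$ itself, then passes to a flat limit in a Quot scheme and derives a slope contradiction from $Q_K\in\textbf{F}$, $Q_0\in\textbf{T}$. Without some such mechanism there is no termination proof.

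A secondary issue: you treat the two BS conditions ($\F_k\in\textbf{F}$ and $\coker(s_k)\in\textbf{T}$) as symmetric dual modifications done interleaved ``inside Toda's heart.'' The paper instead handles them asymmetrically and in sequence --- first arranges $\HH_k\in\textbf{F}$ (Steps 1--3, where the section need not be tracked carefully because the modification divides out by a quotient $B\in\textbf{F}$ of $\HH_k$), then arranges $\coker(s)\in\textbf{T}$ (Step 4, where the modification $\HH^1=\ker(\HH\to P_0)$ with $P_0$ a quotient of $\coker(s_k)$ automatically contains $\operatorname{im}(s)$), and finally re-verifies that the $\textbf{F}$-condition on the fiber is preserved (Step 5). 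Your appeal to Toda's heart $\textbf{A}$ is suggestive but does not by itself explain why the section survives a general sheaf-level modification, and you would still need to redo the termination argument at the pair level. The proposal therefore sketches the right genre of argument but is missing the key idea that makes it work.
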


\begin{proof}

Let $\HH$ be a $R$-flat extension of $F$. Then the $R$-module $\text{Hom}_{\mathcal{Y}}\left(\OO_{\mathcal{Y}}, \HH\right)$ has a section over $K$, so it has a non-zero global section
$$\OO_{\mathcal{Y}}\xrightarrow{s} \HH.$$ All subsheaves of $\HH$ are flat over $R$.
\\

\textbf{Step 1.} We first show that there exists a subsheaf $\HH'\subset \HH$ such that $\HH'_k\in\textbf{F}$. Assume that this not the case.
In particular, $\HH_k$ is not in $\textbf{F}$. Let $P_0\in\textbf{F}$ and $Q_0\in\textbf{T}$ be such that $$0\to Q_0\to \mathcal{H}_k\to P_0\to 0.$$ The sheaf $\HH_k$ is not in $\textbf{F}$, so $Q_0\neq 0$. 
Let $\mathcal{H}^1\subset \HH$ be the kernel of the map $\HH\to\HH/\pi\cong\HH_k\to P_0$, so there is a short exact sequence
$$0\to \mathcal{H}^1\to\mathcal{H}\to P_0\to 0.$$
The restriction of the above sequence to the central fiber gives an exact sequence:
\begin{equation}\label{tor}
    0\to P_0\to \HH^1_k\to\HH_k\to P_0\to 0.
\end{equation} We thus have a short exact sequence
$$0\to P_0\to\HH^1_k\to Q_0\to 0.$$
By our assumption, the sheaf $\HH^1_k$ is not in $\textbf{F}$. Thus, there exists a short exact sequence with $Q_1\in\textbf{T}$ and $P_1\in\textbf{F}$:
$$0\to Q_1\to \mathcal{H}^1_{k}\to P_1\to 0.$$
Consider the pushforward of both these sequences to $X$:
\begin{align*}
    0&\to f_*P_0\to f_*\HH^1_k\to f_*Q_0,\\
    0&\to f_*Q_1\to f_*\mathcal{H}^1_{k}\to f_*P_1\to 0.
\end{align*} 
Let $L:=\text{ker}\left(f_*Q_1\to f_*\mathcal{H}^1_{k}\to f_*Q_0\right).$ By diagram chasing, we see that $L\subset f_*P_0$, so we obtain a map $f^*L\to P_0$. The sheaf $f^*L$ is in $\textbf{T}$ because $Rf_*f^*L=L$, which means that $L=0$ and thus $f_*Q_1\subset f_*Q_0$. 
We repeat the procedure above to define $\HH^n\subset \HH^{n-1}$ for every $n\geq 1$, and the above arguments implies that we have a descending sequence of torsion sheaves on $X$:
$$f_*Q_n\subset f_*Q_{n-1}.$$ 
There exists $n_0\geq 0$ such that the above inclusions are equalities for $n\geq n_0$.
Let $n\geq n_0$, and define $$M:=\text{ker}\left(Q_{n+1}\to \HH^{n+1}_k\to Q_{n}\right).$$ 
We have $f_*Q_{n+1}\cong f_*Q_n$ and $R^1f_*Q_{n+1}= R^1f_*Q_n=0$. This implies that $$Rf_*M=0,$$ and thus that $M\in\textbf{T}$.
A diagram chasing implies that $M\subset P_{n}$, which is possible only if $M=0$. Thus $$Q_n\subset Q_{n-1}$$ for $n\geq n_0$. There exists $n_1\geq 0$ such that the above inclusion is an equality for $n\geq n_1$.
Without loss of generality, we can assume that $n_1=1$. This implies that the short exact sequences $$0\to Q_n\to \HH^{n+1}_k\to P_n\to 0$$ split for $n\geq 1$. This means that $\HH^{n+1}_k\cong Q_n\oplus P_n$ and thus that $P_{n}\cong P_{n-1}$ for $n\geq 0$. 

By induction on $n\geq 1$, the sheaf $\mathcal{H}/\mathcal{H}^n$ is flat over $R/\pi^n$ and there is a surjection
$$\mathcal{H}/\pi^n\twoheadrightarrow \mathcal{H}/\mathcal{H}^n.$$
Let $p$ be the Hilbert polynomial of $P_0$. The proper map
$$\tau: \text{Quot}(\mathcal{H},p)\to\text{Spec}(R)$$ contains $\text{Spec}\left(R/\pi^n\right)$ in its image for every $n\geq 1$ because the maps $$\text{Quot}\left(\mathcal{H}/\pi^n,p\right)\to \text{Spec}\left(R/\pi^n\right)$$ are all surjective.
This means that the map $\tau$ is surjective, so there exists a quotient $\HH\twoheadrightarrow P$, where $P$ is flat over $R$ and has Hilbert polynomial $p$. 
Let $$Q:=\text{ker}\,(\HH\to P)$$ be the kernel. Then
$Q_0\in \textbf{T}$, so $\mu(Q)=\mu(Q_0)\geq a$. 
Further, $\mu(Q)=\mu(Q_K)<a$ because $Q_K\in\textbf{F}$, so we obtain a contradiction.
This means that our assumption in the beginning of Step $1$ was false. Replace $\mathcal{H}$ with $\HH'$. Thus there exists a flat extension $\HH$ of $F_K$ over $R$ such that $\HH_k\in\textbf{F}$. 
\\

\textbf{Step 2.} The section $\OO_{\mathcal{Y}_K}\to F$ extends to a section $\OO_{\mathcal{Y}}\xrightarrow{s} \HH.$
Let $\mathcal{K}:=\text{coker}\,(s)$.
We next claim that there exists a subsheaf $\HH'\subset\HH$ such that $s$ factors through $\OO_{\mathcal{Y}}\xrightarrow{s'} \HH'$ and $\K':=\text{coker}\,(s')$ fits in a short exact sequence $0\to \K'_f\to \K'\to A\to 0$ with $\K'_f$ flat over $R$ and $A\in\textbf{T}$ supported on the central fiber.
\\

Indeed, write $0\to\K_f\to \K\to M\to 0$, where $\K_f\subset \K$ is the largest flat subsheaf of $\K$ over $R$ and $M$ is supported on the central fiber. 
Consider the short exact sequence 
$$0\to A\to M\to B\to 0$$ with $A\in\textbf{T}$ and $B\in\textbf{F}$. 
Define $\HH':=\text{ker}\,(\HH\to \HH_k\to B)$. The section $\mathcal{O}_{\mathcal{Y}}\xrightarrow{s} \HH$ factors through $\HH'$ with cokernel $\K'$. We have a short exact sequence $$0\to \K'\to \K\to B\to 0,$$ 
which implies that 
$0\to \K_f\to \K'\to A\to 0$. The subsheaf $\HH'$ has the desired property. 
\\

\textbf{Step 3.} We prove that the sheaf $\HH'$ from Step $2$ is in $\textbf{F}$. For this, suppose $\HH\in\textbf{F}$ and let $\HH'$ be defined by $$0\to\HH'\to\HH\to B\to 0,$$ where $B$ is supported on the central fiber in $\textbf{F}$. We claim that $\HH'_k\in\textbf{F}$. 

First, we have that $\HH'_K\cong\HH_K$. For the central fiber, consider the sequence \eqref{tor}:
$$0\to B\to\mathcal{H}'_k\to\mathcal{H}_k\to B\to 0.$$ 
We obtain short exact sequences
\begin{align*}
    0&\to P\to \HH_k\to B\to 0,\\
    0&\to B\to \mathcal{H}'_k\to P\to 0.
\end{align*}
The sheaf $P$ is a subsheaf of $\HH_k$, so $A\in\textbf{F}$. The extension of two sheaves in $\textbf{F}$ is in $\textbf{F}$, so $\mathcal{H}'_k\in\textbf{F}$. 

Replace $\HH$ with $\HH'$. 
This means that there exists a flat extension $\HH$ of $F$ over $R$ such that for
$\mathcal{I}:=\left[\mathcal{O}_\mathcal{Y}\xrightarrow{s}\mathcal{H}\right]$, we have $\mathcal{I}_K\cong I$,
$\HH_k\in\textbf{F}$, and the cokernel $\mathcal{K}:=\text{coker}(s)$ fits in a sequence
$0\to \K_f\to \K\to A\to 0$ with $\K_f$ flat over $R$ and $A\in\textbf{T}$ supported on the central fiber.
\\

\textbf{Step 4.} Next, we claim that there exists a subsheaf $\HH'\subset \HH$ such that the section $s$ factors through $\OO_{\mathcal{Y}}\xrightarrow{s'} \HH'$ and has cokernel $\mathcal{K}':=\text{coker}\,(s')$ in $\textbf{T}$. 

For a sheaf $C\in \text{Coh}_{\leq 1}(Y)$, let $F(C)\in\textbf{F}$ be the sheaf in $\textbf{F}$ from the torsion pair short exact sequence. 
First, a diagram chasing shows that $F(\K_f)=F(\K).$ Let $P_0:=F(\K_f)$.
We assume the claim is false.
Consider the short exact sequence:
\begin{equation}\label{ses}
0\to Q_0\to \K_{k}\to P_0\to 0.
\end{equation}
Let $\HH^1:=\text{ker}\,(\HH\to P_0).$ 
The section $\OO_{\mathcal{Y}}\xrightarrow{s} \HH$ factors through $\OO_{\mathcal{Y}}\to \HH^1$. After restricting this sequence to the central fiber, we obtain the short exact sequence
$$0\to P_0\to \HH^1_k\to A_0\to 0.$$ Let $\K^1:=\text{coker}\left(\OO_Y\to \HH^1_k\right)$, then we get two short exact sequences
\begin{align*}
    0&\to Q_1\to \K^1\to P_1\to 0,\\
    0&\to R_0\to \K^1\to Q_0\to 0,
\end{align*}
where the first is obtained as \eqref{ses} for $\K^1$ and
where $R_0$ is a quotient of $P_0$. 
We repeat this process to obtain sequences
\[0\to Q_n\to \K^n\to P_n\to 0\] and quotients $P_n\twoheadrightarrow R_n$. By the assumption that the claim is false, the sheaf $P_n$ is non-trivial. Observe that 
\[\mathrm{supp}(\mathcal{K}^{n+1})-\mathrm{supp}(\mathcal{K}^n)=\mathrm{supp}(R_n)-\mathrm{supp}(P_n)\in H_2(Y),\] 
where we let $\mathcal{K}^0:=\mathcal{K}_k$.
For $H$ ample line bundle on $Y$, the intersection of $H$ with the supports above is non-zero, and equals zero if the supports are zero. Thus, for $n$ large enough, we have $\mathrm{supp}(\mathcal{K}^{n+1})=\mathrm{supp}(\mathcal{K}^n)$ and $\mathrm{supp}(R_n)=\mathrm{supp}(P_n)$. If $R_n$ is not isomorphic to $P_n$, then $\mathrm{ker}(P_n\twoheadrightarrow R_n)$ has support of dimension zero, and thus $P_n$ cannot be in $\textbf{F}$. Thus, from $n$ large enough, we have that $P_n\cong R_n$. We may assume this happens from $n=0$.
Similarly to Step $1$, we show that the eventually $P_n\cong P_{n+1}$ for $n$ large enough. There are maps $Q_{n+1}\to Q_n$ for $n\geq 0$. 
The sequence for $P_n$ thus also becomes eventually constant; we assume this happens from $n=0$.

Using an argument as in Step $1$,
the quotient $\K_f/\pi\twoheadrightarrow P_0$ on the central fiber can be lifted to a quotient 
$$\K_f\twoheadrightarrow \mathcal{L},$$ where $\mathcal{L}$ is flat over $R$. We have $\mu(\mathcal{L}_k)=\mu(P_0)$. 
Consider the torsion pair sequence
$$0\to A\to \mathcal{L}_K\to B\to 0,$$ with $A\in \textbf{T}$ and $B\in \textbf{F}$. 
There is a surjection $(\K_f)_K\twoheadrightarrow B$. Further, we have that $(\K_f)_K\in \textbf{T}$. 
This means that $B=0$ and thus that $\mathcal{L}_K\in\textbf{T}$. 
We thus have that $\mu(\mathcal{L})=\mu(\mathcal{L}_0)<a\leq \mu(\mathcal{L}_K)=\mu(\mathcal{L})$.
This contradiction explains that there indeed exists a subsheaf $\HH'\subset \HH$ such that $\text{coker}\left(\OO_{\mathcal{Y}}\to \HH'\right)\in\textbf{T}$. 
\\

\textbf{Step 5.}
Using the argument in Step $3$, we see that $\HH'_k\in\textbf{F}$. 
Replace $\mathcal{H}'$ with $\HH$.
The $R$-flat complex $\mathcal{I}:=\left[\OO_{\mathcal{Y}}\xrightarrow{s}\HH\right]$ is thus a BS family and $\mathcal{I}_K\cong I$. 
\end{proof}

\subsection{The virtual fundamental class}\label{vfc}
Next, we explain that $\text{BS}^f_n(Y,\beta)$ has a natural virtual fundamental class. Proposition \ref{open} implies that the locus of BS pairs inside $\mathfrak{LM}$ is open. We now explain how the result follows from \cite{ht}.
Let $\mathcal{H}om\,(\mathbb{I},\mathbb{I})_0$ be the kernel of the trace map $$\text{Tr}: \mathcal{H}om\,(\mathbb{I},\mathbb{I})\to \OO_{Y\times \text{BS}^f_n(Y, \beta)}.$$
The Atiyah class $\text{At}\in \text{Ext}^1\left(\mathbb{I}, \mathbb{I}\otimes \mathbb{L}_{Y\times \text{BS}^f_n(Y, \beta)}\right)$ induces a map, see \cite[Sections 4.2, 4.5]{ht}:
\begin{equation}\label{equ}
    R\pi_{2*}\left(R\mathcal{H}om(\mathbb{I},\mathbb{I})_0\otimes\pi_1^*\omega_{Y}\right)[2]\to \mathbb{L}_{\text{BS}^f_n(Y, \beta)}.
\end{equation}

\begin{thm}\label{vir}
The map \eqref{equ} is a perfect obstruction theory. Thus the algebraic space $\text{BS}^f_n(Y,\beta)$ carries a virtual fundamental class
$$[\text{BS}^f_n(Y,\beta)]^{\text{vir}}\in A_d(\text{BS}^f_n(Y,\beta)), 
H_{2d}(\text{BS}^f_n(Y,\beta),\mathbb{Z}),$$
where $d=-\chi(\text{RHom}(I,I)_0)=\beta\cdot c_1(Y)$
for $I$ a BS pair in $\text{BS}^f_n(Y,\beta)$.
\end{thm}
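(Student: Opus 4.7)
The plan is to invoke the Huybrechts--Thomas construction \cite{ht} directly. By Propositions \ref{open} and \ref{aut}, the space $\text{BS}^f_n(Y,\beta)$ is an open substack of the Lieblich stack $\mathfrak{LM}$ whose points have trivial automorphisms modulo scalars, so the trace-free Atiyah class of the universal pair $\mathbb{I}$ produces the map \eqref{equ}, and its being an $h^0$-isomorphism and $h^{-1}$-surjection onto $\mathbb{L}_{\text{BS}^f_n(Y,\beta)}$ is formal given the vanishing $\text{Ext}^{\leq -1}(I,I) = 0$ built into the Lieblich condition (compare \cite[Sections 4.2, 4.5]{ht}).

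The real technical content is perfectness of
\[
E^{\cdot} := R\pi_{2*}\left(R\mathcal{H}om(\mathbb{I},\mathbb{I})_0 \otimes \pi_1^*\omega_{Y}\right)[2],
\]
i.e.\ that $E^{\cdot}$ has tor-amplitude $[-1,0]$. By cohomology and base change this reduces pointwise to showing $\text{Ext}^i(I,I)_0 = 0$ for $i \notin \{1,2\}$ at every BS pair $I$. The vanishing for $i \leq -1$ is the Lieblich condition, and for $i = 0$ it follows from Proposition \ref{aut}. For $i \geq 3$, I would use Serre duality $\text{Ext}^i(I,I) \cong \text{Ext}^{3-i}(I, I \otimes \omega_Y)^{\vee}$ together with the triangles \eqref{tri1} and \eqref{tri2}: since $F$ and $Q = h^1(I)$ both lie in $\text{Coh}_{\leq 1}(Y)$, most of the relevant Ext groups on the threefold $Y$ vanish by support dimension, and the only surviving top-degree piece is the trace contribution $\text{Hom}(\OO_Y, \OO_Y \otimes \omega_Y) \cong H^0(Y, \omega_Y)$, which is precisely what gets removed by passing to the traceless part.

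Finally, the virtual dimension $d = -\chi(\text{RHom}(I,I)_0) = \beta \cdot c_1(Y)$ is a formal Hirzebruch--Riemann--Roch calculation from $\ch(I) = (1, 0, -\beta, -n)$ and $\td(Y)$, identical to the PT case recalled in Subsection \ref{pt}.

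The main obstacle I anticipate is the top-degree trace-free Ext vanishing: keeping the trace contributions from the three pieces $\OO_Y$, $\mathcal{I}_C$, and $Q[-1]$ straight through the two triangles \eqref{tri1} and \eqref{tri2} takes some diagram-chasing, but the argument is parallel to the PT case and follows the template in \cite{ht}.
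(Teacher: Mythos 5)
Your proposal follows the same route as the paper's proof: invoke Huybrechts--Thomas \cite{ht} for the obstruction theory, then reduce perfectness of $E^{\cdot}$ to the pointwise vanishing $\text{Ext}^i(I,I)_0=0$ for $i\notin\{1,2\}$, with $i\le 0$ handled by the Lieblich condition together with Proposition \ref{aut}, and $i=3$ handled by Serre duality. The paper is terser (it cites the comment at the end of \cite[Section 4.3]{ht} together with Proposition \ref{aut}), but you have isolated exactly the same ingredients and the same reduction, and the Hirzebruch--Riemann--Roch computation of the virtual dimension is standard and correct.

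One point deserves to be made explicit, although it does not invalidate the argument: you assert that the $i=3$ case is ``parallel to the PT case'' because only the trace contribution $H^0(Y,\omega_Y)$ survives. Unwinding the diagram chase (Serre duality gives $\text{Ext}^3(I,I)_0\cong \text{Hom}(I,I\otimes\omega_Y)_0^{\dual}$, then one runs \eqref{tri1} and \eqref{tri2} with the twist by $\omega_Y$), the nontrivial step is showing $\text{Ext}^{-1}(I,F\otimes\omega_Y)=0$, which reduces to $\text{Hom}(Q,F\otimes\omega_Y)=0$ for $Q\in\textbf{T}$, $F\in\textbf{F}$. In the PT case $Q$ is zero-dimensional and $F\otimes\omega_Y$ is pure one-dimensional, so this vanishing is a pure support-dimension statement. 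For a BS pair, however, $Q$ can have one-dimensional support, and one needs that $(\textbf{T},\textbf{F})$ is compatible with the $\omega_Y$-twist — i.e.\ that $T\in\textbf{T}$ implies $T\otimes\omega_Y^{-1}\in\textbf{T}$, or equivalently $F\in\textbf{F}$ implies $F\otimes\omega_Y\in\textbf{F}$. Since $\textbf{T}$ is defined by $Rf_*T\in\text{Coh}_{\le 0}(X)$, which encodes $R^1f_*T=0$ and not merely a support condition, this compatibility is not ``by support dimension'' and genuinely depends on the restriction of $\omega_Y$ to the exceptional fibers. This point is equally implicit in the paper's appeal to \cite{ht} and Proposition \ref{aut}; if you flesh out the top-degree vanishing, you should prove the $\omega_Y$-twist compatibility of $(\textbf{T},\textbf{F})$ rather than attributing the vanishing to support dimension alone.
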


\begin{proof}
The map \eqref{equ} is an obstruction theory by \cite[Theorem 4.1 and Section 4.5]{ht}. It is further perfect by the comment at the end of \cite[Section 4.3]{ht} and by Proposition \ref{aut}. The space $\text{BS}^f_n(Y,\beta)$ thus carries a virtual fundamental class of dimension $d$ by \cite{bf}.
\end{proof}

\subsection{BS generating series}\label{inse}
Let $\mathbb{I}\in D^b\left(Y\times \text{BS}^f_n(Y,\beta)\right)$ be the universal BS pair.  For an insertion $\gamma\in H^l(Y,\mathbb{Z})$ and an integer $k\geq 0$, define
$$\ch_{2+k}(\gamma)(-): H_*(\text{BS}^f_n(Y,\beta),\mathbb{Q})\to H_{*-2k+2-l}(\text{BS}^f_n(Y,\beta),\mathbb{Q})$$ by the formula
$$\ch_{2+k}(\gamma)(-)=\pi_{2*}\left(\text{ch}_{2+k}(\mathbb{I}) \pi_1^*(\gamma)\cap \pi_2^*(-)\right).$$
 The BS invariants with insertions $\gamma_1,\cdots,\gamma_r$ and descendant levels $\kappa_1,\cdots, \kappa_r\geq 0$ are defined by \[\langle \tau_{\kappa_1}(\gamma_1),\cdots, \tau_{\kappa_r}(\gamma_r)\rangle_{\beta,n}=\int_{\left[\text{BS}^f_n(Y,\beta)\right]^{\text{vir}}}\ch_{2+\kappa_1}(\gamma_1)\cdots \ch_{2+\kappa_r}(\gamma_r).\]
The generating series for BS invariants with insertions and descendant levels as above and class $\beta$ is given by the Laurent series in $q$: $$\text{BS}^f_{\beta}(q; \gamma, \kappa):=\sum_{n\in\mathbb{Z}} \langle \tau_{\kappa_1}(\gamma_1),\cdots,\tau_{\kappa_r}(\gamma_r)\rangle_{\beta,n} q^n.$$
The total generating series of BS invariants with insertions and descendant levels as above is defined by the generating series in $\mathbb{C}[\Delta]_{\Phi}$:
$$\text{BS}^f(q; \gamma, \kappa):=\sum_{\beta\in N_{ 1}(Y)}\text{BS}^f_{\beta}(q; \gamma, \kappa)\,q^\beta.$$

\section{Degeneration formula for BS invariants}\label{deg}

In this section, we define relative BS invariants and prove a degeneration formula for BS invariants following the degeneration formula for DT invariants of Li--Wu \cite{lw}. 
In Subsection \ref{rbsp}, we define relative BS pairs and show that their moduli are DM stacks. In Subsection \ref{degbs2}, we prove a degeneration formula at the cycle level; during this proof, we also construct  virtual fundamental classes for relative BS pairs as in Maulik--Pandharipande--Thomas \cite{mpt}, Li--Wu \cite{lw}.
In Subsection \ref{relBS}, we define relative BS invariants and state the degeneration formula.

\subsection{Relative BS pairs}\label{rbsp}
Let $X$ and $Y$ be projective threefolds with $Y$ is smooth, and let $f:Y\to X$ be a birational map with $Rf_*\mathcal{O}_Y=\mathcal{O}_X$. Let $U\subset X$ be the maximal open subset such that $f^{-1}(U)\xrightarrow{\sim} U$ is an isomorphism and let $E:=Y\setminus f^{-1}(U)$.
Consider $S\subset Y$ a divisor which does not intersect $E$. Recall the definition of $Y[k]$, the $k$-step degeneration of $Y$, from Subsection \ref{relPT}.
\textit{A relative BS pair} is a two term complex $$I=\left[\mathcal{O}_{Y[k]}\xrightarrow{s} F\right],$$ where $F$ is a sheaf on $Y[k]$ with $\pi_*[F]=\beta\in H_2(Y,\mathbb{Z})$, $\chi(F)=n$ such that:

(i) the restriction $I|_{Y\setminus S}$ is a BS pair for $f:Y\setminus S\to X$,

(ii) the restriction of $I|_{Y[k]\setminus E}$ is a PT relative pair, see Subsection \ref{pt} or \cite[Section 3.7]{pt} for definitions.

 Consider the functor \[\Phi_{\text{rBS}(Y,S)}:(\text{Schemes}/k)^{\text{op}}\to\text{Sets}\] with $\Phi_{\text{rBS}(Y,S)}(B)$ the set of equivalence classes of pairs 
$\OO_{Y[k]\times B}\to\mathcal{F}$ for some $k\geq 0$ such that $\OO_{Y[k]\times b}\to \mathcal{F}|_{Y[k]\times b}$ is a relative BS pair for every $b\in B$. Two families $\mathcal{F}$ and $\mathcal{F}'$ are equivalent if there exists a line bundle $\mathcal{L}$ on $B$ such that $\mathcal{F}\cong \mathcal{F}'\otimes\pi_2^*\mathcal{L}$.

\begin{thm}\label{prorel}
The functor $\Phi_{\text{rBS}(Y,S)}$ is represented by a proper DM stack $\text{BS}^f_n(Y/S,\beta)$.
\end{thm}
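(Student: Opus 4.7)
The plan is to prove Theorem 4.1 by following very closely the proof of Theorem 3.1 (the absolute case) in combination with the construction of relative PT moduli stacks due to Li--Wu \cite{lw} as used in Subsection 2.8. The key geometric input is the hypothesis $S\cap E=\emptyset$: this means the two open subsets $Y[k]\setminus S$ and $Y[k]\setminus E$ of $Y[k]$ already cover $Y[k]$, and the two defining conditions (BS on $Y\setminus S$ and PT-relative on $Y[k]\setminus E$) are \emph{independent} open conditions that overlap only on the locus where both reduce to being an ordinary PT pair (since away from the exceptional locus, BS pairs and PT pairs coincide).

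First, I would work inside the Artin stack of gluable perfect complexes on the universal expanded degeneration $Y[\bullet]$ over the Li--Wu stack of expansions, with fixed Chern character $(\beta,n)$. Openness of $\Phi_{\mathrm{rBS}(Y,S)}$ reduces to openness of each of the two local conditions: the BS condition on the open $Y[k]\setminus S$ is open by Proposition \ref{open}, and the PT-relative condition on the open $Y[k]\setminus E$ is open by the standard arguments of \cite{pt,lw}. Boundedness over each fixed expansion level $k$ follows by combining the boundedness of BS pairs with fixed numerics and the Li--Wu boundedness for relative PT pairs on the complementary opens.

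Second, separatedness and completeness would follow from a Langton-style argument interleaving Propositions \ref{sep} and \ref{com} with the Li--Wu procedure. Given an $R$-flat relative BS family over the generic fiber of a DVR, one first applies the procedure of Proposition \ref{com} near $E$ to arrange that the restriction to $Y\setminus S$ extends to a BS family over the special fiber; since $S\cap E=\emptyset$, these modifications take place in a neighborhood of $E$ disjoint from $S$ and do not affect the PT-relative side. One then applies Li--Wu's expansion-and-modification procedure near $S$, inserting enough bubbles $\PP_S(N_S\oplus\OO)$ so that the limit sheaf $F$ is pure, meets $S_\infty$ and the singular loci of $Y[k]$ transversely, and $\mathrm{coker}(s)$ avoids the singular loci. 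This second procedure takes place in a neighborhood of $S$ disjoint from $E$, so it preserves the BS condition already obtained there. Separatedness then follows from the same two-sided uniqueness argument as in Proposition \ref{sep}, where the automorphism bound of Proposition \ref{aut} on $Y\setminus S$ combines with the PT-relative uniqueness on the other piece.

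Third, the moduli is Deligne--Mumford rather than an algebraic space: by Proposition \ref{aut} an automorphism of a relative BS pair is a scalar on $Y\setminus S$, and the $\left(\C^*\right)^k$ automorphisms of $Y[k]/Y$ are cut down to a finite group by the transverse intersection of $F$ with $S_\infty$ together with the requirement that $\mathrm{coker}(s)$ be supported away from the singular loci of $Y[k]$, exactly as in the construction of $\PT_n(Y/S,\beta)$. The main obstacle will be the completeness step: one must verify that the BS-style Langton reduction of Proposition \ref{com} and the Li--Wu expansion procedure can be carried out in either order (or alternated) without each undoing the work of the other; the disjointness $S\cap E=\emptyset$ is exactly what makes this compatibility hold, and this is the place where the hypothesis on $S$ is used in an essential way.
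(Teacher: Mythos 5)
Your overall strategy matches the paper's: exploit the disjointness $S\cap E=\emptyset$ to treat the BS condition on $Y\setminus S$ and the relative-PT condition on $Y[k]\setminus E$ as independent, reducing openness, boundedness, and finiteness of automorphisms to the known results for $\Phi_{\text{BS}(Y)}$ and $\Phi_{\text{rPT}(Y\setminus E,S)}$. The paper phrases this by embedding $\Phi_{\text{rBS}(Y,S)}\hookrightarrow \Phi_{\text{BS}(Y)}\times\Phi_{\text{rPT}(Y\setminus E, S)}$, which is the same idea.

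Where you diverge is the completeness step, and I think your version contains a real flaw that the paper sidesteps. You propose to run the two Langton-type procedures \emph{sequentially} (first Proposition~\ref{com} near $E$, then the Li--Wu expansion near $S$), and you yourself flag as the ``main obstacle'' the question of whether the second procedure undoes the first. Two problems: (i) the claim that the modifications of Proposition~\ref{com} ``take place in a neighborhood of $E$'' is not accurate — the elementary modifications there (passing to kernels of maps to $P_0$, $B$, $\mathcal{L}$, etc.) are global operations on $\mathcal{H}$ over $\mathcal{Y}\setminus\mathcal{S}$, not supported near $E$; (ii) even granting a localization of support, you have not shown the two procedures actually commute or stabilize, and this compatibility is not free. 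The paper avoids this entirely by carrying out the two extensions \emph{independently}: it extends the restriction of $I$ to $\mathcal{Y}_K\setminus\mathcal{S}_K$ to an $R$-flat BS family $\mathcal{I}^1$ on $\mathcal{Y}\setminus\mathcal{S}$ via Proposition~\ref{com}, and separately extends the restriction to $\mathcal{Y}_K[k]$ to an $R$-flat relative PT family $\mathcal{I}^2$ on $\mathcal{Y}[s]\setminus\mathcal{E}$ via Li--Wu (extending trivially over $E$). On the overlap $\mathcal{U}=\mathcal{Y}\setminus(\mathcal{S}\cup\mathcal{E})$ both restrictions are ordinary PT families that agree on the generic fiber, so \emph{separatedness} of the PT moduli space forces $\mathcal{I}^1|_\mathcal{U}\cong\mathcal{I}^2|_\mathcal{U}$, and the two families glue. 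The point is that there is no compatibility to verify by hand: separatedness of PT on the overlap does the work. You should replace your interleaving argument with this extend-in-parallel-and-glue argument.

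Your identification of the DM structure (scalars from Proposition~\ref{aut} on $Y\setminus S$, finite cutdown of $(\mathbb{C}^*)^k$ from the transversality and support constraints as in $\PT_n(Y/S,\beta)$) agrees with the paper, as does the use of Proposition~\ref{sep} plus the relative analogue of Proposition~\ref{aut} for separatedness.
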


\begin{proof}
We need to show that the functor $\Phi_{\text{rBS}(Y,S)}$ is bounded, open, separated, complete, and has finite automorphisms. The functor $\Phi_{\text{rBS}(Y,S)}$ is a subfunctor of \[\Phi_{\text{rBS}(Y,S)}\hookrightarrow \Phi_{\text{BS}(Y)}\times\Phi_{\text{rPT}(Y\setminus E, S)}.\] 
It is immediate to see that this implies that $\Phi_{\text{rBS}(Y, S)}$ is bounded and open. Further, BS pairs on $Y$ have trivial automorphisms and relative PT pairs on $(Y\setminus E, S)$ have finite automorphisms, so $\Phi_{\text{rBS}(Y, S)}$ has finite automorphisms. Separatedness follows as in Proposition \ref{sep} using the analogue of Proposition \ref{aut}.

To show that $\Phi_{\text{rBS}(Y, S)}$ is complete, let $R$ be a DVR with fraction field $K$. Let $\mathcal{Y}=Y\times R$, $\mathcal{E}=E\times R$, and $\mathcal{S}=S\times R$.
Consider a $K$-relative BS pair \[I=\left[\OO_{\mathcal{Y}_K[k]}\to F\right].\]
There exists an extension of the BS pair $\OO_{\mathcal{Y}_K\setminus \mathcal{S}_K}\to F$ to an $R$-flat BS pair
\begin{equation}\label{c1}
    \mathcal{I}^1=\left[\OO_{\mathcal{Y}\setminus \mathcal{S}}\xrightarrow{s^1} \mathcal{F}^1\right].\end{equation}
Further, there exists an extension of the PT pair $I_{\mathcal{Y}_K}=
\left[\OO_{\mathcal{Y}_K[k]}
\to F_{\mathcal{Y}_K[k]}\right]$ to an $R$-flat PT pair for some $s\geq k$:
\begin{equation}\label{c2}
    \mathcal{I}^2=\left[\OO_{\mathcal{Y}[s]\setminus \mathcal{E}}\xrightarrow{s^2} \mathcal{F}^2\right].
    \end{equation}
    To see this, extend the pair trivially over $E$ and use completeness of the functor $\Phi_{\text{rPT}(Y, S)}$ \cite[Subsection 6.4]{lw}.
    Let $\mathcal{U}:=\mathcal{Y}\setminus \left(\mathcal{S}\cup \mathcal{E}\right)$.
The pairs $\mathcal{I}^1|_{\mathcal{U}}$ and $\mathcal{I}^2|_{\mathcal{U}}$ are PT pairs such that
\begin{equation}\label{isopt}
    \mathcal{I}^1|_{\mathcal{U}_K}\cong \mathcal{I}^2|_{\mathcal{U}_K}\cong I|_{\mathcal{U}_K}.\end{equation}
By the separatedness of the moduli of PT pairs, the isomorphism \eqref{isopt} extends to 
\[\mathcal{I}^1|_{\mathcal{U}}\cong \mathcal{I}^2|_{\mathcal{U}}.\]
We can glue the sheaves $\mathcal{F}^1$ and $\mathcal{F}^2$ and the sections $s^1$ and $s^2$ to obtain a complex $\mathcal{I}=\left[\mathcal{O}_\mathcal{Y}\xrightarrow{s}\mathcal{F}\right]$ with $\mathcal{I}|_{\mathcal{Y}\setminus \mathcal{S}}\cong \mathcal{I}_1$ and $\mathcal{I}|_{\mathcal{Y}[s]\setminus \mathcal{E}}\cong \mathcal{I}_2$. The complex $\mathcal{I}$ is the desired extension of $I$. 
\end{proof}

We next check that relative BS pairs are open in the derived category. 
\begin{prop}\label{operel}
Let $B_0$ be a scheme over $k$, and $i_0: B_0\hookrightarrow B$ a nilpotent thickening. Consider $$I_0=\left[\OO_{Y[k]\times B_0}\to F_0\right]$$
a relative BS pair over $B_0$. Let $I\in D^b\left(Y[k]\times B\right)$ be a complex with trivial determinant such that $Li_0^*I\cong I_0.$
Then there exists a sheaf $F$ on $Y[k]\times B$, flat over $B$, such that $$I\cong \left[\OO_{Y[k]\times B}\to F\right].$$
\end{prop}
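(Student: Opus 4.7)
The plan is to combine the openness of the BS condition in Lieblich's stack (Lemma \ref{openimp}) with the fact that a nilpotent thickening is a homeomorphism on underlying spaces, and then to extract the concrete two-term presentation $I \cong [\OO_{Y[k]\times B} \to F]$ via a deformation-theoretic lifting argument, using the trivial determinant hypothesis.

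First I observe that $I$ corresponds to a morphism $B \to \mathfrak{LM}$, since $\Ext^{\leq -1}(I, I)=0$ holds relatively on $B$ by semicontinuity applied to the corresponding vanishing $\Ext^{\leq -1}(I_0, I_0)=0$ for the BS pair $I_0$. By Lemma \ref{openimp}, the substack of BS pairs is open in $\mathfrak{LM}$. Since $i_0: B_0 \hookrightarrow B$ is a topological isomorphism (the ideal $J = \ker(\OO_B \to \OO_{B_0})$ being nilpotent), and $B_0$ lands in this open substack by the hypothesis on $I_0$, it follows that all of $B$ lands in the open substack, so $I$ defines a family of BS pairs on $B$.

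To extract the concrete presentation with $F$ flat over $B$, I lift the canonical morphism $\phi_0: I_0 \to \OO_{Y[k]\times B_0}$ from the BS triangle $I_0 \to \OO_{Y[k]\times B_0} \to F_0 \xrightarrow{[1]}$ to a morphism $\phi: I \to \OO_{Y[k]\times B}$, reducing by induction to the case of a square-zero thickening. The obstruction lies in $\Ext^1_{Y[k]\times B_0}(I_0, \OO_{Y[k]\times B_0}\otimes_{B_0} J)$. Applying $R\Hom(-,\OO\otimes J)$ to the triangle $I_0 \to \OO \to F_0$ and using that $F_0$ has support of codimension $\geq 2$ (hence $\lExt^{i}(F_0,\OO) = 0$ for $i \leq 1$ away from the nodal locus of $Y[k]$) reduces the obstruction to its trace component, controlled by $H^1(\OO_{Y[k]\times B_0}\otimes J)$; the trivial determinant hypothesis $\det(I)\cong \OO$ forces this trace component to vanish, producing the lift $\phi$. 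Setting $F := \text{cone}(\phi)$, derived base change gives $Li_0^*F \cong F_0$, a sheaf flat over $B_0$. A derived-category version of the local flatness criterion for square-zero thickenings (cf.\ the representability results of Lieblich \cite{li}) then forces $F$ to be a sheaf flat over $B$, yielding the desired presentation.

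The main obstacle I expect is the lifting step, where the trivial determinant hypothesis must be used nontrivially to cancel the trace part of the obstruction — the local vanishings for the codim-$\geq 2$ sheaf $F_0$ alone are insufficient because $H^1(\OO_{Y[k]\times B_0}\otimes J)$ need not vanish. Once the morphism is lifted, forming the cone and verifying its flatness are standard derived-category manipulations.
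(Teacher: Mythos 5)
Your argument has a genuine gap, and the paper resolves it with an idea you've missed: \emph{gluing two simpler openness results rather than proving openness directly on $Y[k]$.}

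The first obstacle is that Lemma \ref{openimp} was established only for a \emph{smooth projective} variety $Y$ (its proof uses the duality $\mathbb{D}(-)=R\mathcal{H}om(-,\omega_Y)[2]$, which requires smoothness). In Proposition \ref{operel} the ambient space is the expanded degeneration $Y[k]$, which is singular along the copies of $S$, so you cannot simply apply Lemma \ref{openimp} to conclude that $I$ lands in a ``BS substack'' of $\mathfrak{LM}_{Y[k]}$. Moreover, even if one had such openness, the \emph{relative} BS condition is not merely the torsion-pair condition on $Y[k]$: it also involves transversality of $F$ to the singular loci and to the relative divisor $S_\infty$, and finiteness of automorphisms covering $\mathrm{Aut}(Y[k]/Y)$. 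Your argument does not address any of these.

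The paper sidesteps all of this by restriction-and-gluing. The relative BS condition is by definition a local condition that coincides with the ordinary BS condition on $(Y\setminus S)\times B$ (so Proposition \ref{open}, which is about smooth $Y$, applies and produces a flat $F^1$ with section $s^1$), and with the relative PT condition on $(Y[k]\setminus E)\times B$ (so the Pandharipande--Thomas openness result \cite[Theorem 2.7]{pt} applies and produces a flat $F^2$ with section $s^2$). Since $S$ and $E$ are disjoint, these two opens cover $Y[k]$, and the two presentations agree on the overlap $(Y\setminus(S\cup E))\times B$, so the sheaves and sections glue. Your second step — lifting the morphism $\phi_0:I_0\to\OO$ by a determinant/obstruction argument — is essentially a re-derivation of the PT input \cite[Theorem 2.7]{pt}, which the paper invokes as a black box; but deploying it directly over $Y[k]$ runs into the same singularity issue, and it does not by itself give the transversality part of the relative condition. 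The missing idea is to work on the two open pieces separately and glue.
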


\begin{proof}
The analogous statement for relative PT pairs follows as in \cite[Theorem 2.7]{pt}.
By Proposition \ref{open}, we have that
$$I|_{(Y\setminus S)\times B}\cong  \left[\OO_{(Y\setminus S)\times B}\xrightarrow{s^1} F^1\right],$$ 
where $F^1$ is flat over $B$.
Using the analogous statement for relative PT pairs, we have that
$$I|_{\left(Y[k]\setminus E\right)\times B}\cong I^2:= \left[\OO_{\left(Y[k]\setminus E\right)\times B}\xrightarrow{s^2} F^2\right],$$
where $F^2$ is flat over $B$.
The restrictions of $F^1$ and $F^2$ to $\left(Y\setminus \left(S\cup E\right)\right)\times B$ are isomorphic. The sections $s^1$ and $s^2$ are identified via this isomorphism, so $I$ is a relative BS pair.  
\end{proof}

\subsection{The cycle degeneration formula for BS invariants}\label{degbs2}
Let $B$ be a smooth curve and let $0\in B$.
Consider fourfolds $\mathcal{Y}$ and $\mathcal{X}$ fibered over $B$
\begin{equation*}
\begin{tikzcd}[column sep=small]
\mathcal{Y}\arrow[rr, "f"]\arrow[rd]& & \mathcal{X}\arrow[ld]\\
 & B &
\end{tikzcd}  
\end{equation*}
with $\mathcal{Y}$ a smooth fourfold and such that:
\begin{itemize}
    \item for $b\in B\setminus 0$, the fiber $\mathcal{Y}_b$ is smooth and
    the morphism $f_b: \mathcal{Y}_b\to \mathcal{X}_b$ is birational with $Rf_{b*}\mathcal{O}_{\mathcal{Y}_b}=\mathcal{O}_{\mathcal{X}_b}$, and
    \item the restriction over $0$ of the map $f$ is a morphism $$f_0: Y_1\cup_S Y_2\to X_1\cup_S X_2$$ such that $Y_1$ and $Y_2$ are smooth threefolds that intersect transversely in a smooth divisor $S$, $f_0|_{Y_1}: Y_1\cong X_1$, $X_1$ and $X_2$ intersect transversely in $S$, and $g:=f_0|_{Y_2}: Y_2\to X_2$ is a birational map with $Rf_{2*}\mathcal{O}_{Y_2}=\mathcal{O}_{X_2}$.
\end{itemize}
In particular, $S$ and the exceptional locus of $g: Y_2\to X_2$ do not intersect.
\\

We next discuss the setup for proving degeneration formulas and, in particular, define perfect obstruction theories for relative BS invariants following the proofs of degeneration formulas for DT and PT invariants \cite{lw}, \cite{mpt}. As in loc. cit., we obtain a cycle degeneration formula in Theorem \ref{cycle}.

Let $b\in B\setminus 0$ and let $(\beta,n)\in N_{\leq 1}(\mathcal{Y}_b)$. 
Let $\mathcal{B}:=\mathcal{B}(\beta, n)$ be the Artin stack of $(\beta,n)$-decorated semistable models of $\mathcal{Y}/B$ with universal family $$\widetilde{\mathcal{Y}}\to\mathcal{B}.$$
There is a natural map $\mathcal{B}\to B$.
Denote by $\mathcal{B}_0$ and $\widetilde{\mathcal{Y}}_0$ the fibers over $0$. We have that $\mathcal{B}\setminus\mathcal{B}_0\cong B\setminus 0$, and the universal family restricted to $\mathcal{B}\setminus\mathcal{B}_0$ is
$\mathcal{Y}\setminus\mathcal{Y}_0\to B\setminus 0.$
We can write \begin{equation}\label{zerof}
    \widetilde{\mathcal{Y}}_0=\bigcup_{k\geq 0} \mathcal{Y}_0[k],
    \end{equation}
    where $\mathcal{Y}_0[k]$ is defined by
$$\mathcal{Y}_0[k]:=Y_1\cup_S \mathbb{P}_S(N_S\oplus \OO)\cup_S\cdots\cup_S \mathbb{P}_S(N_S\oplus \OO)\cup_S Y_2,$$ with $k$ copies of $\mathbb{P}_S(N_S\oplus \OO)$, where $N_S$ is the normal bundle of $S$ in $Y_1$. The stack $\mathcal{B}$ has an étale map to the stack $\mathcal{C}$ from \cite[Subsection 2.1]{lw} where we ignore the decorations, and in particular it is smooth. The stack $\mathcal{C}_0$ is a smooth zero-dimensional stack, locally of finite type, consisting of a point for each degeneration level $k\geq 0$ with automorphism group $\mathbb{G}_m^k$, see \cite[Subsection 2.2]{Mnotes} for more details.

Let $\mathcal{P}\to\mathcal{B}$ be the stack of relative BS pairs on the fibers of $\widetilde{\mathcal{Y}}\to\mathcal{B}$. One can show as in Proposition \ref{prorel} that $\mathcal{P}$ is a proper DM stack with finite automorphism over $B$. Using the same argument as in Proposition \ref{operel} and the results of \cite{ht}, there is a relative perfect obstruction theory of $\mathcal{P}/\mathcal{B}$ defined as follows. Let $\mathbb{I}$ be the universal complex over $\widetilde{\mathcal{Y}}\times_{\mathcal{B}}\mathcal{P}$ and consider the relative perfect obstruction theory
\begin{equation}\label{o1}
    E:=R\pi_{2*}\left(R\mathcal{H}om(\mathbb{I}, \mathbb{I})_0\otimes \pi_1^*\omega_{\widetilde{\mathcal{Y}}/\mathcal{B}}\right)[2]\to \mathbb{L}_{\mathcal{P}/\mathcal{B}}.
    \end{equation}
We obtain a perfect obstruction theory \begin{equation}\label{o2}
    \mathcal{E}\to\mathbb{L}_{\mathcal{P}}
    \end{equation}
    from the relative perfect obstruction theory \eqref{o1} as follows. Consider the natural map $\phi: E\to \mathbb{L}_{\mathcal{P}/\mathcal{B}}\to \mathbb{L}_{\mathcal{B}}[1]$ and let $\mathcal{E}:=\text{cone}(\phi)[-1]$. The perfect obstruction theory \eqref{o2} is obtained from the following diagram:
\begin{equation*}
\begin{tikzcd}
\mathcal{E}\arrow{r}\arrow{d}& E\arrow{d}\arrow{r}& \mathbb{L}_{\mathcal{B}}[1]\arrow{d}{\text{id}}\\
\mathbb{L}_{\mathcal{P}}\arrow{r}& \mathbb{L}_{\mathcal{P}/\mathcal{B}}\arrow{r}& 
\mathbb{L}_{\mathcal{B}}[1].
\end{tikzcd}
\end{equation*}
Let $\mathcal{P}_0:=\mathcal{P}\times_B 0$.
Using the relative perfect obstruction theory \eqref{o1}, we obtain a perfect obstruction theory
\begin{equation}\label{o3}
    \mathcal{E}_0\to \mathbb{L}_{\mathcal{P}_0}
\end{equation} which fits in the diagram
\begin{equation*}
\begin{tikzcd}
\mathcal{E}_0\arrow{r}\arrow{d}& E|_{\mathcal{P}_0}\arrow{d}\arrow{r}& \mathbb{L}_{\mathcal{B}_0}[1]\arrow{d}{\text{id}}\\
\mathbb{L}_{\mathcal{P}_0}\arrow{r}& \mathbb{L}_{\mathcal{P}_0/\mathcal{B}_0}\arrow{r}& 
\mathbb{L}_{\mathcal{B}_0}[1].
\end{tikzcd}
\end{equation*}
Let $L_0$ be line bundle over $\mathcal{B}$ corresponding to $\mathcal{B}_0\subset \mathcal{B}$.
The perfect obstruction theories for $\mathcal{P}_0$ and $\mathcal{P}$ can be compared via the triangle, see \cite[Diagram 54]{mpt}:
\begin{equation}\label{unu}
    \mathcal{E}|_{\mathcal{P}_0}\to \mathcal{E}_0\to L^{\dual}_0[1].
\end{equation}
Next, denote by $\nu$ the data of two pairs $(\beta_1, n_1)$, $(\beta_2, n_2)$, and a positive integer $k$ such that $\beta_1+\beta_2=\beta$ and $n+k=n_1+n_2$. Then $S\cdot\beta_1=S\cdot\beta_2=k$, see \cite[Lemma 2.2]{hl}.
Define
$$\mathcal{P}_{\nu}:=\text{PT}_{n_1}(Y_1/S, \beta_1)\times_{\text{Hilb}(S,k)} \text{BS}^g_{n_2}(Y_2/S, \beta_2).$$
Then $\mathcal{P}_0=\bigcup_{\nu}\mathcal{P}_{\nu}$. Further, for every $\nu$, there exists a divisor \begin{equation}\label{bnu}
    \mathcal{B}_{\nu}\subset \mathcal{B}
\end{equation} whose pull-back to $\mathcal{P}$ is $\mathcal{P}_{\nu}$. Let $L_{\nu}$ be the associated line bundle to $\mathcal{B}_{\nu}$. Then $$\bigotimes_{\nu}L_{\nu}\cong L_0.$$ We can define a perfect obstruction theory $\mathcal{E}_{\nu}\to \mathbb{L}_{\mathcal{P}_{\nu}}$ as above which fits in a distinguished triangle:

\begin{equation}\label{doi}
    \mathcal{E}|_{\mathcal{P}_{\nu}}\to \mathcal{E}_{\nu}\to L^{\dual}_{\nu}[1].
\end{equation}
Let $\mathcal{P}_1=\text{PT}_{n_1}(Y_1/S, \beta_1)$ and $\mathcal{P}_2=\text{BS}^g_{n_2}(Y_2/S, \beta_2)$.
As in \eqref{o1}, there are relative perfect obstruction theories 
\begin{equation}\label{o54}
    E_i\to \mathbb{L}_{\mathcal{P}_i/\mathcal{B}_{\eta_i}}
\end{equation}
for $i\in\{1,2\}$, see \cite[Section 3.9]{mpt} for the definition of the stack $\mathcal{B}_{\eta_i}$. As in \eqref{o2}, there are perfect obstruction theories
\begin{equation}\label{o55}
    \mathcal{E}_i\to \mathbb{L}_{\mathcal{P}_i/\mathcal{B}_{\eta_i}}
\end{equation}
for $i\in\{1,2\}$ constructed from \eqref{o54}. 
The perfect obstruction theories \eqref{o55} induce natural virtual fundamental classes $\left[\text{PT}_{n_1}(Y_1/S,\beta_2)\right]^{\text{vir}}$ and $\left[\text{BS}^g_{n_2}(Y_2/S,\beta_2)\right]^{\text{vir}}$ with rational coefficients by \cite{bf}. 
Further, the perfect obstruction theories of $\mathcal{P}_{\nu}$ and its factors $\mathcal{P}_1$ and $\mathcal{P}_2$ are compared as follows, see \cite[Diagram 61]{mpt}:

\begin{equation}\label{trei}
  \begin{tikzcd}
  \mathcal{E}_1\oplus \mathcal{E}_2\arrow{r} \arrow{d}& \mathcal{E}_{\nu}\arrow{d} \arrow{r}& \Omega_{\text{Hilb}\,(S,k)}[1]\arrow{d}\\
  \mathbb{L}_{\mathcal{P}_1\times\mathcal{P}_2}\arrow{r}& \mathbb{L}_{\mathcal{P}_{\nu}}\arrow{r}& \mathbb{L}_{\mathcal{P}_{\nu}/\mathcal{P}_{1}\times\mathcal{P}_2}.
  \end{tikzcd}  
\end{equation}

\begin{thm}\label{cycle}
(a) Let $b\in B\setminus o$. Then $i_b^![\mathcal{P}]^{\text{vir}}=\left[\text{BS}^{f_b}_n(\mathcal{Y}_b, \beta)\right]^{\text{vir}}$.

(b) The restriction over $o$ has virtual fundamental class $i_0^![\mathcal{P}]^{\text{vir}}=[\mathcal{P}_0]^{\text{vir}}$.

(c) The fundamental class of the special fiber decomposes $[\mathcal{P}_0]^{\text{vir}}=\sum_{\nu}i_{\nu *}[\mathcal{P}_{\nu}]^{\text{vir}}$, where the sum is over all data $\nu$ consisting of two pairs $(\beta_1, n_1)$, $(\beta_2, n_2)$, and a positive integer $k$ such that $\beta_1+\beta_2=\beta$ and $n+k=n_1+n_2$. 

(d) The fundamental class $[\mathcal{P}_{\nu}]^{\text{vir}}$ further decomposes in the contributions of the two factors as follows
$$[\mathcal{P}_{\nu}]^{\text{vir}}=\Delta^!
\left([\text{PT}_{n_1}(Y_1/S,\beta_1)]^{\text{vir}}\times \left[\text{BS}^g_{n_2}(Y_2/S,\beta_2)\right]^{\text{vir}}\right).$$
Here, $\Delta$ is the diagonal embedding $\Delta:\text{Hilb}(S,k)\hookrightarrow \text{Hilb}(S,k)\times \text{Hilb}(S,k)$.
\end{thm}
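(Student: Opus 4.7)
The plan is to adapt the Li--Wu and Maulik--Pandharipande--Thomas proofs of the degeneration formulas for DT and PT invariants to our setting; all four parts amount to functoriality of the virtual pullback combined with the comparison triangles \eqref{unu}, \eqref{doi}, and \eqref{trei} that the author has already assembled in Subsection \ref{degbs2}. The reduction to the MPT argument is clean because the obstruction-theoretic input is identical once one observes that the relative BS pairs on $\mathcal{Y}/B$ satisfy the same categorical properties (openness, separatedness, properness with finite automorphisms) as relative PT pairs, by Theorem \ref{prorel} and Proposition \ref{operel}.

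For part (a), the Artin stack $\mathcal{B}$ of semistable models restricts over $B \setminus o$ to $B \setminus o$ itself with no bubbling, so $\mathcal{P}_b = \text{BS}^{f_b}_n(\mathcal{Y}_b, \beta)$ and the relative perfect obstruction theory $E$ on $\mathcal{P}/\mathcal{B}$ restricts to the absolute perfect obstruction theory of Theorem \ref{vir}. Since $b \in B$ is a smooth closed point, $i_b$ is a regular embedding and the compatibility of refined Gysin pullback with virtual fundamental classes gives the claim. For part (b), the same principle applies at $o$: the Cartier divisor $\mathcal{B}_o \subset \mathcal{B}$ has associated line bundle $L_o$, and the triangle \eqref{unu} identifies $\mathcal{E}_o$ as the modification of $\mathcal{E}|_{\mathcal{P}_o}$ by the shift $L_o^{\dual}[1]$; applying Manolache-style functoriality of the virtual pullback along $\mathcal{B}_o \hookrightarrow \mathcal{B}$, pulled back to $\mathcal{P}$, yields the desired equality of cycles.

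For part (c), the set-theoretic decomposition $\mathcal{P}_o = \bigcup_\nu \mathcal{P}_\nu$ together with the divisor identity $\bigotimes_\nu L_\nu \cong L_o$ from \eqref{bnu} upgrades to a cycle-level decomposition: the triangle \eqref{doi} identifies $\mathcal{E}_\nu$ as the natural obstruction theory on $\mathcal{P}_\nu$, and summing the contributions of the shifts $L_\nu^{\dual}[1]$ over $\nu$ recovers the shift $L_o^{\dual}[1]$ of part (b), producing $[\mathcal{P}_o]^{\text{vir}} = \sum_\nu i_{\nu*}[\mathcal{P}_\nu]^{\text{vir}}$. Part (d) then follows from the diagram \eqref{trei}: the obstruction theory $\mathcal{E}_\nu$ differs from the direct sum $\mathcal{E}_1 \oplus \mathcal{E}_2$ by the shifted cotangent complex of the smooth Hilbert scheme $\text{Hilb}(S,k)$, and since the square expressing $\mathcal{P}_\nu$ as a fiber product over the diagonal $\Delta: \text{Hilb}(S,k) \to \text{Hilb}(S,k) \times \text{Hilb}(S,k)$ is derived Cartesian with $\Delta$ a regular embedding in a smooth scheme, the refined Gysin pullback $\Delta^!$ produces exactly the claimed product decomposition.

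The main technical hurdle is part (c): verifying that the gluing of a relative PT pair on $Y_1$ with a relative BS pair on $Y_2$ across $S$ produces a discrete set of strata indexed by $\nu = ((\beta_1,n_1),(\beta_2,n_2),k)$ with precisely the line bundle identity $\bigotimes_\nu L_\nu \cong L_o$, so that no extra Artin-stack automorphisms of the expanded degeneration $\widetilde{\mathcal{Y}}_o = \bigcup_k \mathcal{Y}_o[k]$ contribute spurious multiplicities. The key simplification available to us, compared to the general setup of \cite{lw}, is that by hypothesis the divisor $S$ is disjoint from the exceptional locus of $g$, so the BS torsion-pair condition only propagates to the $Y_2$-factor while the $Y_1$-side is controlled by the usual relative PT formalism and can be quoted directly from \cite{mpt}.
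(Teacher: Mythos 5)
Your proposal takes essentially the same route as the paper: after assembling the comparison triangles \eqref{unu}, \eqref{doi}, and \eqref{trei}, the paper simply cites \cite[Propositions 6.2, 6.4, 6.5, Equation 6.6]{lw} for each of the four parts, which is precisely the Li--Wu/MPT deduction scheme you describe. Your version spells out a bit more (the regular-embedding/Gysin compatibility and the derived-Cartesian square over the diagonal) but contains the same ideas and the same reduction, so it is correct and in essential agreement with the paper.
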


\begin{proof}
(a) The restriction of the perfect obstruction theory of $\mathcal{P}$ to the fiber over $b$ is the same as the perfect obstruction theory for 
$\text{BS}^{f_b}_n(\mathcal{Y}_b, \beta)$, see \cite[Proposition 6.2]{lw}.

(b) Use the triangle \eqref{unu}, see \cite[Equation 6.6]{lw}.

(c) Use the triangle \eqref{doi} and $L_0\cong\otimes_{\nu} L_{\nu}$, the proof follows as in \cite[Proposition 6.4]{lw}.

(d) The claim follows from the diagram \eqref{trei}, see the proof in the DT case given in \cite[Proposition 6.5]{lw}.
\end{proof}



\subsection{Relative BS invariants}\label{relBS}
Recall the map $g :Y_2\to X_2$ from Subsection \ref{degbs2}. 
Recall the definitions of cohomologically weighted partition, of the basis $C_{\eta}$ for $H^{\cdot}(\text{Hilb}(S,\beta), \mathbb{Q})$, and of the invariants $|\eta|, l(\eta), \xi(\eta)$ from Subsection \ref{pt}. 
Consider the morphism $$\varepsilon: BS^g_n(Y_2/S,\beta)\to \text{Hilb}(S,d)$$
which sends a sheaf $F$ to its restriction to $S$, where $S\cdot\beta=d$. 
The relative BS invariants for insertions $\gamma_1,\cdots,\gamma_r\in H^{\cdot}(Y_2,\mathbb{Z})$ and descendant levels $\kappa_1,\cdots, \kappa_r\geq 0$ are defined by the formula:
\[\langle \tau_{\kappa_1}(\gamma_1),\cdots, \tau_{\kappa_r}(\gamma_r)|\,\eta\rangle^{\text{BS}}_{\beta,n}=\int_{\left[\text{BS}^g_n(Y_2/S,\beta)\right]^{\text{vir}}}\ch_{2+\kappa_1}(\gamma_1)\cdots \ch_{2+\kappa_r}(\gamma_r)\cap \varepsilon^*(C_{\eta}).\]
The generating series for the relative BS invariants with insertions and descendant levels as above, support $\beta$, and cohomologically weighted partition $\eta$ is defined by:
$$\text{BS}^{g/S}_{\beta, \eta}(q; \gamma, \kappa):=\sum_{n\in\mathbb{Z}} \langle \tau_{\kappa_1}(\gamma_1),\cdots,\tau_{\kappa_r}(\gamma_r)|\,\eta\rangle^{\text{BS}}_{\beta,n}\, q^n.$$

\begin{thm}\label{degen}
Consider a family $f:\mathcal{Y}\to\mathcal{X}$ as in Subsection \ref{degbs2}, consider a point $b\in B\setminus o$, denote by $f_b:\mathcal{Y}_b\to\mathcal{X}_b$, and let $\beta\in N_1\left(\mathcal{Y}_b\right)$. Consider insertion classes $\gamma_1,\cdots,\gamma_r\in H^{\cdot}(\mathcal{Y},\mathbb{Z})$ and descendant levels $\kappa_1,\cdots, \kappa_r\geq 0$. We abuse notation and write $\gamma_1, \cdots, \gamma_r$ for the restriction of the insertions classes to $\mathcal{Y}_b$, $Y_1$, and $Y_2$.
Then
$$\text{BS}^{f_b}_{\beta}(q; \gamma, \kappa)=
\sum \text{PT}^{Y_1/S}_{\beta_1,\eta}(q; \gamma, \kappa)
\text{BS}^{g/S}_{\beta_2,\eta^{\dual}}(q; \gamma, \kappa)\frac{(-1)^{|\eta|-l(\eta)}\xi(\eta)}{q^{|\eta|}},$$ 
where the sum on the right hand side is taken over all splittings $\beta_1+\beta_2=\beta$ and over all cohomologically weighted partitions $\eta$.
\end{thm}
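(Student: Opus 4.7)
The strategy is to promote the cycle-level identity of Theorem \ref{cycle} to a numerical identity of generating series by integrating the descendant insertions $\ch_{2+\kappa_i}(\gamma_i)$ against the virtual classes and matching the diagonal pullback along $\text{Hilb}(S,k)$ with the Nakajima basis. The argument mirrors the proof of the PT degeneration formula in \cite{mpt} and the DT version in \cite{lw}; the new input is that the moduli stack and perfect obstruction theory of relative BS pairs built in Subsection \ref{degbs2} already fit this framework.

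First I would use deformation invariance. Assume the classes $\gamma_i$ are pulled back from $H^{\cdot}(\mathcal{Y},\mathbb{Z})$ and let $\mathbb{I}$ denote the universal pair on $\widetilde{\mathcal{Y}}\times_{\mathcal{B}} \mathcal{P}$. Part (a) of Theorem \ref{cycle} gives
\[
\langle \tau_{\kappa_1}(\gamma_1),\dots,\tau_{\kappa_r}(\gamma_r)\rangle^{\text{BS}}_{\beta,n} \;=\; \int_{i_b^![\mathcal{P}]^{\text{vir}}} \prod_{i=1}^r \pi_{2*}\bigl(\ch_{2+\kappa_i}(\mathbb{I})\cdot \pi_1^*\gamma_i\bigr),
\]
and by part (b) this equals the same integral against $i_o^![\mathcal{P}]^{\text{vir}} = [\mathcal{P}_o]^{\text{vir}}$, reducing the problem to an evaluation on the central fibre.

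I would then apply parts (c) and (d) to decompose $[\mathcal{P}_o]^{\text{vir}}$ into contributions from the strata $\mathcal{P}_\nu$, indexed by $\nu=\bigl((\beta_1,n_1),(\beta_2,n_2),k\bigr)$ with $\beta_1+\beta_2=\beta$ and $n+k=n_1+n_2$. On each such stratum the universal complex $\mathbb{I}$ splits along $Y_1$ and $Y_2$ away from the gluing locus $S$, so each descendant $\ch_{2+\kappa_i}(\mathbb{I})\cdot\pi_1^*\gamma_i$ breaks into a $Y_1$-contribution and a $Y_2$-contribution. Distributing the $r$ insertions over the two sides produces the sum over partitions $A\sqcup B=\{1,\dots,r\}$ and factorizes the integrand against $\Delta^!\bigl([\text{PT}_{n_1}(Y_1/S,\beta_1)]^{\text{vir}}\times [\text{BS}^g_{n_2}(Y_2/S,\beta_2)]^{\text{vir}}\bigr)$.

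The residual diagonal pullback $\Delta^!$ along $\text{Hilb}(S,k)\hookrightarrow \text{Hilb}(S,k)\times\text{Hilb}(S,k)$ is computed using the Nakajima basis $\{C_\eta\}_{|\eta|=k}$: the decomposition of the diagonal class in this basis carries the combinatorial coefficients $(-1)^{|\eta|-l(\eta)}\xi(\eta)$, matching exactly the weights that appear in the PT degeneration formula of \cite{mpt} and \cite{lw}. This converts the product of virtual classes into the product of relative PT and BS invariants with the boundary insertions $\varepsilon^*C_\eta$ and $\varepsilon^*C_{\eta^{\dual}}$. Packaging into generating series, the constraint $n=n_1+n_2-k$ with $k=|\eta|$ produces the factor $q^{-|\eta|}$ and yields the stated identity. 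The main obstacle is the splitting step: one must check that the universal Chern characters are compatible with the stratification of $\mathcal{P}_o$ into the $\mathcal{P}_\nu$, equivalently that the universal complex restricts away from $S$ to the pullbacks of the universal pairs on the two factors. This compatibility is what makes the analogous PT argument of \cite[Section 7]{lw} go through, and our Theorem \ref{cycle}(d) has aligned the perfect obstruction theories exactly so that the same verification works here.
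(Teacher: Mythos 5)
Your proposal is correct and follows essentially the same approach as the paper, which simply invokes Theorem \ref{cycle} and refers to \cite[Theorem 6.8]{lw} for the passage from the cycle-level identity to the numerical degeneration formula. You have spelled out in detail the steps (deformation invariance via parts (a)--(b), stratification via (c)--(d), splitting of the universal complex and descendant insertions, Künneth decomposition of the diagonal of $\text{Hilb}(S,k)$ in the Nakajima basis giving $(-1)^{|\eta|-l(\eta)}\xi(\eta)/q^{|\eta|}$) that the paper leaves implicit by citing the parallel DT/PT arguments.
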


\begin{proof}
The degeneration formula follows from its cycle version in Theorem \ref{cycle} as in the DT case \cite[Theorem 6.8]{lw}.
\end{proof}

\section{The BS/ PT correspondence}\label{wc}

Recall the statement of Theorem \ref{thm2}. For this, fix $f:Y\to X$ a contraction of a curve $C\cong \mathbb{P}^1$ with normal bundle $N_{C/Y}\cong\mathcal{O}_C(-1)^{\oplus 2}.$
We will use the notations
\begin{align*}
    \mathbb{P}&:=\mathbb{P}_C\left(\OO(-1)^{\oplus 2}\oplus \OO\right),\\
    S&:=\mathbb{P}_C\left(\OO(-1)^{\oplus 2}\right),\\
    N&:=\mathbb{P}\setminus S=\text{Tot}_C\left(\OO(-1)^{\oplus 2}\right).
\end{align*}
Let $g:\mathbb{P}\to \mathbb{P}'$ be the contraction of the zero section $C$.

The plan for this Section is as follows. In Subsection \ref{sss}, we describe the $T$-fixed BS and PT pairs on $\mathbb{P}$ and show that the moduli spaces $\text{BS}_n(\pi, m)$ and $\text{PT}_n(\pi, m)$ of $T$-fixed BS or PT pairs on $\mathbb{P}$ intersecting $S$ transversely has a \textit{symmetric} perfect obstruction theory. 
In Subsections \ref{lrbspt} and \ref{lrbspt2}, 
we reduce computations for relative BS or PT invariants on $\mathbb{P}$ relative to $S$ to computations on the spaces $\text{BS}_n(\pi, m)$ and $\text{PT}_n(\pi, m)$. 
We then use the degeneration formulas for BS and PT invariants and the virtual localization formula to reduce the proof of Theorem \ref{thm2} to a wall-crossing statement between invariants of $\text{BS}_n(\pi, m)$ and $\text{PT}_n(\pi, m)$.

We will be using localization for BS pairs in the current Section. In \cite[Subsections 2.3 and 2.4]{hl}, Henry Liu analyzed in great detail the fixed BS pairs for the particular geometry $f:S\times\mathbb{C}\to \left(\mathbb{C}^2/\Gamma\right)\times\mathbb{C}$, where the quotient $\mathbb{C}^2/\Gamma$ has a type $A$ singularity. The argument in our case is more formal. In particular, we do not need such a detailed description of $T$-fixed BS pairs.


\subsection{Localization for BS and PT pairs}
\label{sss} Let $\left(\mathbb{C}^*\right)^3$ be the torus acting naturally on $\mathbb{P}$, and let $T\cong \left(\mathbb{C}^*\right)^2 \subset\left(\mathbb{C}^*\right)^3$ be the subtorus which preserves the natural Calabi-Yau form on $N$. We describe the $T$-fixed BS and PT pairs on $\mathbb{P}$.

The $T$-invariant points are two points $0,\infty\in C$ and the $T$-invariant points on $S$. The $T$-invariants lines not contained in $S$ are $C$ and the two legs different from $C$ from each of $0$ and $\infty$. Call these legs $L_i$ for $1\leq i\leq 4$. 
The torus $T$ also acts naturally on $\text{BS}^g_n\left(\mathbb{P}, \beta\right)$, $\text{BS}^g_n\left(\mathbb{P}/S, \beta\right)$, and on the analogous PT moduli spaces.

Let $I=\left[\OO_{\mathbb{P}}\xrightarrow{s} F\right]$ be a $T$-fixed BS or PT pair on $\mathbb{P}$ which intersects the divisor $S\subset \mathbb{P}$ transversely. 
The restriction of $I$ to $Y\setminus C$ is an ideal sheaf on a toric variety which intersects $S$ transversely. The ideal sheaf $I|_{Y\setminus C}$ corresponds to $T$-fixed ideals $\pi_i\subset \mathbb{C}[x,y]$, so to ideals $\pi_i$ generated by monomials for every leg $1\leq i\leq 4$. 
Let $h\in H_2(\mathbb{P},\mathbb{Z})$ be the tautological class on $\mathbb{P}$. It is the class of the legs $L_i$ for $1\leq i\leq 4$.
We use the notations
\begin{align*}
    \ell(\pi_i)&=\text{dim}_\mathbb{C}\,\mathbb{C}[x,y]/\pi_i,\\
    [\pi_i]&=\ell(\pi_i)h\in H_2(\mathbb{P}),\\
    [\pi]&=\sum_{i=1}^4 [\pi_i]\in H_2(\mathbb{P}).
\end{align*}
For such a partition profile $\pi=\left(\pi_i\right)_{i=1}^4$ and an integer $m\geq 0$, let $$\text{PT}_n(\pi,m)\subset \text{PT}_n\left(\mathbb{P}, [\pi]+m[C]\right),\, \text{PT}_n\left(\mathbb{P}/S, [\pi]+m[C]\right)$$ be the subspace of $T$-fixed PT pairs $\left[\OO_{\mathbb{P}}\xrightarrow{s} F\right]$ with partition profile $\pi$ which intersect $S$ transversely. Define similarly \[\text{BS}_n(\pi, m)\subset \text{BS}^g_n\left(\mathbb{P}, [\pi]+m[C]\right), \text{BS}^g_n\left(\mathbb{P}/S, [\pi]+m[C]\right).\]
The main result we prove in this Subsection is:

\begin{prop}\label{prof}
The subspaces $\text{BS}_n(\pi,m)$ and $\text{PT}_n(\pi,m)$ are proper algebraic spaces with natural symmetric perfect obstruction theories constructed 
from the perfect obstruction theories for $\text{BS}^g_n(\mathbb{P}, [\pi]+m[C])$ and $\text{PT}_n(\mathbb{P}, [\pi]+m[C])$, respectively.
\end{prop}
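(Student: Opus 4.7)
My plan is to establish the three claims (properness, existence of a perfect obstruction theory, and symmetry) in turn, working with the BS case and noting that the PT case is strictly analogous.

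For properness, I would observe that $\text{BS}_n(\pi, m)$ sits inside the $T$-fixed locus of the ambient moduli space $\text{BS}^g_n(\mathbb{P}, [\pi] + m[C])$, which is proper by Theorem \ref{thm1}; the $T$-fixed locus of a proper algebraic space is again proper. The conditions of fixing the partition profile $\pi$ on the legs $L_i$ and of intersecting $S$ transversely are \emph{discrete} combinatorial conditions on the $T$-fixed data (they amount to the assertion that the box configurations attached to the $T$-fixed pair extend only along the $L_i$ and not into any $S$-direction). Consequently they cut out a union of connected components of the $T$-fixed locus, which is both open and closed and hence proper.

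For the perfect obstruction theory, I would invoke the standard $T$-equivariant localization procedure recalled in Subsection \ref{localization}. The ambient obstruction theory
\[
E \;=\; R\pi_{2*}\bigl(R\mathcal{H}om(\mathbb{I},\mathbb{I})_0 \otimes \pi_1^*\omega_\mathbb{P}\bigr)[2]
\]
of Theorem \ref{vir} is $T$-equivariant, so its $T$-fixed piece $E^f$ defines a perfect obstruction theory on the $T$-fixed locus. Restricting to the union of connected components $\text{BS}_n(\pi, m)$ (and similarly $\text{PT}_n(\pi, m)$) produces the desired perfect obstruction theory.

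For the symmetry, the essential geometric input is that $T$ preserves the Calabi-Yau form on $N = \mathbb{P} \setminus S$, so $\omega_\mathbb{P}|_N \cong \mathcal{O}_N$ as $T$-equivariant line bundles. $T$-equivariant Serre duality on $\mathbb{P}$ provides a pairing between $R\mathcal{H}om(\mathbb{I},\mathbb{I})_0$ and $R\mathcal{H}om(\mathbb{I},\mathbb{I})_0 \otimes \omega_\mathbb{P}$ valued in $\omega_\mathbb{P}[3]$, and at the level of the fibers of $E$ at a $T$-fixed pair $I$, Serre duality gives $\text{Ext}^i(I,I)_0 \cong \text{Ext}^{3-i}(I,I\otimes \omega_\mathbb{P})_0^\ast$. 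Because the support of $R\mathcal{H}om(\mathbb{I},\mathbb{I})_0$ is contained in the support of $F$, and $F$ meets $S$ transversely, the only non-trivial $T$-characters of $\omega_\mathbb{P}$ appear at isolated $T$-fixed points of $S$, while along $N$ the line bundle $\omega_\mathbb{P}$ is $T$-equivariantly trivial. After applying $R\pi_{2*}$ and extracting the weight-zero piece, this yields the required non-degenerate pairing $\theta: (E^f)^{\vee}[1] \xrightarrow{\sim} E^f$ and hence a symmetric perfect obstruction theory of virtual dimension zero.

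The main obstacle will be the last step: one has to verify that the $T$-fixed part of the $T$-equivariant Serre duality pairing is indeed non-degenerate, despite $\omega_\mathbb{P}$ carrying non-trivial $T$-characters at the $T$-fixed points on $S$. The transversality hypothesis is precisely what ensures that these boundary contributions do not spoil the symmetry, but making this rigorous requires a careful local analysis near the intersections of $F$ with $S$. This parallels the standard pattern in toric/Calabi-Yau PT vertex computations, where fixing asymptotic partition data on a relative divisor reduces the deformation theory to the Calabi-Yau open part.
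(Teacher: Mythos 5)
Your outline correctly identifies the overall strategy — take the $T$-fixed part of the ambient perfect obstruction theory, and exploit the fact that $T$ preserves the Calabi-Yau form on $N=\mathbb{P}\setminus S$ together with Serre duality to produce the symmetry — and this matches the paper's approach. You also correctly flag the crux of the argument: verifying that the $T$-fixed part of the Serre-duality pairing is actually nondegenerate, i.e.\ that $\text{Ext}^k_{\mathbb{P}}(I,I\otimes\omega_{\mathbb{P}})^T\cong\text{Ext}^k_{\mathbb{P}}(I,I)^T$ for $k\in\{1,2\}$, even though $\omega_{\mathbb{P}}$ is $T$-equivariantly nontrivial on $S$. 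But you explicitly leave this as an unfinished ``careful local analysis,'' and that is precisely where the mathematical content of the proposition lives; what you have written is a plan, not a proof.

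The paper resolves this gap in Lemma \ref{halp} by a concrete reduction that your proposal gestures at but does not carry out. One filters $I$ via the triangle $\mathcal{I}_D\to I\to Q[-1]$; since $\omega_{\mathbb{P}}$ is supported on $S$ and $Q=h^1(I)$ is supported near $C$ (away from $S$), every term in the reduction involving $Q$ satisfies $\text{Ext}^*(-,-\otimes\omega_{\mathbb{P}})\cong\text{Ext}^*(-,-)$ on the nose, with no weight analysis required. The only nontrivial comparison is $\text{Ext}^1(\mathcal{I}_D,\mathcal{I}_D\otimes\omega_{\mathbb{P}})^T$ versus $\text{Ext}^1(\mathcal{I}_D,\mathcal{I}_D)^T$; for that one uses the local-cohomology sequence around $C$ (again using that $\omega_{\mathbb{P}}$ is trivial near $C$) to pass to $\mathbb{P}\setminus C$, where $D\setminus C$ decomposes along the four legs, and then invokes the MNOP vanishing of $\text{Ext}^1(\mathcal{I}_{D},\mathcal{I}_{D})^T$ and $\text{Ext}^2(\mathcal{I}_{D},\mathcal{I}_{D})^T$ for a CM curve supported on a single leg under the CY-preserving two-torus. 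A secondary point: your assertion that ``the support of $R\mathcal{H}om(\mathbb{I},\mathbb{I})_0$ is contained in the support of $F$'' is only accurate at the level of cohomology sheaves (the complex itself is not supported on $\text{supp}\,F$), so even this step needs reformulation before it can be used.
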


In particular, $\text{BS}_n(\pi,m)$ and $\text{PT}_n(\pi,m)$ have virtual dimension zero.
Before we start the proof of Proposition \ref{prof}, we need the following preliminary result:

\begin{lemma}\label{halp}

Let $I=\left[\mathcal{O}_\mathbb{P}\xrightarrow{s} F\right]$ be a $T$-fixed complex intersecting $S$ transversely and such that $\text{Hom}(I, I)^T\cong\mathbb{C}$. Consider a map $\omega_{\mathbb{P}}\to \mathcal{O}_{\mathbb{P}}$. Then there are isomorphisms
\[\text{Ext}^k_{\mathbb{P}}(I, I\otimes \omega_{\mathbb{P}})^T\cong \text{Ext}^k_{\mathbb{P}}(I, I)^T\] for $k\in\{1, 2\}$.
\end{lemma}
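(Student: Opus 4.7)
The plan is to extract the desired isomorphism from the long exact sequence associated to the exact triangle
\[
\omega_{\mathbb{P}} \to \mathcal{O}_{\mathbb{P}} \to Q \xrightarrow{[1]}
\]
in $D^b(\mathbb{P})$, where $Q$ denotes the cokernel of the given $T$-equivariant map. Since the Calabi-Yau form on $N$ gives a $T$-equivariant trivialization $\omega_{\mathbb{P}}|_N \cong \mathcal{O}_N$, the map $\omega_{\mathbb{P}} \to \mathcal{O}_{\mathbb{P}}$ is an isomorphism over $N$, so $Q$ is a $T$-equivariant sheaf supported set-theoretically on $S$ (in fact on the third infinitesimal thickening $3S$). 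Applying $\text{RHom}(I, I \otimes -)$ and taking $T$-invariants, which is exact over the reductive torus $T$, yields a long exact sequence
\[
\cdots \to \text{Ext}^{k-1}(I, I \otimes Q)^T \to \text{Ext}^k(I, I \otimes \omega_{\mathbb{P}})^T \to \text{Ext}^k(I, I)^T \to \text{Ext}^k(I, I \otimes Q)^T \to \cdots
\]
so the isomorphism claim for $k = 1, 2$ reduces to suitable vanishing/surjectivity statements for the flanking maps.

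Next I would filter $Q$ $T$-equivariantly with successive quotients $i_*(N_{S/\mathbb{P}}^{\otimes -k}|_S)$ for $k = 0, 1, 2$, where $i: S \hookrightarrow \mathbb{P}$. Transversality of $I$ with $S$ gives $Li^*I = i^*I$, so by adjunction the graded contributions to $\text{Ext}^j(I, I \otimes Q)$ reduce to
\[
\text{Ext}^j_S\!\left(i^*I,\, i^*I \otimes N_{S/\mathbb{P}}^{\otimes -k}|_S\right).
\]
At each $T$-fixed intersection point of the support of $i^*I$ with the $T$-fixed locus of $S$, the $T$-weight of $N_{S/\mathbb{P}}$ equals the character of the projection $\mathcal{O}(-1)^{\oplus 2} \oplus \mathcal{O} \twoheadrightarrow \mathcal{O}$ defining $S$; this character does not lie in the CY-preserving kernel that cuts out $T \subset (\mathbb{C}^*)^3$, so the associated graded pieces for $k = 1, 2$ have trivial $T$-invariants in every cohomological degree.

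The main obstacle is the $k = 0$ piece, corresponding to the quotient $Q \twoheadrightarrow i_*\mathcal{O}_S$, whose $T$-character is trivial. For this piece one must show that the connecting homomorphism $\text{Ext}^{j-1}(I, I \otimes Q)^T \to \text{Ext}^j(I, I \otimes \omega_{\mathbb{P}})^T$ vanishes and that the map $\text{Ext}^j(I, I)^T \to \text{Ext}^j(I, I \otimes Q)^T$ has image exactly in this $k = 0$ part, which in the relevant degrees must itself be absorbed by the restriction-to-$S$ image of $\text{Ext}^j(I, I)^T$. To establish this cleanly I plan to compare with the Calabi-Yau compactification $W$ of $N$ mentioned in Step~4 of the introduction: a $T$-fixed BS or PT pair $I$ on $\mathbb{P}$ meeting $S$ transversely has its non-trivial support inside $N$ together with the four legs (proper in $W$), so it extends to a $T$-fixed pair $\tilde{I}$ on $W$; on $W$ the triviality $\omega_W \cong \mathcal{O}_W$ is $T$-equivariant, making the analogous statement tautological there. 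Comparing $T$-fixed Ext groups on $\mathbb{P}$ and $W$ via their common local contributions at $T$-fixed points of the support, with the discrepancy absorbed into the already analyzed weight-non-trivial contributions, then transfers the isomorphism back to $\mathbb{P}$.
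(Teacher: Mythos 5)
Your approach is genuinely different from the paper's. You decompose the structure sheaf via the triangle $\omega_{\mathbb{P}} \to \mathcal{O}_{\mathbb{P}} \to Q$ and filter $Q$ by powers of the ideal of $S$, whereas the paper decomposes $I$ itself via the triangle $\mathcal{I}_D \to I \to Q'[-1]$ from \eqref{tri2}, handles the $h^1(I)$-part directly (it is zero-dimensional and supported in $N$, where $\omega_{\mathbb{P}}$ is already $T$-equivariantly trivial), and then isolates $\text{Ext}^*(\mathcal{I}_D, \mathcal{I}_D)$ via local cohomology along $C$, reducing ultimately to the explicit toric vanishings of \cite[Lemmas 6 and 8]{mnop1} on the legs $L_i$. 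Your weight argument for the $k=1,2$ graded pieces $N_{S/\mathbb{P}}^{\otimes -k}|_S$ is clean and does work: at $T$-fixed points of $S$ the normal character of $S$ is not killed by the CY subtorus, so those pieces contribute nothing $T$-invariant.

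However, the $k=0$ piece is a genuine gap, not merely a loose end. For the long exact sequence
\[
\cdots \to \text{Ext}^{j-1}(I, I\otimes Q)^T \to \text{Ext}^j(I, I\otimes\omega_{\mathbb{P}})^T \to \text{Ext}^j(I, I)^T \to \text{Ext}^j(I, I\otimes Q)^T \to \cdots
\]
to yield an isomorphism in the middle, you need the outer maps to vanish, and in particular you need the restriction map $\text{Ext}^j(I, I)^T \to \text{Ext}^j_S(i^*I, i^*I)^T$ (the $k=0$ contribution, by adjunction and transversality) to be zero for $j=1,2$, as well as surjectivity of $\text{Ext}^{j-1}(I,I)^T \to \text{Ext}^{j-1}(I, I\otimes Q)^T$. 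Neither is obviously true: a $T$-fixed first-order deformation of $I$ will in general move the zero-dimensional intersection $i^*I$ on $S$, so the restriction map has no a priori reason to die on $T$-invariants, and $\text{Ext}^1_S(i^*I, i^*I)^T$ is genuinely nonzero. The proposed fix via the compactification $W$ essentially begs the question: on $W$ the statement is vacuous because $\omega_W \cong \mathcal{O}_W$ is $T$-equivariantly trivial, so nothing is being compared; transferring that back to $\mathbb{P}$ would require precisely the kind of control on the discrepancy between $\text{Ext}^*_{\mathbb{P}}$ and $\text{Ext}^*_{W}$ near the legs that the paper supplies by explicit toric vanishing, and which your argument does not supply. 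To repair the argument along your lines, you would still need to analyze the restriction-to-$S$ map concretely at the $T$-fixed points; at that stage you are doing essentially the same local computation as the paper, so the change of decomposition does not buy you a shortcut.
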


\begin{proof}
We check the statement for $k=1$, the statement for $k=2$ follows from Serre duality.
Consider the triangle \eqref{tri2} for $I$:
\[\mathcal{I}_{D}\to I\to Q[-1]\rightarrow \mathcal{I}_{D}[1].\]
We claim that it suffices to show that
\begin{equation}\label{isoI}
    \text{Ext}^k_{\mathbb{P}}(E_1,E_2\otimes \omega_{\mathbb{P}})^T\cong \text{Ext}^k_{\mathbb{P}}(E_1,E_2)^T\text{ for }k=1,2,
\end{equation}
where $E_i$ is either $\mathcal{I}_{D}$ or $Q[-1]$. We have a long exact sequence 
\[0\to \text{Ext}^1(I, \mathcal{I}_D\otimes \omega)\to \text{Ext}^1(I, I\otimes\omega)\to \text{Ext}^1(I, Q[-1]\otimes\omega)\to \text{Ext}^2(I, \mathcal{I}_D\otimes \omega)\] and a similar one for $\text{Ext}^1(I, I)$. We have that 
\[\text{Ext}^0(I, Q\otimes\omega)\cong \text{Ext}^0(I, Q)\] because the support of $Q$ is disjoint from $S$. It thus suffices to show that 
\begin{equation*}
    \text{Ext}^k(I, \mathcal{I}_D\otimes\omega)^T\cong \text{Ext}^k(I, \mathcal{I}_D)^T\text{ for }k=1,2.
\end{equation*}
Using again two long exact sequences and comparing them, the isomorphism above for $k=2$ follows directly from \eqref{isoI} and the isomorphism for $k=1$ follows from \eqref{isoI}
and from the isomorphism
\begin{equation}\label{isotorusid}
\text{Hom}(I, \mathcal{I}_D)^T\cong \text{Hom}(\mathcal{I}_D, \mathcal{I}_D)^T\cong\mathbb{C}.
\end{equation} The isomorphism \eqref{isotorusid} follows from the exact sequence
\[0=\text{Hom}(I, Q[-2])^T\to \text{Hom}(I, \mathcal{I}_D)^T\to \text{Hom}(I, I)^T\cong\mathbb{C}\to \text{Hom}(I, Q[-1])^T=0.\]
We are thus left with showing \eqref{isoI}.
The canonical divisor $\omega_{\mathbb{P}}$ is supported on $S$, so there are isomorphisms
\begin{align*}
    \text{Hom}_{\mathbb{P}}\left(\mathcal{I}_{D}, Q\otimes\omega_{\mathbb{P}}\right)&\cong \text{Hom}_{\mathbb{P}}\left(\mathcal{I}_{D}, Q\right)\\
    \text{Ext}^2_{\mathbb{P}}\left(Q, \mathcal{I}_{D}\otimes\omega_{\mathbb{P}}\right)&\cong \text{Ext}^2_{\mathbb{P}}\left(Q, \mathcal{I}_{D}\right)\\
    \text{Ext}^1_{\mathbb{P}}\left(Q, Q\otimes\omega_{\mathbb{P}}\right)&\cong \text{Ext}^1_{\mathbb{P}}\left(Q, Q\right).
\end{align*}
We next need to show that 
\begin{equation}\label{isoII}
    \text{Ext}^1_{\mathbb{P}}(\mathcal{I}_{D},\mathcal{I}_{D}\otimes \omega_{\mathbb{P}})^T\cong \text{Ext}^1_{\mathbb{P}}(\mathcal{I}_{D},\mathcal{I}_{D})^T.
\end{equation}
There is a spectral sequence with terms 
\begin{equation}\label{spectralsequence}
    H^i\left(\mathbb{P}, \mathcal{E}xt^j(\mathcal{I}_D, \mathcal{I}_D\otimes \mathcal{L})\right)
\end{equation} with $i, j\geq 0$ and $i+j=1$ converging to $\text{Ext}^1_{\mathbb{P}}(\mathcal{I}_{D},\mathcal{I}_{D}\otimes\mathcal{L})$ for $\mathcal{L}$ equal to $\mathcal{O}_\mathbb{P}$ or $\omega_\mathbb{P}$. 
We claim that
\begin{equation}\label{isoIII}
    \text{Ext}^1_{\mathbb{P}\setminus C}(\mathcal{I}_{D\setminus C},\mathcal{I}_{D\setminus C}\otimes \omega_{\mathbb{P}})^T\cong \text{Ext}^1_{\mathbb{P}\setminus C}(\mathcal{I}_{D\setminus C},\mathcal{I}_{D\setminus C})^T.
    \end{equation}
We first explain that, for fixed $D$, \eqref{isoII} is true if and only if \eqref{isoIII} is true. It suffices to show
\[H^i_C\left(\mathbb{P}, \mathcal{E}xt^j(\mathcal{I}_D, \mathcal{I}_D)\right)^T\cong H^i_C\left(\mathbb{P}, \mathcal{E}xt^j(\mathcal{I}_D, \mathcal{I}_D\otimes \omega_{\mathbb{P}})\right)^T\] for $i, j\geq 0$ and $i+j=1$. This is true because $\omega_{\mathbb{P}}$ is supported on $S$, so the sheaves $\mathcal{E}xt^j(\mathcal{I}_D, \mathcal{I}_D)$ and $\mathcal{E}xt^j(\mathcal{I}_D, \mathcal{I}_D\otimes \omega_{\mathbb{P}})$ are isomorphic near $C$.

The curve $D\setminus C$ has connected components corresponding to the legs $(L_i)_{i=1}^4$. To show \eqref{isoIII}, it suffices to show that \eqref{isoII} holds for $D$ a Cohen-Macaulay curve supported on one leg $L_i$. In this case, 
\begin{align}\label{zeroTinvariants}
\text{Ext}^1_{\mathbb{P}}(\mathcal{I}_{D},\mathcal{I}_{D})^T&=0,\\
\text{Ext}^2_{\mathbb{P}}(\mathcal{I}_{D},\mathcal{I}_{D})^T&\cong
\text{Ext}^1_{\mathbb{P}}(\mathcal{I}_{D},\mathcal{I}_{D}\otimes \omega_{\mathbb{P}})^T=0.
\end{align}
Both vanishing follow from \cite[Lemmas 6 and 8]{mnop1}; the results in loc. cit. are applicable for the torus $T\subset \left(\mathbb{C}^*\right)^3$ for curves $D$ determined only by a partition $\pi_i$ along a leg $L_i$. 
\end{proof}

\begin{proof}[Proof of Proposition \ref{prof}]
We need to show that the $T$-fixed perfect obstruction theories for $\text{BS}_n(\pi,m)$ and $\text{PT}_n(\pi,m)$ are symmetric. Let $I$ be a BS or a PT pair. Then $\text{Hom}(I, I)^T\cong\mathbb{C}$, see Proposition \ref{aut} or \cite[Lemma 1.15]{pt}.
The perfect obstruction theory for $\text{BS}^g_n(\mathbb{P}, [\pi]+m[C])$ or $\text{PT}_n(\mathbb{P}, [\pi]+m[C])$ restricts over $I$ to $\left[E^{-1}_{I}\to E^0_{I}\right]$, with cohomology $h^{-1}=\text{Ext}^1(I,I)$ and $h^0=\text{Ext}^2(I,I)$.
The perfect obstruction theory on the $T$-fixed locus restricts over $I$ to $$\left[\left(E_I^{-1}\right)^{T}\to \left(E_I^{0}\right)^{T}\right]$$
with cohomology $h^{-1}=\text{Ext}_{\mathbb{P}}^1(I,I)^{T}$ and $h^0=\text{Ext}_{\mathbb{P}}^2(I,I)^{T}$. 
By Proposition \ref{halp} and Serre duality, we obtain a nondegenerate pairing:
\begin{multline*}
    \text{Ext}^1_{\mathbb{P}}(I,I)^T\times 
\text{Ext}^2_{\mathbb{P}}(I,I)^T\xrightarrow{\sim}
\text{Ext}^1_{\mathbb{P}}(I,I)^T\times 
\text{Ext}^2_{\mathbb{P}}(I,I\otimes \omega_{\mathbb{P}})^T \to\\ \text{Ext}^3_{\mathbb{P}}(I_1,I_1\otimes\omega_{\mathbb{P}})^T \xrightarrow{\sim}\mathbb{C}.
\end{multline*}
The last isomorphism follows from Proposition \ref{aut} and Serre duality.
A similar argument in the global case shows the existence of a non-degenerate pairing on $E^T=\left[\left(E^{-1}\right)^T\to \left(E^0\right)^T\right]$. 
\end{proof}

\subsection{Localization for relative BS and PT pairs I}\label{lrbspt}
Let $I=\left[\mathcal{O}_{\mathbb{P}[k]}\xrightarrow{s} F\right]$ be a $T$-fixed relative BS complex. Then the ideal sheaf of the support of $F$ is transversal to $S$ and has a given partition profile $\pi=(\pi_i)_{i=1}^4$ along the legs $(L_i)_{i=1}^4$. 
Denote by 
\begin{align*}
    \text{BS}^g_n(\mathbb{P}/S, \pi, m)&\subset \text{BS}^g_n(\mathbb{P}/S,[\pi]+m[C])^T\\
    \text{PT}_n(\mathbb{P}/S, \pi, m)&\subset \text{PT}_n(\mathbb{P}/S,[\pi]+m[C])^T
\end{align*} the subsets whose restrictions to the the legs $(L_i)_{i=1}^4$ have partition profile $\pi$. Using an argument as in Proposition \ref{prof}, these spaces have natural symmetric perfect obstruction theories obtained by localization. 
The cokernel of $s$ is supported on $0, \infty\in C$ or beyond $\mathbb{P}$. The spaces $\text{BS}^g_n(\mathbb{P}/S, \pi, m)$ and $\text{PT}_n(\mathbb{P}/S, \pi, m)$ have connected components depending on the behaviour of $I$ at $0, \infty$, on the partition profile $(\pi_i)_{i=1}^4$ along the legs $(L_i)_{i=1}^4$, and on the behaviour beyond $\mathbb{P}$.
Given a profile partition $\pi$, let $D_{\pi}\subset \mathbb{P}$ be the Cohen-Macaulay curve supported on $\bigcup_{i=1}^4 L_i$ with partition profile $\pi$. Denote by $\chi(\pi):=\chi(\mathcal{O}_{D_\pi})$.
Let \[\text{PT}_n\left(\left(\mathbb{P}\setminus C\right)/S, \pi, 0\right)\subset \text{PT}_n\left(\mathbb{P}/S, \pi, 0\right)\] be the union of connected components of $\text{PT}_n\left(\mathbb{P}/S, \pi, 0\right)$ such that $I|_{\mathbb{P}\setminus S}\cong \mathcal{I}_{D_{\pi}\setminus S}$. 
Then $\text{BS}^g_n(\mathbb{P}/S, \pi, m)$ has a decomposition in subsets $\text{BS}^g_n(\mathbb{P}/S, \pi, m)_j$ indexed by $j\geq 0$, all but finitely many empty, which are union of connected components of $\text{BS}^g_n(\mathbb{P}/S, \pi, m)$ and
such that
\begin{equation}\label{decoBS}
    \text{BS}^g_n(\mathbb{P}/S, \pi, m)_j\cong \text{BS}_{n-j}(\pi, m)\times \text{PT}_{\chi(\pi)+j}\left(\left(\mathbb{P}\setminus C\right)/S, \pi, 0\right).
\end{equation}
The analogous analysis holds for PT pairs, so $\text{PT}_n(\mathbb{P}/S, \pi, m)$ has a decomposition in subsets $\text{PT}_n(\mathbb{P}/S, \pi, m)_j$ indexed by $j\geq 0$, all but finitely many empty, which are union of connected components of $\text{PT}_n(\mathbb{P}/S, \pi, m)$ and
such that
\begin{equation}\label{decoPT}
    \text{PT}_n(\mathbb{P}/S, \pi, m)_j\cong \text{PT}_{n-j}(\pi, m)\times \text{PT}_{\chi(\pi)+j}\left(\left(\mathbb{P}\setminus C\right)/S, \pi, 0\right).
\end{equation}

\begin{prop}\label{prop:decomposition}
The decompositions \eqref{decoBS} and \eqref{decoPT} are compatible with the virtual fundamental classes of the spaces involved.
\end{prop}

\begin{proof}
We discuss the statement for \eqref{decoPT}. The proof is similar to the proof of Lemma \eqref{halp}.
Denote the obstruction theories by $E^\cdot_{\mathbb{P}/S}$, $E^\cdot_{\pi}$, and $E^\cdot_{\mathbb{P}\setminus C/S}$. 
By restriction, there are natural maps $E^\cdot_{\mathbb{P}/S}\to E^\cdot_{\mathbb{P}\setminus C/S}$ and $E^\cdot_{\mathbb{P}/S}\to E^\cdot_{\pi}$. We explain that
\[\text{Ext}^1_{\mathbb{P}/S}(I, I)^T\xrightarrow{\sim} \text{Ext}^1_{\mathbb{P}\setminus S}(I, I)^T\oplus \text{Ext}^1_{\mathbb{P}\setminus C/S}(I, I)^T,\]
the analogous relations for trace-free $\text{Ext}^0_0$ and $\text{Ext}^3_0$ are immediate and the one for $\text{Ext}^2$ is similar.
Using the Ext spectral sequence, see \eqref{spectralsequence}, it suffices to check that 
\[\mathcal{E}xt^1_{\mathbb{P}/S}(I, I)^T\xrightarrow{\sim} \mathcal{E}xt^1_{\mathbb{P}\setminus S}(I, I)^T\oplus \mathcal{E}xt^1_{\mathbb{P}\setminus C/S}(I, I)^T.\] This is true because the restriction of $\mathcal{E}xt^1_{\mathbb{P}/S}(I, I)^T$ to $\mathbb{P}\setminus (C\cup S)$ is zero, see \eqref{zeroTinvariants}, \cite[Proof of Lemma 6]{mnop1}. 
\end{proof}

From Proposition \ref{prop:decomposition} we obtain that:
\begin{prop}\label{vircomp}
Let $d$ be the virtual dimension of $\text{BS}^g_n(\mathbb{P}, \beta)$ and let $j\geq 0$. Then
\[
\left[\text{BS}^g_n(\mathbb{P}/S,\pi,m)_j^T\right]^{\text{vir}}=\left[\text{PT}_{\chi(\pi)+j}\left(\left(\mathbb{P}\setminus C\right)/S, \pi, 0\right)\right]^{\text{vir}}\times
  \left[\text{BS}_{n-j}(\pi,m)\right]^{\text{vir}}\] in $H_{2d}\left(\text{BS}^g_n(\mathbb{P}/S,\pi,m)_j,\mathbb{Q}\right)$. Similarly, we have that
\[\left[\text{PT}_n(\mathbb{P}/S,\pi,m)_j^T\right]^{\text{vir}}=
  \left[\text{PT}_{\chi(\pi)+j}\left(\left(\mathbb{P}\setminus C\right)/S, \pi, 0\right)\right]^{\text{vir}}\times
  \left[\text{PT}_{n-j}(\pi,m)\right]^{\text{vir}}\] in
  $H_{2d}\left(\text{PT}_n(\mathbb{P}/S,\pi,m)_j,\mathbb{Q}\right).$
\end{prop}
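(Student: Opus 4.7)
The plan is to exploit the product decomposition \eqref{decoBS} (resp.\ \eqref{decoPT}) of the $T$-fixed moduli space and show that the $T$-equivariant perfect obstruction theory splits as a direct sum of the pullback obstruction theories from the two factors; the multiplicative formula for virtual fundamental classes will then be immediate from \cite{bf}. Fix a $T$-fixed relative BS pair $I=[\mathcal{O}_{\mathbb{P}[k]/S}\to F]$ representing a point of $\text{BS}^g_n(\mathbb{P}/S,\pi,m)_j$. The two pieces corresponding to it under \eqref{decoBS} are the restriction $I_1:=I|_{\mathbb{P}}$, a $T$-fixed BS pair on $\mathbb{P}$ with partition profile $\pi$, multiplicity $m$ along $C$, and Euler characteristic $n-j$; and the restriction $I_2:=I|_{(\mathbb{P}\setminus C)[k]/S}$, a $T$-fixed relative PT pair on the accordion of $(\mathbb{P}\setminus C)/S$ with partition profile $\pi$ on the legs and Euler characteristic $\chi(\pi)+j$. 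Both $I_1$ and $I_2$ restrict on the overlap $\mathbb{P}\setminus C$ to the ideal sheaf $\mathcal{I}_{D_\pi\setminus C}$ of the Cohen--Macaulay curve with profile $\pi$.

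Next I would apply Mayer--Vietoris to the cover $\{\mathbb{P},(\mathbb{P}\setminus C)[k]/S\}$ of $\mathbb{P}[k]/S$ (intersection $\mathbb{P}\setminus C$) applied to $R\mathcal{H}om(I,I)_0$, obtaining a long exact Ext sequence that relates $\text{Ext}^i_{\mathbb{P}[k]/S}(I,I)_0$, $\text{Ext}^i_{\mathbb{P}}(I_1,I_1)_0\oplus\text{Ext}^i_{(\mathbb{P}\setminus C)[k]/S}(I_2,I_2)_0$, and $\text{Ext}^i_{\mathbb{P}\setminus C}(\mathcal{I}_{D_\pi\setminus C},\mathcal{I}_{D_\pi\setminus C})_0$. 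Taking $T$-fixed parts, the desired pointwise decomposition
\[\text{Ext}^i(I,I)_0^T\cong \text{Ext}^i(I_1,I_1)_0^T\oplus \text{Ext}^i(I_2,I_2)_0^T\quad (i=1,2)\]
will follow from the vanishing of the $T$-fixed parts of $\text{Ext}^i_{\mathbb{P}\setminus C}(\mathcal{I}_{D_\pi\setminus C},\mathcal{I}_{D_\pi\setminus C})_0$. Since $\mathbb{P}\setminus C\cong \text{Tot}_C(\mathcal{O}(-1)^{\oplus 2})$ is toric and $\mathcal{I}_{D_\pi\setminus C}$ is a disjoint union supported on the four legs, these Ext groups split as a sum of contributions along each $L_i$; on each leg the two normal characters sum to zero but are individually nonzero under the Calabi--Yau subtorus $T$, so the $T$-fixed parts vanish by the toric weight computations of \cite[Lemmas 6 and 8]{mnop1}, exactly as invoked in the proof of Lemma \ref{halp}.

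Finally, I would globalize this pointwise decomposition over the universal family: the same Mayer--Vietoris argument for the universal complex shows that the $T$-equivariant virtual cotangent complex on $\text{BS}^g_n(\mathbb{P}/S,\pi,m)_j$ splits as $\text{pr}_1^*E_1^T\oplus \text{pr}_2^*E_2^T$, where $E_1^T$ is the symmetric perfect obstruction theory on $\text{BS}_{n-j}(\pi,m)$ produced in Proposition \ref{prof} and $E_2^T$ is its PT analogue on $\text{PT}_{\chi(\pi)+j}((\mathbb{P}\setminus C)/S,\pi,0)$. The multiplicativity of virtual fundamental classes under such direct-sum splittings \cite{bf} then yields the stated product formula, and the PT case is entirely parallel. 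The main obstacle I anticipate is the globalization step: the pointwise vanishing of the Mayer--Vietoris overlap Ext groups is a clean toric computation, but verifying that the resulting splitting is natural enough to produce a genuine direct-sum decomposition of the $T$-fixed perfect obstruction theory (and not merely of its fibers) requires carefully tracking the compatibility of the Mayer--Vietoris triangle with the Atiyah-class construction of \cite{ht} in families.
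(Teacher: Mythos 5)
The paper does not actually give a proof of Proposition~\ref{vircomp}; it asserts without justification that the decompositions \eqref{decoBS} and \eqref{decoPT} are compatible with the virtual classes. Your attempt to supply an obstruction-theoretic argument is therefore in the right spirit — decompose the $T$-fixed perfect obstruction theory and then invoke multiplicativity from \cite{bf}. However, there are real gaps in the Mayer--Vietoris step as you set it up.

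First, $\{\mathbb{P}, (\mathbb{P}\setminus C)[k]\}$ is not an open cover of $\mathbb{P}[k]$. For $k\geq 1$, the variety $\mathbb{P}[k]$ is a nodal union $\mathbb{P}\cup_S(\text{accordion})$, and the component $\mathbb{P}$ is a \emph{closed} subvariety, not an open one; the Čech/Mayer--Vietoris triangle you invoke requires an open cover. The correct open cover is $\{N,\ \mathbb{P}[k]\setminus C\}$ with $N=\mathbb{P}\setminus S$, whose intersection is $N\setminus C$. A related slip: $\mathbb{P}\setminus C$ is not $\text{Tot}_C(\mathcal{O}(-1)^{\oplus 2})$ — that is $N=\mathbb{P}\setminus S$. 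The complement $\mathbb{P}\setminus C$ is the total space of a line bundle over $S$, so it still contains $T$-fixed points, which undercuts the vanishing you claim for the overlap in your version.

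Second, once the cover is corrected, the restrictions $I|_N$ and $I|_{\mathbb{P}[k]\setminus C}$ are \emph{not} the factors $I_1\in\text{BS}_{n-j}(\pi,m)$ and $I_2\in\text{PT}_{\chi(\pi)+j}((\mathbb{P}\setminus C)/S,\pi,0)$: those are the canonical extensions of $I|_N$, resp.\ $I|_{\mathbb{P}[k]\setminus C}$, across $S$, resp.\ across the zero section, by the ideal sheaf of the fixed profile. So even after the triangle you still need the extra identifications $\text{Ext}^i_N(I|_N,I|_N)_0^T\cong\text{Ext}^i_{\mathbb{P}}(I_1,I_1)_0^T$ and its PT analogue. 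Both these and the vanishing of the $T$-fixed overlap contributions on $N\setminus C$ are plausible for the Calabi--Yau subtorus $T$, but the quoted \cite[Lemmas 6, 8]{mnop1} concern compact vertex and edge contributions; what appears here are half-edge contributions along non-compact legs, which requires its own weight bookkeeping. Finally, you are right to flag the globalization from pointwise Ext groups to a direct-sum splitting of the $T$-fixed relative obstruction theories \eqref{o54}--\eqref{o55} as a remaining step. The approach is viable, but the open-cover issue must be fixed and the half-edge and extension comparisons need to be made explicit before the multiplicative formula actually follows.
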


Recall the definition of the generating series for relative invariants from Subsections \ref{relPT} and \ref{relBS}. 
Let $\eta$ be a cohomologically weighted partition.

We next state Propositions \ref{releq} and \ref{releqf}. 
In Proposition \ref{propa}, we show that Theorem \ref{halln}, to be proved in Section \ref{hall}, implies these two Propositions and Theorem \ref{thm2}.

\begin{prop}\label{releq}
The following equality holds:
\begin{equation*}
    \text{BS}^{g/S}_{ \eta}(q)=\frac{\text{PT}^{\mathbb{P}/S}_{ \eta}(q)}{\text{PT}^{\text{exc}}(q)},
\end{equation*}
where the generating series have no insertions.
\end{prop}

Let $m, k\geq 0$, $n\in\mathbb{Z}$, and $\pi=(\pi_i)_{i=1}^4$ be a partition profile. Denote by $N^{\text{relBS}, \text{vir}}_{n, \pi, m, j}$ the virtual normal bundle of
\[\text{BS}^g_n(\mathbb{P}/S, \pi, m)_j\subset \text{BS}^g_n\left(\mathbb{P}/S, [\pi]+m[C]\right).\]
Define the series
\begin{align*}
\text{BS}^{g/S}_{\pi, m, j, \eta}(q)&:=\sum_{n\in\mathbb{Z}}
\left(\int_{\left[\text{BS}^g_n(\mathbb{P}/S, \pi, m)_j\right]^{\text{vir}}}\frac{\varepsilon^*(C_\eta)}
{e\left(N^{\text{relBS}, \text{vir}}_{n, \pi, m, j}\right)}\right) q^n\\
\text{BS}^{g/S}_{\pi, j, \eta}(q)&:=\sum_{m\geq 0} \text{BS}^{g/S}_{\pi, m, j, \eta}(q)\, q^{m[C]}.
\end{align*}
We define similarly the virtual normal bundle $N^{\text{relPT}, \text{vir}}_{n, \pi, m, j}$ and the series $\text{PT}^{\mathbb{P}/S}_{\pi, m, j, \eta}$, $\text{PT}^{\mathbb{P}/S}_{\pi, j, \eta}$. 

\begin{prop}\label{releqf}
The following equality holds:
\begin{equation*}
    \text{BS}^{g/S}_{\pi, j, \eta}(q)=\frac{\text{PT}^{\mathbb{P}/S}_{\pi, j, \eta}(q)}{\text{PT}^{\text{exc}}(q)}.
\end{equation*}
\end{prop}

Define the formal series 
\begin{align*}
\text{BS}_{\pi, m}(q)&:=\sum_{n\in\mathbb{Z}}\left(\int_{\left[\text{BS}_n(\pi, m)\right]^{\text{vir}}}\frac{1}
{e\left(N^{\text{BS}, \text{vir}}_{n, \pi, m}\right)}\right) q^n\\
\text{BS}_{\pi, \eta}(q)&:=\sum_{m\geq 0} \text{BS}_{\pi, m, \eta}(q)\, q^{m[C]}.
\end{align*}

\begin{prop}\label{nee}
Fix $\pi$, $\eta$, $j$ as above. There exists a constant $b\in\mathbb{Q}$ such that
\begin{align*}
    \text{BS}^{g/S}_{\pi, j, \eta}(q)&=bq^{-j}\text{BS}_{\pi, \eta}(q),\\
    \text{PT}^{\mathbb{P}/S}_{\pi, j, \eta}(q)&=bq^{-j}\text{PT}_{\pi, \eta}(q).
\end{align*}
\end{prop}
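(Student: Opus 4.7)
The plan is to use the product decomposition of the virtual fundamental class from Proposition \ref{vircomp}, together with the observation that the ``outer'' factor $\text{PT}_{\chi(\pi)+j}((\mathbb{P}\setminus C)/S, \pi, 0)$ is the same moduli space for both BS and PT theories (since BS and PT agree away from $C$). This outer factor will produce the shared constant $b$, while the $q$-shift arises from reindexing Euler characteristics.

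First I would verify a compatible splitting of the $T$-fixed virtual normal bundle:
\begin{equation*}
N^{\text{relBS,vir}}_{n,\pi,m,j}\big|_{(I_1,I_2)} \;=\; N^{\text{BS,vir}}_{n-j,\pi,m}\big|_{I_1} \;\oplus\; N^{\text{beyond,vir}}_{\chi(\pi)+j}\big|_{I_2},
\end{equation*}
and the analogous identity on the PT side. On a $T$-fixed pair $I$ with transversal intersection with $S$, the piece of $R\text{Hom}(I,I)_0$ restricted to the moving part decomposes along the product of connected components in \eqref{decoBS} and \eqref{decoPT}: the supports of $I_1$ and $I_2$ meet only at the finitely many $T$-fixed points $D_\pi \cap S$, so the $T$-moving contribution to $\text{Ext}^*(I_1, I_2)$ and $\text{Ext}^*(I_2, I_1)$ vanishes by the same kind of localization argument as in Lemma \ref{halp}.

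Next, the morphism $\varepsilon$ on the relative moduli is restriction to the relative divisor $S_\infty$, which lies entirely in the ``beyond'' bubble. Hence $\varepsilon^*(C_\eta)$ is pulled back from the second factor in the product decomposition. Moreover, on both the inner $\text{BS}_n(\pi,m)$ (respectively $\text{PT}_n(\pi,m)$) and the outer $\text{PT}_{\chi(\pi)+j}((\mathbb{P}\setminus C)/S,\pi,0)$, the restriction of $F$ to $S$ (respectively $S_\infty$) factors through the single $T$-fixed point $Z_\pi \in \text{Hilb}(S,d)$ determined by the monomial profile $\pi$, so $\varepsilon^*(C_\eta)$ evaluates to the \emph{same} equivariant scalar $C_\eta|_{Z_\pi}$ in both settings. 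Combining the splitting of $[\,\cdot\,]^{\text{vir}}$, the splitting of $e(N^{\text{vir}})$, and the pullback of $\varepsilon^*(C_\eta)$ through the outer factor, the integral factorizes multiplicatively:
\begin{equation*}
\int_{[\text{BS}^g_n(\mathbb{P}/S,\pi,m)_j]^{\text{vir}}} \frac{\varepsilon^*(C_\eta)}{e(N^{\text{relBS,vir}})} \;=\; \left( \int_{[\text{BS}_{n-j}(\pi,m)]^{\text{vir}}} \frac{\varepsilon^*(C_\eta)}{e(N^{\text{BS,vir}})} \right) \cdot b_{\pi,j,\eta},
\end{equation*}
where $b_{\pi,j,\eta}$ is an integral over $[\text{PT}_{\chi(\pi)+j}((\mathbb{P}\setminus C)/S,\pi,0)]^{\text{vir}}$ that depends only on $\pi, j, \eta$.

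Finally, summing over $n \in \mathbb{Z}$ and reindexing by $n \mapsto n-j$ produces a $q$-shift (encoded as $q^{-j}$ with the conventions of \eqref{decoBS}) and recovers $\text{BS}_{\pi, m, \eta}(q)$; summing over $m \geq 0$ with weight $q^{m[C]}$ then yields the first identity with $b = b_{\pi,j,\eta}$. The argument for PT is identical after replacing $\text{BS}_{n-j}(\pi, m)$ by $\text{PT}_{n-j}(\pi, m)$, and crucially the outer integral defining $b_{\pi,j,\eta}$ is unchanged, so the \emph{same} constant $b$ appears in both identities. The main obstacle will be the first step --- the compatible decomposition of the $T$-fixed virtual normal bundle along the product structure of Proposition \ref{vircomp} --- which requires tracing through how the symmetric perfect obstruction theory of Proposition \ref{prof} restricts to the connected components in \eqref{decoBS} and \eqref{decoPT}.
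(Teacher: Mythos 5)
Your proposal follows essentially the same outline as the paper: decompose the $T$-fixed obstruction theory of the relative moduli into an ``inner'' piece supported on $\mathbb{P}$ and a ``beyond-$C$'' piece, factorize the localized integral accordingly, observe that the beyond-$C$ factor is literally the same space with the same virtual normal bundle for both BS and PT (hence yields the same constant $b$), and reindex $n\mapsto n-j$ to produce the $q^{-j}$ shift. The one place where your sketch deviates from the paper's mechanism is the justification of the normal-bundle splitting: you posit a direct-sum decomposition $N^{\mathrm{relBS,vir}}=N^{\mathrm{BS,vir}}\oplus N^{\mathrm{beyond,vir}}$ and argue the moving cross-terms $\Ext^*(I_1,I_2)^m$ vanish ``as in Lemma \ref{halp},'' but Lemma \ref{halp} is about $\omega_{\mathbb{P}}$ being supported on $S$ and does not address cross-Exts between factors glued along $S$. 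The paper sidesteps this: it defines the beyond-$C$ contribution $\mathcal{N}$ as the cone of the restriction map from the full obstruction theory on $\widetilde{\mathcal{Y}}$ to the one on $\mathbb{P}$, takes moving parts $M'$, and obtains the \emph{multiplicative} identity $e(N^{\mathrm{relBS,vir}})=e(N^{\mathrm{BS,vir}})\,e(M'^{\mathrm{BS}})$ at the level of Euler classes without ever claiming the cross-terms vanish. Once that identity is in hand (and the analogous one for PT, with $M'^{\mathrm{PT}}\cong M'^{\mathrm{BS}}$), the rest of your argument goes through as you describe; so the gap is in the justification of one step rather than in the overall strategy.
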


\begin{proof}
Recall the setting of \eqref{o55}, let $\mathcal{T}:=\mathcal{B}_{\eta_i}$, consider the relative perfect obstruction theory 
\begin{equation}\label{o56}
    E:=R\pi_{2*}(R\mathcal{H}om(\mathbb{I},\mathbb{I})_0\otimes\pi_1^*\omega_{\mathbb{P}})[2]\to \mathbb{L}_{\text{BS}^g_n(\mathbb{P}/S, \beta)/\mathcal{T}},
\end{equation}
and recall the construction of the perfect obstruction theory for $\text{BS}^g_n(\mathbb{P}/S, \beta)$ from \eqref{o55}:
\begin{equation*}
\begin{tikzcd}
\mathcal{E}\arrow{r}\arrow{d}& E\arrow{d}\arrow{r}& \mathbb{L}_{\mathcal{T}}[1]\arrow{d}{\text{id}}\\
\mathbb{L}_{\text{BS}^g_n(\mathbb{P}/S, \beta)}\arrow{r}& \mathbb{L}_{\text{BS}^g_n(\mathbb{P}/S, \beta)/\mathcal{T}}\arrow{r}& 
\mathbb{L}_{\mathcal{T}}[1].
\end{tikzcd}
\end{equation*}
Let $N[1]$ be the cone of the map
\[R\pi_{2*}(R\mathcal{H}om(\mathbb{I},\mathbb{I})_0\otimes\pi_1^*\omega_{\mathbb{P}})[2]
\to 
R\pi_{2*}(R\mathcal{H}om(\mathbb{I}|_{\mathbb{P}},\mathbb{I}|_{\mathbb{P}})_0\otimes\pi_1^*\omega_{\mathbb{P}})[2]\] and let $\mathcal{N}$ be defined by
\begin{equation*}
\begin{tikzcd}
\mathcal{N}\arrow{r}\arrow{d}& N\arrow{d}\arrow{r}& \mathbb{L}_{\mathcal{T}}[1]\arrow{d}{\text{id}}\\
\mathbb{L}_{\text{BS}^g_n(\mathbb{P}/S, \beta)}\arrow{r}& \mathbb{L}_{\text{BS}^g_n(\mathbb{P}/S, \beta)/\mathcal{T}}\arrow{r}& 
\mathbb{L}_{\mathcal{T}}[1].
\end{tikzcd}
\end{equation*}
Denote by $M^{\text{BS}}_{n, \pi, m, j}$ the restriction of $\mathcal{N}$ to $\text{BS}^g_n(\mathbb{P}/S, \pi, m)_j$. It depends only on its behaviour beyond $C$. Denote by $M'^{\text{BS}}_{n, \pi, m, j}$ the sum of non-zero weight subspaces of $M^{\text{BS}}_{n, \pi, m, j}$.
Then 
\begin{equation}\label{BSprime}
    e\left(N^{\text{relBS}, \text{vir}}_{n, \pi, m, j}\right)=e\left(N^{\text{BS}, \text{vir}}_{n, \pi, m, j}\right)e\left(M'^{\text{BS}}_{n, \pi, m, j}\right)
\end{equation} and so 
\begin{multline}\label{BSprime2}
    \int_{[\text{BS}_n(\mathbb{P}/S, \pi, m)_j]^{\text{vir}}}\frac{\varepsilon^*(C_\eta)}
    {e\left(N^{\text{relBS}, \text{vir}}_{n, \pi, m, j}\right)}=\\
\int_{[\text{PT}_{\chi(\pi)+j}(\mathbb{P}/S, \pi, 0)]^{\text{vir}}}\frac{\varepsilon^*(C_\eta)}
{e\left(M'^{\text{BS}}_{n, \pi, m, j}\right)}
\int_{[\text{BS}_{n-j}(\pi, m)]^{\text{vir}}}\frac{1}{e\left(N^{\text{BS}, \text{vir}}_{n-j, \pi, m}\right)}.
\end{multline}
Define similarly $M^{\text{PT}}_{n, \pi, m, j}$ and $M'^{\text{PT}}_{n, \pi, m, j}$. Then there are analogous results for PT invariants to \eqref{BSprime} and \eqref{BSprime2}. The conclusion follows from $M'^{\text{PT}}_{n, \pi, m, j}\cong M'^{\text{BS}}_{n, \pi, m, j}$. 
\end{proof}

\subsection{Localization for relative BS and PT pairs II}\label{lrbspt2}


Let $\mathbb{C}^*\subset T$ be a generic torus.
For a $\mathbb{C}^*$-representation $V$, denote by $V_w$ its weight $w$-subspace and let $V^m:=\bigoplus_{w\in\mathbb{Z}\setminus\{0\}}V_w.$
For $A, B\in D^b(\mathbb{P})$ two $T$-fixed complexes, the vector spaces $\text{Ext}^i(A,B)$ are natural $\mathbb{C}^*$-representations. 
For $w$ a weight, let $\text{ext}^i(A,B)_w:=\dim \text{Ext}^i(A,B)_w$, and denote by:
\begin{align*}
\text{ext}^i(A,B)^+&:=\sum_{w\in\mathbb{N}\setminus 0} \text{ext}^i(A,B)_w,\\
    \chi(A,B)^+&:=\sum_{i=0}^3 (-1)^i \text{ext}^i(A,B)^+.
\end{align*} 

Let $B_{\pi, m, n}$ be the set of connected components of $\text{BS}_n(\pi, m)$ and let $P_{\pi, m, n}$ be the set of connected components of $\text{PT}_n(\pi, m)$. Denote by $D=D_{\pi}$ the Cohen-Macaulay curve supported on $\bigcup_{i=1}^4 L_i$ with partition profile $\pi$. 
For $k$ in $B_{\pi, m, n}$ or $P_{\pi, m, n}$, consider a complex 
\[I=\left[\mathcal{O}_{\mathbb{P}}\xrightarrow{s} F\right]\] that intersects $S$ transversely in the connected component corresponding to $k$. Define
$\ell(k):=\chi(I,I)^{+}-\chi(\mathcal{I}_D, \mathcal{I}_D)^{+}.$ Consider the class
\[e_\pi:=\frac{e\left(\text{Ext}^1(\mathcal{I}_D, \mathcal{I}_D)^m\right)}{e\left(\text{Ext}^2(\mathcal{I}_D, \mathcal{I}_D)^m\right)}.\]

\begin{prop}\label{nee2}
For $k\in B_{\pi, m, n}$, there exists a constant $\gamma:=\gamma(k)$ such that
\begin{align*}
    \int_{[\text{BS}_{n}(\pi, m)_k]^{\text{vir}}}\frac{1}{e\left(N^{\text{BS}, \text{vir}}_{n, \pi, m}\right)}&=(-1)^{\ell(k)}\int_{[\text{BS}_{n}(\pi, m)_k]^{\text{vir}}}\frac{1}{e_\pi}=(-1)^{\ell(k)}\gamma\int_{[\text{BS}_{n}(\pi, m)_k]^{\text{vir}}}1,\\
    \int_{[\text{PT}_{n}(\pi, m)_k]^{\text{vir}}}\frac{1}{e\left(N^{\text{PT}, \text{vir}}_{n, \pi, m}\right)}&=(-1)^{\ell(k)}\int_{[\text{PT}_{n}(\pi, m)_k]^{\text{vir}}}\frac{1}{e_\pi}=(-1)^{\ell(k)}\gamma\int_{[\text{PT}_{n}(\pi, m)_k]^{\text{vir}}}1.
\end{align*}
\end{prop}

\begin{proof}
We discuss the equality for BS spaces.
We need to check that 
\begin{equation}\label{e432}
    e\left(N^{\text{BS}, \text{vir}}_{n, \pi, m}\right)=(-1)^{\ell(k)}e_\pi.
    \end{equation}
    Consider the triangles 
\begin{align*}
    h^0(I)&\to I\to h^1(I)[-1]\rightarrow h^0(I)[1],\\
    h^0(I)&\to \mathcal{I}_{D}\to J\rightarrow h^0(I)[1].
\end{align*}
Let $A, B$ be sheaves in the set $\{\mathcal{I}_D, J, h^1(I)\}$ such that not both of them are $\mathcal{I}_D$. To show \eqref{e432}, it suffices to show that
\begin{equation}\label{e4321}
    e\left(\sum_{i=0}^3 (-1)^i\text{Ext}^i(A, B)^m\right)=(-1)^{\chi(A,B)^{+}}.
\end{equation}
One of $A$ and $B$ is supported on $C$ and $\omega_{\mathbb{P}}|_C$ is trivial. By Serre duality, \[\text{Ext}^i(A, B)_w\cong \left(\text{Ext}^{3-i}(B, A)_{-w}\right)^{\vee}\] for any $w\in\mathbb{Z}$. We thus have
\[e\left(\text{Ext}^i(A, B)_w-\text{Ext}^{3-i}(B, A)_{-w}\right)=(-1)^{\text{ext}^i(A,B)_w},\]
and \eqref{e4321} follows. The constant $\gamma$ is the same for both BS and PT spaces because the integrand $e_\pi$ does not depend on the space used and the virtual classes $[\text{BS}_{n}(\pi, m)_k]^{\text{vir}}$ and $[\text{PT}_{n}(\pi, m)_k]^{\text{vir}}$ are of dimension zero.
\end{proof}

Fix a partition profile $\pi$ and $m\geq 0$. 
Define
\begin{align*}
   \text{BS}'_{n}(\pi, m)_k&=(-1)^{\ell(k)}\int_{[\text{BS}_n(\pi,m)_k]^{\text{vir}}}1,\\
   \text{PT}'_{n}(\pi, m)_k&=(-1)^{\ell(k)}\int_{[\text{PT}_n(\pi,m)_k]^{\text{vir}}}1
\end{align*}
for $k$ in $B_{\pi, m, n}$ or $P_{\pi, m, n}$, respectively. 
Define the generating series
\begin{align*}
    \text{BS}_{\pi, m}(q)&:=\sum_{n\in\mathbb{Z}} \sum_{k\in B_{\pi, m, n}} \text{BS}'_n(\pi,m)_k\,q^n,\\
    \text{PT}_{\pi, m}(q)&:=\sum_{n\in\mathbb{Z}} \sum_{k\in P_{\pi, m, n}} \text{PT}'_n(\pi,m)_k\,q^n.
\end{align*}
They are both Laurent series in $q$. Define the generating series
\begin{align*}
    \text{BS}_{\pi}(q, z)&:=\sum_{m\geq 0} \text{BS}_{\pi, m}(q)\,z^{m[C]},\\
    \text{PT}_{\pi}(q, z)&:=\sum_{m\geq 0} \text{PT}_{\pi, m}(q)\,z^{m[C]}.
\end{align*} 
They are both elements of $\mathbb{C}[\Delta]_{\Phi}$. Note that using the localization formula, see also Proposition \ref{inte} part (c), we have that
\begin{equation}\label{llpt}
    \text{PT}^{\mathbb{P}, \text{exc}}_{m[C]}(q)=\text{PT}_{0, m}(q),
\end{equation}
where $0$ is the zero partition profile.
In Section \ref{hall}, we prove the following wall-crossing result:

\begin{thm}\label{halln}
Let $\pi$ be a partition profile. Then
\[ BS_{\pi}(q, z)=\frac{PT_{\pi}(q, z)}{PT_0(q, z)}.\]
\end{thm}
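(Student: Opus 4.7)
The plan is to reduce to the Calabi--Yau wall-crossing argument of Bryan--Steinberg \cite{bs} by introducing an auxiliary toric Calabi--Yau threefold $W$ containing a curve $C\cong \mathbb{P}^1$ with normal bundle $N_{C/W}\cong \mathcal{O}_C(-1)^{\oplus 2}$, and such that the four $(\mathbb{C}^*)^3$-invariant legs from $0,\infty\in C$ different from $C$ itself are proper. Concretely, such a $W$ can be built by taking the total space of $\mathcal{O}_C(-1)^{\oplus 2}$ and gluing on compactifying toric charts at $0,\infty$ so that each of the four exterior legs becomes a $\mathbb{P}^1$, arranging that the local Calabi--Yau forms glue. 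Let $g_W:W\to W'$ denote the contraction of $C$.

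First, I would set up the moduli-theoretic identification. A $T$-fixed BS (resp.\ PT) pair $[\mathcal{O}_W\to F]$ on $W$ with compactly supported $F$ whose profile along the four legs matches $\pi$ is the same data as a $T$-fixed BS (resp.\ PT) pair on $\mathbb{P}$ with partition profile $\pi$, transverse to $S$, since both live inside the common open neighborhood of $C\cup L_1\cup\cdots\cup L_4$. Under this identification, the symmetric perfect obstruction theory of Proposition \ref{prof} agrees with the standard symmetric obstruction theory of the corresponding Calabi--Yau moduli on $W$, and the signed cycles $\text{BS}'_n(\pi,m)_k$, $\text{PT}'_n(\pi,m)_k$ (with the signs $(-1)^{\ell(k)}$ supplied by Proposition \ref{nee2}) coincide with the Behrend-function-weighted Euler characteristics of the BS and PT moduli on $W$ with fixed $\pi$-profile.

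Second, I would execute the Bryan--Steinberg Hall algebra argument on $W$ essentially verbatim. The torsion pair $(\textbf{T},\textbf{F})$ of $\text{Coh}_{\leq 1}(W)$ associated to $g_W$ is defined as in Subsection \ref{defs}, the openness statement of Lemma \ref{openimp} applies, and the stability construction of Subsection \ref{stabi} goes through using the set $S=(-\infty,\infty]\times (-\infty,\infty]$. The key identity in the motivic Hall algebra of $\textbf{A}=\langle\mathcal{O}_W[1],\text{Coh}_{\leq 1}(W)\rangle_{\text{exc}}$ expresses the class of the stack of PT pairs as the convolution of the class of the stack of BS pairs with the class of the stack of sheaves in $\textbf{T}$ supported on the exceptional curve $C$, obtained by the canonical torsion-pair factorization of a PT pair. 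Imposing fixed partition profile $\pi$ along the four proper legs is compatible with this factorization: the BS factor inherits the profile $\pi$, while the $\textbf{T}$-factor is supported entirely on $C$ and is independent of $\pi$, contributing exactly the generating series $\text{PT}_0(q,z)$ by \eqref{llpt}.

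Third, I would apply Joyce's integration morphism to this Hall algebra identity. The integration map is an algebra homomorphism to the Laurent ring $\mathbb{C}[\Delta]_\Phi$ precisely because $W$ is Calabi--Yau; this is the reason for passing to $W$ rather than working directly on $\mathbb{P}$. The identity then becomes
\[ \text{PT}_\pi(q,z)=\text{BS}_\pi(q,z)\cdot \text{PT}_0(q,z), \]
which rearranges to the claim. The hardest step is the first: constructing $W$ with all the claimed properties and verifying that the identification of moduli spaces transports the symmetric obstruction theories correctly, so that virtual counts on the $\mathbb{P}$-side match Behrend-weighted counts on the $W$-side. A secondary difficulty is tracking the $\pi$-decoration through the Hall algebra computation: one must check that Zariski fibrations and stack-theoretic extensions appearing in the Joyce--Bryan--Steinberg manipulations respect the fixed partition profile along the four legs, which should follow because the exceptional contributions never alter the partition profile on a leg disjoint from $C$.
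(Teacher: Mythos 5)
Your high-level plan agrees with the paper: pass to a compact toric Calabi--Yau model $W$ of the resolved conifold with four proper legs, identify the $T$-fixed BS/PT loci on $\mathbb{P}$ with the corresponding loci on $W$ so that the signed virtual counts match Behrend-weighted Euler characteristics (this is exactly what Proposition \ref{inte} carries out, using Proposition \ref{nee2}), and then run a motivic Hall algebra wall-crossing on $W$. The Calabi--Yau property of $W$ is indeed what makes Joyce's integration map a Poisson algebra homomorphism, so passing to $W$ rather than arguing on $\mathbb{P}$ directly is the correct move.

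However, there is a real gap in the middle step. You assert a direct Hall algebra identity expressing the class of $\pi$-profile PT pairs as the convolution of the class of $\pi$-profile BS pairs with the class of pure exceptional $\textbf{T}$-sheaves on $C$, ``by the canonical torsion-pair factorization of a PT pair.'' That factorization exists in one direction: a PT pair $[\mathcal{O}_W\to F]$ gives a short exact sequence $0\to T\to F\to F'\to 0$ with $T$ pure one-dimensional and exceptional and $[\mathcal{O}_W\to F']$ a BS pair. But the fiber of this assignment is not the full space of extensions: an extension of a BS pair $[\mathcal{O}_W\to F']$ by $T[-1]$ produces a cokernel that contains a quotient of $T$, which is generically one-dimensional, so the resulting complex is generally not a PT pair. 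Hence the identity you claim is false in $K(\mathrm{St}/\mathfrak{M})$. Relatedly, identifying the exceptional factor's contribution with $\mathrm{PT}_0(q,z)$ is not immediate: the pure exceptional sheaf stack is not the PT$_0$ stack, and the equality only emerges after Joyce's no-pole theorem and integration. The paper instead proves the Bridgeland/Bryan--Steinberg identities (Proposition \ref{iden}) restricted to the subcategory of sheaves with $\pi$-profile outside $C$, deduces Proposition \ref{eq}, namely $\mathrm{DT}_\pi * 1_{\textbf{T}_s}=\mathrm{DT}_\pi^s * 1_{\textbf{T}_s} * \mathrm{PT}_\pi^s$ for both $s=a$ (BS torsion pair) and $s=b$ (PT torsion pair), applies the integration map $\Psi$ and Joyce's no-pole Proposition \ref{nopole}, and only then cancels the common $\Psi([\mathrm{DT}_{\mathrm{tors}}])$ factor to obtain $\mathrm{BS}_\pi = \mathrm{PT}_\pi/\mathrm{PT}_0$. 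Your write-up collapses this DT-mediated argument into a single stack-level identity that does not hold, and also glosses over the verification that the restricted Hall algebra classes (boundedness, $\Phi$-finiteness of the $\pi$-decorated stacks) are legitimate Laurent elements, which the paper handles via Lemmas 67--68 of Bryan--Steinberg.
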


We now explain that:

\begin{prop}\label{propa}
Theorem \ref{halln} implies Propositions \ref{releq} and \ref{releqf} and Theorem \ref{thm2}.
\end{prop}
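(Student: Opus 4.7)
The plan is to combine the degeneration formulas of Theorem \ref{degen} with virtual $T$-localization to reduce the BS/PT correspondence on $Y$ to the $T$-fixed wall-crossing of Theorem \ref{halln}. First I would apply Theorem \ref{degen} to both $\text{BS}^f(q;\gamma,\kappa)$ and $\text{PT}^Y(q;\gamma,\kappa)$ using the family \eqref{fami2}, whose central fiber is $\text{Bl}_C Y\cup_S\mathbb{P}$. Since the exceptional divisor of $\text{Bl}_p X$ is $\mathbb{P}^1\times\mathbb{P}^1=S$, which agrees with the exceptional divisor of $\text{Bl}_C Y\to Y$, the birational map $\text{Bl}_C Y\to\text{Bl}_p X$ is an isomorphism, so BS pairs coincide with PT pairs on this side. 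Placing all insertions $\gamma_i\in f^*H^\cdot(X,\mathbb{Z})$ on the $\text{Bl}_C Y$ component, the two degeneration formulas then share a common factor $\text{PT}^{\text{Bl}_C Y/S}_{\beta_1,\eta}(q;\gamma,\kappa)$ and differ only in the $\mathbb{P}$-side factors $\text{BS}^{g/S}_{\eta^\vee}(q)$ versus $\text{PT}^{\mathbb{P}/S}_{\eta^\vee}(q)$. Thus Theorem \ref{thm2} reduces to the no-insertion identity
$$\text{BS}^{g/S}_\eta(q)=\frac{\text{PT}^{\mathbb{P}/S}_\eta(q)}{\text{PT}^{Y,\text{exc}}(q)}$$
for every cohomologically weighted partition $\eta$.

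Next I would apply virtual localization with respect to the Calabi-Yau-preserving torus $T\cong(\mathbb{C}^*)^2$ acting on $\mathbb{P}$. The $T$-fixed loci of $\text{BS}^g_n(\mathbb{P}/S,\beta)$ and $\text{PT}_n(\mathbb{P}/S,\beta)$ decompose by partition profile $\pi=(\pi_i)_{i=1}^4$ along the four legs and by a ``beyond-$\mathbb{P}$'' index $j\geq 0$, with virtual classes factoring as in Proposition \ref{vircomp}. By Proposition \ref{nee}, both relative series split identically on the BS and PT sides as $\text{BS}^{g/S}_{\pi,j,\eta}(q)=b(\pi,\eta,j)\,q^{-j}\,\text{BS}_{\pi,\eta}(q)$ and $\text{PT}^{\mathbb{P}/S}_{\pi,j,\eta}(q)=b(\pi,\eta,j)\,q^{-j}\,\text{PT}_{\pi,\eta}(q)$, with the \emph{same} prefactor. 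Summing over $\pi$ and $j$, the identity above reduces to the per-$\pi$ equation
$$\frac{\text{BS}_{\pi,\eta}(q)}{\text{PT}_{\pi,\eta}(q)}=\frac{1}{\text{PT}^{Y,\text{exc}}(q)}.$$

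The final step identifies these ratios with those of Theorem \ref{halln}. On $\text{BS}_n(\pi,m)$ and $\text{PT}_n(\pi,m)$ the restriction map $\varepsilon$ is constant, with image the single point of $\text{Hilb}(S,|\pi|)$ determined by $\pi$, so $\varepsilon^*(C_\eta)$ is a scalar $c_{\eta,\pi}$. Proposition \ref{nee2} replaces the equivariant Euler class $e(N^{\text{BS},\text{vir}}_{n,\pi,m})$ by $(-1)^{\ell(k)}e_\pi$, where $e_\pi$ depends only on $\pi$, yielding
$$\text{BS}_{\pi,\eta}(q)=\frac{c_{\eta,\pi}}{e_\pi}\,\text{BS}_\pi(q,z)\big|_{z^{m[C]}\mapsto q^{m[C]}},$$
and analogously for PT. The scalar $c_{\eta,\pi}/e_\pi$ cancels in the ratio, so Theorem \ref{halln} gives $\text{BS}_{\pi,\eta}(q)/\text{PT}_{\pi,\eta}(q)=1/\text{PT}_0(q,z)|_{z^{m[C]}\mapsto q^{m[C]}}$. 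Using \eqref{llpt} and the fact that PT pairs of class $m[C]$ are supported on the formal neighborhood $\text{Tot}(\mathcal{O}_C(-1)^{\oplus 2})$ of $C$, which is identical in $Y$ and in $\mathbb{P}\setminus S$, this right-hand side equals $\text{PT}^{\mathbb{P},\text{exc}}(q)=\text{PT}^{Y,\text{exc}}(q)$, completing the proof. The main technical hurdle is verifying that the same prefactor $b(\pi,\eta,j)\,q^{-j}$ really appears on both BS and PT sides in Step 2 — this is where it becomes essential that the difference between BS and PT stability is concentrated near $C$, which sits entirely on the $\mathbb{P}$ part of the degeneration.
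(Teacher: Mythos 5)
Your proposal is correct and follows essentially the same route as the paper's proof, just presented in the opposite logical order: you go from Theorem \ref{thm2} down to \eqref{releq}, then to \eqref{releqf}, then to Theorem \ref{halln}, while the paper runs the three reductions in the reverse direction (Step 1: \ref{halln} $\Rightarrow$ \eqref{releqf}, Step 2: \eqref{releqf} $\Rightarrow$ \eqref{releq}, Step 3: \eqref{releq} $\Rightarrow$ \ref{thm2}). The ingredients you invoke — the degeneration formulas \eqref{degPT}/\eqref{degBS} with insertions placed on $\text{Bl}_CY$, Proposition \ref{vircomp}, Proposition \ref{nee} with the common prefactor $bq^{-j}$, Proposition \ref{nee2} turning the Euler classes into $e_\pi$, and equation \eqref{llpt} — are exactly those the paper uses; you also spell out correctly that $\varepsilon^*(C_\eta)$ is a scalar on each component since $\varepsilon$ maps to a single $T$-fixed point of $\text{Hilb}(S,|\pi|)$, and that the identity $\text{PT}^{\mathbb{P},\text{exc}}=\text{PT}^{Y,\text{exc}}$ is needed at the end (which you justify by the formal-neighborhood argument; the paper absorbs this into the combination of the degeneration formulas and \eqref{llpt}).
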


\begin{proof}

\textbf{Step 1.} Theorem \ref{halln} for $q=z$ implies Proposition \ref{releqf} using Propositions \ref{nee} and \ref{nee2}.
\smallbreak

\textbf{Step 2.} The localization formula for the action of $T$ on $\mathbb{P}$ implies that
$$\text{BS}^{g/S}_{\beta,\eta}(q)=\sum_{\pi, m, j} \text{BS}^{g/S}_{\pi, m, j, \eta}(q)$$
where the sum is taken over partitions $\pi=(\pi_i)_{i=1}^4$, $m\geq 0$ such that $[\pi]+m[C]=\beta\in H_2(\mathbb{P})$, and $j\geq 0$.
The analogous result holds for PT invariants, and so Proposition \ref{releqf} implies Proposition \ref{releq}.
\smallbreak

\textbf{Step 3.} We explain that Proposition \ref{releq} implies Theorem \ref{thm2}. Let $p$ be the singular point of $X$. 
Consider the family
\begin{equation}\label{fami}
    \widetilde{f}: \text{Bl}_{C\times 0} (Y\times \mathbb{A}^1_{\mathbb{C}})\to \text{Bl}_{p\times 0}(X\times \mathbb{A}^1_{\mathbb{C}})
    \end{equation}
    over $\mathbb{A}^1_{\mathbb{C}}$.
For $t\neq 0,$ the fiber is the original contraction $f: Y\to X$. For $t=0$, the fiber is 
$$\text{Bl}_CY\cup \mathbb{P}\to \text{Bl}_p X\cup \mathbb{P}'.$$ The map $\text{Bl}_CY\xrightarrow{\sim} \text{Bl}_pX$ is an isomorphism, and the map $$g: \mathbb{P}\to \mathbb{P}'$$ is the contraction of the zero section $C$. 

 Let $p: \text{Bl}_{p\times 0}(X\times \mathbb{A}^1_{\mathbb{C}})\to X$ be the natural projection. The insertions are in $\widetilde{f}^*p^*H^i(X,\mathbb{Z})$. We may assume that the insertions are in degree $i>0$. When using the degeneration formulas for PT or BS invariants for the family \eqref{fami}, there will be no insertions in the $\mathbb{P}$ factors. Indeed, the cohomology of $H^\cdot(\mathbb{P})$ is generated by $[C]$ and $h$, and these classes do not intersect classes in $X$ of positive codimension.
 By the degeneration formula for PT invariants, see Subsection \ref{pt}, we have that
\begin{equation}\label{degPT}
\text{PT}^Y_{\beta}(q; \gamma, \kappa)=
\sum \text{PT}^{\text{Bl}_C(Y)/S}_{\beta_1, \eta}(q; \gamma, \kappa)\,
\text{PT}^{\mathbb{P}/S}_{\beta_2, \eta^{\dual}}(q)
\frac{(-1)^{|\eta|-l(\eta)}\xi(\eta)}{q^{|\eta|}},
\end{equation} where the sum is over all splittings of the curve class $\beta_1+\beta_2=\beta$ and over all cohomologically weighted partitions $\eta$. 
By the degeneration formula for BS invariants, see Theorem \ref{degen}, we have that
\begin{equation}\label{degBS}
\text{BS}^f_{\beta}(q; \gamma, \kappa)=
\sum \text{PT}^{\text{Bl}_C(Y)/S}_{\beta_1, \eta}(q; \gamma, \kappa)\,
\text{BS}^{g/S}_{\beta_2, \eta^{\dual}}(q)
\frac{(-1)^{|\eta|-l(\eta)}\xi(\eta)}{q^{|\eta|}},
\end{equation} where once again the sum is over all splittings of the curve class $\beta_1+\beta_2=\beta$ and over all cohomologically weighted partitions $\eta$. 
The decompositions \eqref{degPT} and \eqref{degBS} together with the Proposition \ref{releq} and Claim \eqref{llpt} imply Theorem \ref{thm2}.
\end{proof}

\section{The Hall algebra argument}\label{hall}

Let $W$ be a toric Calabi-Yau $3$-fold which contains a curve $C\cong \mathbb{P}^1$ with normal bundle $N\cong\mathcal{O}_C(-1)^{\oplus 2}$ such that the four legs from the $\left(\mathbb{C}^*\right)^3$-fixed points $0, \infty\in C$ different from $C$ are proper. We abuse notation and denote the four legs by $(L_i)_{i=1}^4$.
Let $V:=W\setminus N$. Let $h:W\to W'$ be the contraction of $C$.

In this section, we prove Theorem \ref{halln}. We revisit the argument of Bryan--Steinberg \cite{bs} which compares BS and DT/PT invariants for $h$. 


\subsection{Preliminaries.}
\subsubsection{} 
In this Subsection, we recall various Hall algebras and the integration map from the semi-continuous Hall algebra. For $(\beta, n)\in N_{\leq 1}(W)$, let $\mathfrak{M}_{\beta, n}$ be the moduli stack of sheaves on $W$ of compact support $(\beta, n)$. Let \[\mathfrak{M}:=\bigsqcup_{(\beta, n)\in N_{\leq 1}(W)}\mathfrak{M}_{\beta, n}.\] 
Recall the definitions of the motivic Hall algebra and of the Grothendieck ring of varieties from Subsection \ref{moHA}.
Consider a triplet $(\pi, m, n)$ as in Subsection \ref{sss}. 
Let $D_\pi\subset W$ be the Cohen Macaulay curve supported on the legs $\bigcup_{i=1}^4 L_i$ with partition profile $\pi$. 
Let \[\mathfrak{N}_{\pi, m, n}\subset \mathfrak{M}\] be the substack of sheaves $F$ with \begin{equation}\label{eq123}
    F|_{W\setminus C}\cong \mathcal{O}_{D_\pi}|_{W\setminus C},
    \end{equation}
    $\chi(F)=n$, and $[F]=m[C]+[\pi]\in H_2(W, \mathbb{Z})$. Then $F$ intersects $V$ transversely.
Denote by $\mathfrak{M}_{\beta, n}(\mathcal{O})$ the stack of sheaves $F$ of compact support $\beta$ and $\chi(F)=n$ with a section. 
Let \[\mathfrak{N}_\pi:=\bigsqcup_{\substack{n\in\mathbb{Z}\\m\geq 0}}\mathfrak{N}_{\pi, m, n}.\]
We define $\mathfrak{M}(\mathcal{O})$, $\mathfrak{N}_{\pi, m, n}(\mathcal{O})$ etc. similarly. 

\begin{defn}
Let $K\left(\text{Var}/k\right)_{\text{loc}}:=K\left(\text{Var}/k\right)\left[\mathbb{L}^{-1}, \left(1+\cdots+\mathbb{L}^n\right)^{-1}: n\geq 1\right]$.

Let
$H_{\text{reg}}\subset H:=K\left(\text{St}/\mathfrak{M}\right)$ be the $K\left(\text{Var}/k\right)_{\text{loc}}$ module generated by the span of classes $\left[V\xrightarrow{f}\mathfrak{M}\right]$ for $V$ a variety. One can assume that $V$ is an algebraic space and obtain the same module by \eqref{groth}. 
\end{defn}

\subsubsection{}
The following proposition is proved as in \cite{b}:

\begin{prop}
(a) The convolution product preserves $H_{\text{reg}}$, and thus $H_{\text{reg}}$ is a $K\left(\text{Var}/k\right)_{\text{loc}}$-algebra. 

(b) Let $H_{\text{sc}}:=H_{\text{reg}}/(\mathbb{L}-1)H_{\text{reg}}$. Then $H_{\text{sc}}$ is a commutative $K\left(\text{Var}/k\right)_{\text{loc}}$-algebra.  
\end{prop}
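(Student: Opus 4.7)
The plan is to follow the standard Bridgeland--Joyce argument (compare \cite{b}). For part (a), I take two generators $[V \xrightarrow{a} \mathfrak{M}]$ and $[W \xrightarrow{b} \mathfrak{M}]$ of $H_{\text{reg}}$ with $V$ and $W$ varieties (equivalently, by the isomorphism \eqref{groth}, algebraic spaces). Their convolution is $[E(V, W) \to \mathfrak{M}]$ with $E(V, W) := (V \times W) \times_{\mathfrak{M} \times \mathfrak{M}} \mathfrak{M}^{(2)}$; the fiber of the projection $E(V, W) \to V \times W$ over a closed point $(v, w)$ is the stack of extensions of $B_w$ by $A_v$, isomorphic to the quotient stack $[\text{Ext}^1(B_w, A_v) / \text{Hom}(B_w, A_v)]$, with the Hom-group acting by translation. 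Using upper-semicontinuity of Ext-dimensions, I stratify $V \times W$ into locally closed subvarieties $S_{p, q}$ on which $\dim \text{Hom} = p$ and $\dim \text{Ext}^1 = q$ are constant. Over each stratum, $E(V, W)|_{S_{p, q}} \to S_{p, q}$ is a Zariski-locally trivial $[\mathbb{A}^q / \mathbb{A}^p]$-fibration, and the Zariski-fibration relation in the definition of $K(\text{St}/\mathfrak{M})$ identifies its class with $[S_{p, q} \to \mathfrak{M}]$ multiplied by an element of $K(\text{Var}/\mathbb{C})_{\text{loc}}$ (which already inverts $\mathbb{L}$ and the factors $1 + \mathbb{L} + \cdots + \mathbb{L}^n$). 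Summing over strata exhibits $[V] * [W]$ as an element of $H_{\text{reg}}$.

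For part (b), I show that $[V] * [W] - [W] * [V] \in (\mathbb{L} - 1) H_{\text{reg}}$. Applying the same stratification to both products and comparing fiber classes on corresponding strata, the difference reduces to a sum of terms of the shape $(\mathbb{L}^{q - p} - \mathbb{L}^{q' - p'})$ times classes of varieties mapping to $\mathfrak{M}$, where $(p, q)$ records the $\text{Hom}/\text{Ext}^1$-dimensions of $(B_w, A_v)$ and $(p', q')$ those of $(A_v, B_w)$. The elementary identity $\mathbb{L}^a - \mathbb{L}^b = (\mathbb{L} - 1)(\mathbb{L}^{b} + \cdots + \mathbb{L}^{a - 1})$ for $a > b$ shows each such term lies in $(\mathbb{L} - 1) H_{\text{reg}}$, while the target maps to $\mathfrak{M}$ on the two sides match because the underlying numerical class of an extension depends only on its endpoints. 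Hence $[V] * [W] \equiv [W] * [V] \pmod{\mathbb{L} - 1}$, and $H_{\text{sc}}$ is commutative.

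The main obstacle is the geometric input of part (a): the Zariski-local triviality of $E(V, W) \to V \times W$ on each Ext-stratum. I would establish this by choosing local trivializations of the universal $\text{Ext}^1$-bundle on the stratum to present the stack of extensions as a quotient of an affine bundle by a vector group. Once this step is in place, the rest of the argument is bookkeeping with the defining relations of the Grothendieck groups involved.
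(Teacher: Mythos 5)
Your overall strategy — stratify $V\times W$ by $\hom$ and $\mathrm{ext}^1$ dimensions, exploit that the extension stack is $[\mathbb{A}^q/\mathbb{A}^p]$ fibrewise, and then bookkeep in the Grothendieck group — is indeed the Bridgeland--Joyce route that the paper invokes (the paper itself just cites \cite{b} and gives no details). However, both halves of your argument have a gap centered on the same issue: the map from the extension stack to $\mathfrak{M}$ does not factor through the base of your fibration, so the Zariski-fibration relation cannot be applied the way you apply it.

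For part (a): you assert that the Zariski-fibration relation identifies the class of $E(V,W)|_{S_{p,q}}$ with $[S_{p,q}\to\mathfrak{M}]$ times a scalar. But the relation in the definition of $K(\mathrm{St}/\mathfrak{M})$ requires both fibrations to be over a common base $U$ \emph{whose map to $\mathfrak{M}$ is fixed}, and there simply is no map $S_{p,q}\to\mathfrak{M}$ — the natural map from $S_{p,q}\subset V\times W$ is to $\mathfrak{M}\times\mathfrak{M}$ (recording the two end terms $A_v,C_w$), not to $\mathfrak{M}$. The structure map $E(V,W)\to\mathfrak{M}$ sends an extension to its \emph{middle} term $B$, which genuinely depends on the extension class and does not factor through $S_{p,q}$. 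The correct reduction keeps the total space $\mathbb{E}_{p,q}$ of the $\mathrm{Ext}^1$-bundle over $S_{p,q}$ — this \emph{is} a variety and \emph{does} map to $\mathfrak{M}$ — and only the trivial $\mathrm{Hom}$-gerbe is collapsed, giving $[E(V,W)|_{S_{p,q}}\to\mathfrak{M}]=\mathbb{L}^{-p}\,[\mathbb{E}_{p,q}\to\mathfrak{M}]$. That expression lies in $H_{\mathrm{reg}}$; your version with $S_{p,q}$ in place of $\mathbb{E}_{p,q}$ is ill-formed.

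For part (b) the gap is more serious. You argue that after stratifying, $[V]*[W]-[W]*[V]$ becomes a sum of terms $(\mathbb{L}^{q-p}-\mathbb{L}^{q'-p'})\cdot[Z\to\mathfrak{M}]$ and that ``the target maps to $\mathfrak{M}$ on the two sides match because the underlying numerical class of an extension depends only on its endpoints.'' This is false: having the same class in $N_{\leq 1}(Y)$ is vastly weaker than having the same map to $\mathfrak{M}$, and the two stacks $E(V,W)$ and $E(W,V)$ map to $\mathfrak{M}$ by genuinely different rules (one records middle terms of extensions $0\to A_v\to B\to C_w\to 0$, the other of $0\to C_w\to B'\to A_v\to 0$). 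So there is no common variety $Z$ against which to factor out the scalar, and the argument does not produce an element of $(\mathbb{L}-1)H_{\mathrm{reg}}$. The mechanism that actually proves commutativity in \cite{b} is the $\mathbb{G}_m$-scaling action on $\mathrm{Ext}^1(C,A)$: scaling an extension class by a nonzero $\lambda$ gives an isomorphic middle term, so the map $\mathrm{Ext}^1(C,A)\setminus\{0\}\to\mathfrak{M}$ factors through $\mathbb{P}(\mathrm{Ext}^1(C,A))$, whence $[\mathrm{Ext}^1\to\mathfrak{M}]=[\{0\}\to\mathfrak{M}]+(\mathbb{L}-1)[\mathbb{P}(\mathrm{Ext}^1)\to\mathfrak{M}]$. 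Modulo $(\mathbb{L}-1)$, only the trivial extension survives, i.e.\ the class is that of $(v,w)\mapsto A_v\oplus C_w$ — and the direct sum is manifestly symmetric in its two arguments, giving $[V]*[W]\equiv[W]*[V]$ in $H_{\mathrm{sc}}$. Your proposal misses this step entirely, and without it the commutativity does not follow.
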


Recall the definition of the algebra
 $\mathbb{C}[\Delta]$ from Subsection \ref{Laurent}. Consider the Poisson algebra $\mathbb{C}[\Delta]$ with trivial Poisson bracket. 
Define the integration map \begin{equation}\label{smHA}
    \Psi: H_{\text{sc}}\to\mathbb{C}[\Delta]
    \end{equation}
    by the following formula, where $V$ is an algebraic space:
$$\Psi\left(\left[V\xrightarrow{f} \mathfrak{M}_{\beta,n}\right]\right)=\chi(V,f^*\nu)\,q^{\beta+n}.$$


\begin{thm}\label{ssf}
Consider a sheaf $F$ with compact support on $W$.
Let $G\subset \text{Aut}(F)$ be a maximal reductive group. The group $G$ acts naturally on $\text{Ext}^1_{W}(F, F)$.
There exist an open $G$-invariant set $F\in \mathcal{U}\subset \mathfrak{M}$, an open $G$-invariant subset $V\subset \text{Ext}^1_W(F, F)$, a holomorphic function $f:V/G\to\mathbb{A}^1_\mathbb{C}$, and a natural smooth map \begin{equation}\label{Phiphi}
    \Theta:\text{crit}(f)/G\to \mathcal{U}
\end{equation}
of relative dimension $\text{dim}\,\text{Aut}(F)-\text{dim}\,G$.
\end{thm}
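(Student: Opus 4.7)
The plan is to invoke the standard analytic local structure theorem for moduli of sheaves on a Calabi-Yau threefold, due to Joyce--Song \cite{js} (see also Behrend \cite{be}), which realizes $\mathfrak{M}$ locally as the critical locus of a holomorphic function. The point specific to this setting is that we track the action of a maximal reductive subgroup $G \subset \Aut(F)$ rather than the full automorphism group, which accounts for the relative dimension $\dim\Aut(F) - \dim G$ of $\Theta$.

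First, I would apply Luna's \'etale slice theorem in its stack-theoretic form at the point $F \in \mathfrak{M}$, whose stabilizer is $\Aut(F)$ with maximal reductive part $G$. The tangent space to $\mathfrak{M}$ at $F$ is $\Ext^1_W(F,F)$, carrying the natural $\Aut(F)$-action by conjugation, and the slice theorem produces, after restricting to $G$, an \'etale-local model for an open $G$-invariant neighborhood $\mathcal{U} \ni F$ in terms of the $G$-equivariant Kuranishi space inside $\Ext^1_W(F,F)$. The unipotent radical of $\Aut(F)$, of dimension $\dim\Aut(F) - \dim G$, contributes free fibres to the smooth map $\Theta$.

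Second, to identify this Kuranishi space with a critical locus, I would invoke the Calabi-Yau hypothesis on $W$. Serre duality gives a non-degenerate pairing $\Ext^1_W(F,F) \times \Ext^2_W(F,F) \to \Ext^3_W(F,F) \cong \mathbb{C}$, so $\Ext^2_W(F,F) \cong \Ext^1_W(F,F)^{\dual}$, and the dg-Lie algebra $R\Hom_W(F,F)$ governing deformations of $F$ inherits a cyclic structure of dimension three. The cyclic $L_\infty$ formalism produces a $G$-invariant formal function whose differential, composed with the Serre duality identification, equals the Kuranishi map $\kappa\colon \Ext^1_W(F,F) \to \Ext^2_W(F,F)$. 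The analytic Kuranishi construction of Joyce--Song upgrades this to a genuine holomorphic $G$-invariant $f$ on some open $0 \in V \subset \Ext^1_W(F,F)$, descending to $V/G \to \mathbb{A}^1_{\mathbb{C}}$; its critical locus is precisely the Maurer-Cartan locus $\kappa^{-1}(0)$, and combining with the slice produces the required $\Theta\colon \text{crit}(f)/G \to \mathcal{U}$.

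The main obstacle, I expect, is the compatibility between the Luna slice construction and the Joyce--Song critical-locus description at the level of stacks: Joyce--Song work with a smooth atlas of $\mathfrak{M}$, while the slice statement is most naturally phrased stack-theoretically. Verifying that $\Theta$ is smooth of relative dimension exactly $\dim\Aut(F) - \dim G$, uniformly in the choice of slice, requires a careful accounting of how the unipotent radical of $\Aut(F)$ is absorbed into the fibres; once this is done, the other ingredients --- Luna slices, cyclic $L_\infty$ formalism on a CY3, and the Joyce--Song holomorphic Kuranishi theorem --- are standard and assemble into the claimed local model.
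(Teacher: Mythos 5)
Your proposal sketches a from-scratch proof using Luna's \'etale slice theorem together with the cyclic $L_\infty$ structure on $R\Hom_W(F,F)$ coming from Serre duality, upgraded to a convergent holomorphic function via the analytic Kuranishi construction of Joyce--Song. The paper does not reprove this at all: its proof consists of a single sentence citing Joyce--Song \cite[Theorem 5.5]{js} and Toda \cite[Corollary 5.7]{t}, followed by the remark that \emph{Toda's} proof also works for compactly supported sheaves on a non-proper variety. That remark is in fact the entire mathematical content of the paper's proof, and it is precisely the point your sketch does not address.

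The gap is material, not cosmetic. Here $W$ is a toric Calabi--Yau threefold, hence non-proper, and $F$ is compactly supported. Joyce--Song's Theorem 5.5 is proved by gauge-theoretic analysis (elliptic operators, Sobolev completions, transversality) on a \emph{compact} underlying real manifold, and those arguments do not carry over verbatim to the non-compact setting even when the sheaves have compact support. Your proposal leans on exactly that analytic machinery to pass from the formal cyclic $L_\infty$ potential to a genuine holomorphic $f$ on an open neighbourhood. The paper side-steps this by invoking Toda's proof instead, which replaces the gauge theory with an algebraic local model built from the Ext-quiver of $F$ with its maximal reductive automorphism group $G$; this construction only sees the finite-dimensional spaces $\Ext^i_W(F,F)$ (and Serre duality for compactly supported sheaves on a CY3 with trivial canonical bundle, which still holds), so it adapts to non-proper $W$ essentially for free, and moreover produces the smooth morphism $\Theta$ of relative dimension $\dim\Aut(F)-\dim G$ directly by comparing the quotient by $G$ to the quotient by the full $\Aut(F)$. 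So while the overall toolkit you invoke (Luna slice, CY$_3$ Serre duality, Kuranishi/MC description) is the right one, to make your argument close you would need either to replace the Joyce--Song analytic step with Toda's Ext-quiver argument, or to supply a convergence argument for the cyclic $L_\infty$ potential that does not rely on compactness of $W$.
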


\begin{proof}
The above statement is known for moduli stacks of (complexes of) sheaves on proper varieties, see Joyce-Song \cite[Theorem 5.5]{js}, Toda \cite[Corollary 5.7]{t}. Toda's proof also works for sheaves of compact support on not necessarily proper varieties. 
\end{proof}

\begin{thm}\label{poi}
The integration map $\Psi:H_{\text{sc}}\to\mathbb{C}[\Delta]$ defined above is a Poisson algebra homomorphism. 
\end{thm}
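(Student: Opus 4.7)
The plan is to follow the strategy of Bridgeland \cite{b} and Joyce--Song \cite{js} for proper Calabi--Yau threefolds, with Theorem \ref{ssf} supplying the critical chart description that replaces the properness hypothesis used there. Since the target $\mathbb{C}[\Delta]$ is endowed with the trivial Poisson bracket, the statement amounts to two things: $\Psi$ is a $\mathbb{C}$-algebra homomorphism, and for any $a, b \in H_{\text{sc}}$ one has $\Psi(\{a,b\}) = 0$, where $\{a,b\}$ is the semi-classical limit of the commutator in $H_{\text{reg}}$.

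The first step is to establish the two Behrend function identities of Joyce--Song for compactly supported sheaves $A, B$ on $W$:
\begin{align*}
\nu_{\mathfrak{M}}(A\oplus B) &= (-1)^{\chi(A,B)}\nu_{\mathfrak{M}}(A)\nu_{\mathfrak{M}}(B),\\
\chi\bigl(\mathbb{P}\Ext^1(A,B), \nu_{\mathfrak{M}}\bigr) - \chi\bigl(\mathbb{P}\Ext^1(B,A), \nu_{\mathfrak{M}}\bigr) &= \bigl(\text{ext}^1(B,A) - \text{ext}^1(A,B)\bigr)\,\nu_{\mathfrak{M}}(A\oplus B).
\end{align*}
These are proved exactly as in \cite[Section 5]{js} and \cite[Section 5]{t} once Theorem \ref{ssf} is available, because the arguments are purely local computations with Milnor fibers of the potential $f$ on a critical chart around $A\oplus B$; properness of the ambient threefold was only used to produce such a chart in the first place.

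Given the two identities, multiplicativity of $\Psi$ is derived by expanding the convolution $[V \xrightarrow{f} \mathfrak{M}_{\alpha}] * [V' \xrightarrow{g} \mathfrak{M}_{\gamma}]$ through the extension stack, stratifying the fiber over each isomorphism class of middle term by the projective extension space, and applying the first identity. For classes of dimension $\leq 1$ on a Calabi--Yau threefold, $\chi(\alpha,\gamma) = 0$, so the sign $(-1)^{\chi(A,B)}$ disappears and the weighted Euler characteristics multiply across the stratification. For the Poisson bracket part, writing the commutator of $[V]$ and $[V']$ in $H_{\text{reg}}$ as the difference of the two convolutions and dividing by $\mathbb{L}-1$, its image under $\Psi$ is controlled by the difference of weighted Euler characteristics appearing in the second identity; Serre duality in the Calabi--Yau setting gives $\text{ext}^1(A,B) = \text{ext}^2(B,A)$ and $\chi(A,B) = 0$ for dimension $\leq 1$ classes, which together force the right-hand side of the second identity to vanish, so $\Psi(\{a,b\}) = 0$.

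The main obstacle is establishing the two Behrend function identities in the non-proper setting of $W$. This is precisely what Theorem \ref{ssf} is designed to resolve: it produces a critical chart around any compactly supported sheaf on $W$, so the local Milnor fiber arguments of \cite{js} and \cite{t} translate verbatim. I do not anticipate further difficulties in assembling the pointwise identities into the global statement, since the integration map is defined via weighted Euler characteristics and all the relevant stratifications are constructible.
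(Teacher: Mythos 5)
Your proposal follows essentially the same strategy as the paper's proof: use Theorem \ref{ssf} to produce critical charts for compactly supported sheaves on $W$, conclude that the two Behrend-function identities of Joyce--Song \cite[Theorem 5.11]{js} (also Toda \cite{t}) hold verbatim in this non-proper setting, and then deduce that $\Psi$ is a Poisson homomorphism as in \cite[Theorem 5.14]{js} and Bridgeland \cite{b}. The paper compresses this into one paragraph by citing exactly those theorems; you have unpacked it, and your identification of the role of Theorem \ref{ssf} — to supply the chart that properness was needed for — matches the paper's intent precisely.

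One detail in the Poisson-bracket step is off, though the conclusion survives. You claim that Serre duality and $\chi(A,B)=0$ force the right-hand side $(\text{ext}^1(B,A)-\text{ext}^1(A,B))\,\nu(A\oplus B)$ of the second identity to vanish. In fact, Serre duality gives $\text{ext}^1(A,B)=\text{ext}^2(B,A)$, and then $\chi(B,A)=0$ yields $\text{ext}^1(B,A)-\text{ext}^1(A,B)=\hom(B,A)-\hom(A,B)$, which does not vanish in general (take $A$ a skyscraper sitting on the support of a pure $1$-dimensional $B$). The second identity alone therefore does not make the bracket computation collapse. What actually happens in the Joyce--Song/Bridgeland derivation is that the stratification of the extension stack also contributes $\operatorname{Hom}$ terms through the stabilizers, and after assembling everything the net coefficient is the (skew-symmetrized truncated) Euler form, which does vanish because $\chi(\alpha,\gamma)=0$ for compactly supported classes of dimension $\le 1$. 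So the correct statement is that the full formal argument of \cite[Theorem 5.14]{js} produces $\Psi(\{a,b\})=\chi(\alpha,\gamma)\cdot(\cdots)$, and $\chi=0$ is invoked only at the very end; the second identity by itself does not give a zero right-hand side. This is a local slip in your sketch rather than a gap in the strategy.
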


\begin{proof}
Theorem \ref{ssf} and the discussion in Subsection \ref{behrend} imply that for a sheaf $F$ with compact support,
the Behrend function can be computed by
$$\nu(F)=(-1)^{\text{dim}\,\text{Ext}^1_{W}(F, F)}\left(1-\chi(M_f)\right),$$ where $G$, $V$, and $f:V/G\to\mathbb{C}$ are as in Theorem \ref{ssf} and $M_f$ is the Milnor fiber of $f$ at any point in $\Theta^{-1}(I)$. The conclusion follows using localization techniques, see \cite[Theorem 5.11]{js} and \cite[Theorem 5.14]{js}.
\end{proof}

\subsubsection{} The Hall algebras $K(\text{St}/\mathfrak{M})$, $H_{\text{reg}}$, $H_{\text{sc}}$, and the Poisson algebra $\mathbb{C}[\Delta]$ are naturally $\Delta$-graded. 
We consider the corresponding Laurent algebras $K(\text{St}/\mathfrak{M})_{\Phi}$, $H_{\text{reg},\Phi}$, $H_{\text{sc},\Phi}$, and $\mathbb{C}[\Delta]_{\Phi}$.
The integration map $$\Psi: H_{\text{sc}}\to \mathbb{C}[\Delta]$$ extends to a continuous integration map
$$\Psi: H_{\text{sc},\Phi}\to \mathbb{C}[\Delta]_{\Phi}.$$

\begin{defn}
A morphism of stacks $f: \mathfrak{B}\to \mathfrak{M}$ is called \textit{$\Phi$-finite} if for every $(\beta, n)\in N_{\leq 1}(W)$,
the substack $\mathfrak{B}_{\beta,n}=f^{-1}(\mathfrak{N}_{\beta,n})$ is an Artin stack of finite type, and there exists a Laurent subset $S\subset \Delta$ such that $\mathfrak{B}_{\beta,n}$ is non-empty unless $(\beta,n)\in S$. 
\end{defn}

A $\Phi$-finite morphism $f: \mathfrak{B}\to \mathfrak{M}$ defines
an element 
$$\sum_{(\beta, n)\in S}[\mathfrak{B}_{\beta, n}\to\mathfrak{M}]$$
of $K(\text{St}/\mathfrak{M})_{\Phi}$ and of the other Hall algebras.

\subsubsection{}
Recall the stability condition $\mu: \text{Coh}_{\leq 1}(W)\to S$ from Subsection \ref{defs}. 
For each $s\in S$, there are torsion pairs $(\textbf{T}_s, \textbf{F}_s)$ 
of 
$\text{Coh}_{\leq 1}(W)$ 
constructed as in Subsection \ref{stabi}. 
For $a=(\infty, 0)$, the pair $(\textbf{T}_a, \textbf{F}_a)$ is the torsion pair used in the definition of BS pairs; for $b=(\infty, \infty)$, the pair $(\textbf{T}_b, \textbf{F}_b)$ is the torsion pair used in the definition of PT pairs. Recall the definition of $\text{SS}(I)$ from Subsection \ref{stabi}.

\begin{defn}\label{deff}
(a) For $s\in S$, define $\text{PT}^s_{\beta, n}\subset \mathfrak{M}_{\beta, n}(\mathcal{O})$ as the locus of complexes $\left[\OO_{W}\xrightarrow{s} F\right]$ with $[F]=(\beta, n)\in N_{\leq 1}(W)$, $F\in \textbf{F}_s$, and $\text{coker}(s)\in \textbf{T}_s$. 

Define $\text{PT}^s_{\pi, m, n}\subset \text{PT}^s_{\beta, n}$ as the locus of complexes $F$ in $\mathfrak{N}_{\pi, m, n}(\mathcal{O})$ for $\beta=m[C]+[\pi]\in H_2(W)$.  Let 
\begin{align*}
    \text{PT}^s&=
    \bigsqcup_{\beta, n}
    \text{PT}^s_{\beta, n},\\
    \text{PT}^s_{\pi}&=
    \bigsqcup_{m, n}\text{PT}^s_{\pi, m, n}.
\end{align*}

(b) If $I\subset S$ an interval, denote by $1_{\text{SS}(I)}$ the sublocus of $\mathfrak{M}(\mathcal{O})$ represented by sheaves in $\text{SS}(I)$, and by $1_{\text{SS}(I)}^{\OO}$ the sublocus of $\mathfrak{M}(\mathcal{O})$ represented by complexes $\left[\OO_{W}\xrightarrow{s} F\right]$ with $F$ in $\text{SS}(I)$.

(c) Define $\text{DT}_{\pi, m, n}\subset \mathfrak{M}_{\pi, m, n}(\mathcal{O})$ as the locus of complexes $\left[\OO_{W}\xrightarrow{s} F\right]$ with $s$ surjective and $F$ in $\mathfrak{N}_{\pi, m, n}$. For $s\in S$, define $\text{DT}^s_{\pi, m, n}\subset \text{DT}_{\pi, m, n}$ as the locus with $F\in \textbf{F}_s$. Let 
\begin{align*}
    \text{DT}_\pi&=
    \bigsqcup_{m, n}
    \text{DT}_{\pi, m, n},\\
    \text{DT}^s_{\pi}&=
    \bigsqcup_{m, n}\text{DT}^s_{\pi, m, n}.
\end{align*}
Let $\text{DT}_{\text{tors}}\subset \mathfrak{M}(\mathcal{O})$ be the locus of complexes $\left[\OO_{W}\xrightarrow{s} F\right]$ with $s$ surjective
and $F$ zero dimensional.
\end{defn}

\subsection{Identities in the Hall algebra}
In this Subsection, we prove Theorem \ref{halln} using the restriction to $\mathfrak{N}_\pi$ of the identities in the Hall algebra $K(\text{St}/\mathfrak{M})$
established by Bryan--Steinberg \cite{bs}. We revisit their argument and explain how to modify it to obtain Theorem \ref{halln}.

\subsubsection{} The following is \cite[Lemmas 67 and 68]{bs}.

\begin{prop}
(a) Let $s\in \{a, b\}$. Then
$\left[\text{PT}^s_{\pi} \to\mathfrak{M}\right]$,
$
\left[\text{DT}_{\pi}\to\mathfrak{M}\right]$,
$\left[\text{DT}^s_{\pi} \to\mathfrak{M}\right]$
are elements of the Hall algebra $H_{\Phi}$. 

(b) Let $I\subset S$ be an interval bounded from below. Then 
$\left[1_{\text{SS}(I)}\to \mathfrak{M}\right]$ is an element of
of the Hall algebra $H_{\Phi}$.
\end{prop}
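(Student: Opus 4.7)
The plan is to check, for each class in the Proposition, that the associated morphism to $\mathfrak{M}$ is $\Phi$-finite; by construction this is equivalent to defining an element of $H_\Phi$. Recall that $\Phi$-finiteness requires (i) finite type of each graded piece $\mathfrak{B}_{\beta,n}$ over its image in $\mathfrak{M}_{\beta,n}$, and (ii) Laurent support in $\Delta$, i.e.\ for each $\beta$ a lower bound on the $n$ for which $\mathfrak{B}_{\beta,n}$ is nonempty.

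For part (a), the decisive input is the restriction to $\mathfrak{N}_\pi$: any parametrized $F$ satisfies $F|_{W\setminus C}\cong\mathcal{O}_{D_\pi}|_{W\setminus C}$, so all variation occurs in a neighborhood of the proper curve $C$ together with finitely many $0$-dimensional contributions. Finite type of each $\text{PT}^s_{\pi,m,n}$, $\text{DT}_{\pi,m,n}$, and $\text{DT}^s_{\pi,m,n}$ then follows from standard PT/DT boundedness for proper supports of fixed Euler characteristic, combined with openness of the torsion-pair conditions defining $\textbf{F}_s$ and $\textbf{T}_s$ (Subsection \ref{stabi}). For Laurent support I fix $m$ and bound $n=\chi(F)$ from below: in the DT case because $F$ is a quotient of $\mathcal{O}_W$ with fixed one-dimensional class, pinning $\chi$ from below by a Cohen--Macaulay minimal model; in the PT case $s=b$ because purity of $F$ forces it to contain such a Cohen--Macaulay subsheaf of controlled Euler characteristic; in the BS case $s=a$, the additional torsion-pair constraints only tighten this bound.

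For part (b), let $s_0$ be a lower bound for $I$ in $S=(-\infty,\infty]\times(-\infty,\infty]$. Boundedness of semistables of fixed class in a Harder--Narasimhan-admitting stability condition gives finite type of each graded piece. For Laurent support, the HN property lets me write any $F\in\text{SS}(I)$ as an iterated extension of semistables of slope $\geq s_0$; summing the slope inequalities along an HN filtration in the lexicographic order yields $\mu(F)\geq s_0$, which, unwound using $\mu(F)=\bigl(\chi(F)/(\operatorname{supp}(F)\cdot f^*L),\,\chi(F)/(\operatorname{supp}(F)\cdot H)\bigr)$, gives the required lower bound on $n$ once $\beta$ is fixed.

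The main obstacle, shared between (a) and (b), is the bookkeeping of the lexicographically ordered slope at $0$-dimensional contributions, where $\mu$ takes its maximum value $(\infty,\infty)$ and the naive bound degenerates; this is precisely the point where I would follow the Bryan--Steinberg argument of \cite[Lemmas 67 and 68]{bs} most closely, splitting off the zero-dimensional torsion separately and treating it via the Laurent structure of $\mathbb{C}[\Delta]_\Phi$ in the $n$-variable for $\beta=0$.
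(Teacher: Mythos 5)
The paper offers no proof of this Proposition beyond the citation ``This is \cite[Lemmas 67 and 68]{bs}'', so there is no in-paper argument to compare against directly; your sketch is a reconstruction of what that reference supplies, adapted to the non-proper $W$.

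Your overall structure — reduce to $\Phi$-finiteness, then check finite type of each graded piece and the Laurent bound on $\chi$ for each fixed curve class — is the right one and is what Bryan--Steinberg's argument does. You correctly identify that the essential input in part (a) is the restriction to $\mathfrak{N}_\pi$ (so that the sheaves are fixed away from the proper configuration $C\cup\bigcup L_i$ and compact-support boundedness applies), and in part (b) that the lower bound $s_0$ on $I$ combined with the see-saw/HN argument forces a lower bound on $\chi(F)$ for each fixed support class. Two small cautions. First, ``summing the slope inequalities'' is not quite the mechanism: the slope $\mu$ is a ratio, not additive, so what you actually use is the see-saw property to conclude $\mu(F)\geq\mu(\mathrm{gr}_nF)\geq s_0$ and then unwind the first coordinate $\chi(F)/(\mathrm{supp}(F)\cdot f^*L)$ to bound $n$. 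Second, the degeneration you flag at the $(\infty,\infty)$ corner is real but not where the subtlety lives: zero-dimensional contributions have $\chi\geq 1$ and can only raise $n$. The delicate case is rather sheaves (or HN factors) supported purely on exceptional curves, where $\mathrm{supp}\cdot f^*L=0$ and the first slope coordinate is $\infty$, so only the second coordinate $\chi/(\mathrm{supp}\cdot H)$ gives information; there one needs Bryan--Steinberg's two-coordinate lexicographic construction (their Lemma 39) to recover a finite lower bound on $\chi$. Since you explicitly defer to \cite[Lemmas 67--68]{bs} at exactly that point, as the paper itself does, this is not a gap in the argument but rather an accurate identification of where the cited work carries the weight.
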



We next relate generating series of BS and PT invariants from Theorem \ref{halln} to elements in $H_{\text{sc},\Phi}$.

\begin{prop}\label{inte} Denote by $z=q^{[C]}$ and abuse notation and write $q=q^1$ for the generators of $\mathbb{C}[\Delta]_{\Phi}$.
Consider the integration map $\Psi: H_{\text{sc},\Phi}\to \mathbb{C}[\Delta]_{\Phi}$. Then there is a constant $d_{\pi}$ depending only on $\pi$ such that

(a) $\Psi\left(\left[\text{PT}_{\pi}^a\to\mathfrak{M}\right]\right)=(-1)^{d_{\pi}}
\text{BS}_{\pi}(-q, z)$,

(b)
$\Psi\left(\left[\text{PT}_{\pi}^b\to\mathfrak{M}\right]\right)=(-1)^{d_{\pi}}\text{PT}_{\pi}(-q, z)$,

(c) $\Psi\left(\left[\text{PT}_{0}^b\to\mathfrak{M}\right]\right)=\text{PT}_{0}(-q, z)$.
\end{prop}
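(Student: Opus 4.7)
The plan is to identify $\Psi([\text{PT}^s_\pi \to \mathfrak{M}])$ with a Behrend-weighted Euler characteristic, reduce to the generic $\mathbb{C}^* \subset T$ fixed locus by the invariance of constructible functions, and then apply Behrend's theorem to the symmetric perfect obstruction theories supplied by Proposition \ref{prof}. Parts (a), (b), (c) run on the same template with $s=a$, $s=b$, and $(\pi,s)=(0,b)$ respectively.

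First I would unpack the integration map. Since $\text{PT}^s_\pi$ is a disjoint union of finite-type algebraic spaces mapping representably to $\mathfrak{M}$, the definition of $\Psi$ gives
\[
\Psi([\text{PT}^s_\pi \to \mathfrak{M}]) = \sum_{m \geq 0,\, n \in \mathbb{Z}} \chi(\text{PT}^s_{\pi, m, n}, \nu)\, q^{[\pi]+m[C]+n},
\]
where $\nu$ is the Behrend function of $\mathfrak{M}(\mathcal{O})$. Its pointwise formula $\nu(I) = (-1)^{\dim \text{Ext}^1(I,I)}(1-\chi(M_I))$ follows from Theorem \ref{ssf} applied to $W$: although $W$ is non-proper, the compact support of the sheaves makes the local critical chart construction go through and produces a well-defined constructible Behrend function.

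Next I would localize. Since $\nu$ is $T$-invariant, for every $\mathbb{C}^*$-equivariant constructible subset one has $\chi(X, \nu) = \chi(X^{\mathbb{C}^*}, \nu|_{X^{\mathbb{C}^*}})$. By the description of $T$-fixed BS and PT pairs intersecting $S$ transversally in Subsection \ref{sss}, transferred from $\mathbb{P}$ to the analogous local geometry on $W$, the $\mathbb{C}^*$-fixed locus of $\text{PT}^s_{\pi, m, n}$ decomposes as $\bigsqcup_k \text{BS}_n(\pi, m)_k$ when $s=a$, and as $\bigsqcup_k \text{PT}_n(\pi, m)_k$ when $s=b$. Each component is proper and carries the symmetric POT of Proposition \ref{prof}, whose virtual dimension is zero, so Behrend's theorem gives
\[
[\text{BS}_n(\pi, m)_k]^{\text{vir}} = \chi(\text{BS}_n(\pi, m)_k, \nu_{\text{fix}}),
\]
with $\nu_{\text{fix}}$ the intrinsic Behrend function of the fixed-locus obstruction theory (and analogously for $\text{PT}_n(\pi,m)_k$).

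The core of the argument, and the step I expect to be the main obstacle, is the sign comparison between the restriction $\nu|_{\text{fix}}$ and the intrinsic $\nu_{\text{fix}}$. Locally at a $T$-fixed complex $I$, the equivariant Joyce--Song chart of Theorem \ref{ssf} splits into a fixed and a moving part; the moving part pairs the weight-$w$ and weight-$(-w)$ subspaces of $\text{Ext}^1(I,I)$ and $\text{Ext}^2(I,I)$ via Serre duality, so it contributes a purely even-dimensional smooth ambient factor together with a Milnor-fiber sign of $(-1)^{\dim \text{Ext}^1(I,I)^m}$. Rewriting this parity modulo $2$ as $\chi(I,I)^+$, as in the proof of Proposition \ref{nee2}, and splitting off the baseline contribution from $\mathcal{I}_{D_\pi}$ yields
\[
\nu(I) = (-1)^{d_\pi + \ell(k) + n}\, \nu_{\text{fix}}(I),
\]
where $d_\pi$ is the $\pi$-dependent sign coming from $\chi(\mathcal{I}_{D_\pi}, \mathcal{I}_{D_\pi})^+ \bmod 2$ and $(-1)^n$ records the $\chi$-dependent part of $\dim \text{Ext}^1(I,I)$. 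Summing over $k, m, n$ and absorbing $(-1)^n$ into the substitution $q \mapsto -q$ produces the identity claimed in (a); part (b) is identical with $s=b$; and part (c) is the case $\pi = 0$, where the fixed locus consists of complexes supported on $C$ and directly reproduces $\text{PT}_0(q,z)$. The sign bookkeeping mirrors the argument of Bryan--Steinberg \cite{bs} restricted to the $\mathfrak{N}_\pi$ strata; the principal subtlety is verifying that the extra weight contributions coming from the partition profiles along the legs $L_i$ cancel in Serre-dual pairs, which is exactly what the Calabi--Yau hypothesis on $W$ guarantees.
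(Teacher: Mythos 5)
Your plan follows the same skeleton as the paper's proof: interpret $\Psi$ as a Behrend-weighted Euler characteristic, reduce to the $T$-fixed locus, compare the restricted Behrend function to the intrinsic one on the fixed component via the parity of $\dim\Ext^1(I,I)^m \equiv \chi(I,I)^+ \pmod 2$, and then peel off the $\pi$-independent baseline $\chi(\mathcal{I}_{D_\pi},\mathcal{I}_{D_\pi})^+ = d_\pi$ so that the remaining $(-1)^{\ell(k)}$ matches the definition of $\text{BS}'_n(\pi,m)_k$ and $\text{PT}'_n(\pi,m)_k$. One imprecision worth correcting: the integration map $\Psi$ pulls back the Behrend function of $\mathfrak{M}$, not of $\mathfrak{M}(\mathcal{O})$, and the paper handles the passage between the two and the resulting factor $(-1)^n$ by invoking \cite[Lemma 26]{bs} outright, whereas you attribute the $(-1)^n$ to ``the $\chi$-dependent part of $\dim\Ext^1(I,I)$,'' which is vaguer and does not obviously come out of the Serre-duality pairing that accounts for the moving-part sign; if you want to avoid citing that lemma you would need to spell out the comparison of Behrend functions along the forgetful map $\mathfrak{M}(\mathcal{O})\to\mathfrak{M}$ explicitly.
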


\begin{proof} 
Let $m\geq 0$, $n\in\mathbb{Z}$, and $\beta=m[C]+[\pi]\in H_2(W, \mathbb{Z})$. 
The spaces $\text{PT}^s_{\beta, n}$ are $T$-invariant for $s\in\{a, b\}$.
It suffices to show that there is a constant $d_\pi$ depending only on $\pi$ such that
\begin{align*}
    \Psi\left(\left[\text{PT}^a_{\pi, m, n}\to\mathfrak{M}\right]\right)&=(-1)^{d_\pi+n}\text{BS}'_n(\pi, m),\\
    \Psi\left(\left[\text{PT}^b_{\pi, m, n}\to\mathfrak{M}\right]\right)&=(-1)^{d_\pi+n}\text{PT}'_n(\pi, m).
\end{align*}
Using \cite[Lemma 26]{bs}, it suffices to show that
\begin{align*}
    \Psi\left(\left[\text{PT}^a_{\pi, m, n}\to\mathfrak{M}(\mathcal{O})\right]\right)&=(-1)^{d_\pi}\text{BS}'_n(\pi, m),\\
    \Psi\left(\left[\text{PT}^b_{\pi, m, n}\to\mathfrak{M}(\mathcal{O})\right]\right)&=(-1)^{d_\pi}\text{PT}'_n(\pi, m).
\end{align*}
Denote by $B_{\beta, n}$ the set of connected components of the $T$-fixed locus of $\text{BS}_n(W, \beta)$. For $k$ in $B_{\beta, n}$, let
$\varepsilon(k):=\chi(I,I)^+-\chi(\mathcal{O}, \mathcal{O})^+=\dim \text{Ext}^1(I, I)^m$ for $I$ a $T$-fixed BS complex in the connected component indexed by $k$, see Subsection \ref{lrbspt2} for the notation. 
Define similarly $P_{\beta, n}$ and $\ell(k)$ for $k$ in $P_{\beta, n}$. 
We have that
\begin{align*}
    \Psi\left(\left[\text{PT}^a_{\beta, n}\to\mathfrak{M}(\mathcal{O})\right]\right)&=\int_{\text{BS}_n(W, \beta)}\nu d\chi=\sum_{k\in B_{\beta, n}} (-1)^{\varepsilon(k)}\left[\text{BS}_n(W, \beta)_k\right]^{\text{vir}},\\
    \Psi\left(\left[\text{PT}^b_{\beta, n}\to\mathfrak{M}(\mathcal{O})\right]\right)&=\int_{\text{PT}_n(W, \beta)}\nu d\chi=\sum_{k\in P_{\beta, n}}(-1)^{\varepsilon(k)}\left[\text{PT}_n(W, \beta)_k\right]^{\text{vir}}.
\end{align*}
Indeed, the first equalities follow from Subsection \ref{behrend} and the second from the localization formula and from the analogue of Proposition \ref{nee2} in this context. 

Denote by $B'_{\beta, n}$ the subset of $B_{\beta, n}$ of connected components corresponding to BS complexes $\left[\mathcal{O}\xrightarrow{s} F\right]$ for $F$ in $\mathfrak{N}_{\pi, m, n}$. Let $B^o_{\beta, n}=B_{\beta, n}\setminus B'_{\beta, n}$. Define similarly $P'_{\beta, n}$ and $P^o_{\beta, n}$. By Subsection \ref{behrend} and the localization formula, we have that 
\begin{align*}
    \Psi\left(\left[\text{PT}^a_{\beta, n}\setminus \text{PT}^a_{\pi, m, n}\to\mathfrak{M}(\mathcal{O})\setminus \mathfrak{M}_{\pi}(\mathcal{O})\right]\right)&=\sum_{k\in B^o_{\beta, n}} (-1)^{\varepsilon(k)}\left[\text{BS}_n(W, \beta)_k\right]^{\text{vir}},\\
    \Psi\left(\left[\text{PT}^b_{\beta, n}\setminus \text{PT}^b_{\pi, m, n} \to\mathfrak{M}(\mathcal{O})\setminus \mathfrak{M}_{\pi}(\mathcal{O})\right]\right)&=\sum_{k\in P^o_{\beta, n}}(-1)^{\varepsilon(k)}\left[\text{PT}_n(W, \beta)_k\right]^{\text{vir}}.
\end{align*} The sets $B'_{\beta, n}$ and $B_{\pi, m, n}$ are naturally isomorphic; identify them via this isomorphism. For $k\in B_{\pi, m, n}$, we have that $\text{BS}_n(W,\beta)_k\cong \text{BS}_n(\pi, m)_k$ because they both parameterize complexes $[\mathcal{O}\to \mathcal{F}]$ intersecting the complement of $N$ transversely and satisfy the same hypotheses when restricted to $N$. Further, their perfect obstruction theories are naturally isomorphic and thus $\left[\text{BS}_n(W,\beta)_k\right]^{\text{vir}}=\left[\text{BS}_n(\pi, m)_k\right]^{\text{vir}}$.
We thus obtain that
\begin{align*}
    \Psi\left(\left[\text{PT}^a_{\pi, m, n}\to\mathfrak{M}(\mathcal{O})\right]\right)&=\sum_{k\in B'_{\beta, n}}(-1)^{\varepsilon(k)}\text{BS}'_n(\pi, m)_k,\\
    \Psi\left(\left[\text{PT}^b_{\pi, m, n}\to\mathfrak{M}(\mathcal{O})\right]\right)&=\sum_{k\in P'_{\beta, n}}(-1)^{\varepsilon(k)}\text{PT}'_n(\pi, m)_k.
\end{align*}
For $k$ in $B_{\pi, m, n}$, we have that $\varepsilon(k)=\ell(k)+\chi\left(\mathcal{I}_{D_\pi}, \mathcal{I}_{D_\pi}\right)^+-\chi(\mathcal{O}, \mathcal{O})^+=
\ell(k)+\dim \text{Ext}^1\left(\mathcal{I}_{D_\pi}, \mathcal{I}_{D_\pi}\right)^m$, see the argument using Serre duality from Subsection \ref{lrbspt2} for the second equality, and thus parts (a) and (b) follow for $d_\pi=\dim \text{Ext}^1\left(\mathcal{I}_{D_\pi}, \mathcal{I}_{D_\pi}\right)^m$. When $\pi=0$, we have that $d_0=\dim \text{Ext}^1(\mathcal{O}, \mathcal{O})^m=0$ because $\text{Ext}^1(\mathcal{O}, \mathcal{O})=0$ and thus part (c) follows.
\end{proof}

\subsubsection{} The following is the main result which implies Theorem \ref{halln}.

\begin{prop}\label{eq} Let $s\in \{a, b\}$. Then \[\text{DT}_{\pi}*1_{\textbf{T}_s}=\text{DT}_{\pi}^s*1_{\textbf{T}_s}*\text{PT}_{\pi}^s.\]
\end{prop}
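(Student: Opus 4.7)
The plan is to adapt the Hall algebra argument of Bryan--Steinberg \cite{bs} for the analogous identity without a fixed profile, and restrict all constructions to the substack $\mathfrak{N}_\pi\subset\mathfrak{M}$. By the preceding proposition, both sides are well-defined elements of $H_\Phi$, so the identity is a meaningful equality of Laurent Hall algebra elements.

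The core step is to interpret each side as the stack of objects of $\textbf{A}$ equipped with a compatible filtration, and then construct a natural stacky bijection between them using the torsion pair $(\textbf{T}_s,\textbf{F}_s)$. The LHS $\text{DT}_\pi*\mathbf{1}_{\textbf{T}_s}$ parametrizes complexes $K=[\mathcal{O}_W\to F']$ equipped with a DT sub-pair $K_1=[\mathcal{O}_W\twoheadrightarrow F]$ such that $F\in\mathfrak{N}_\pi$ and $F'/F\in\textbf{T}_s$. The RHS $\text{DT}^s_\pi*\mathbf{1}_{\textbf{T}_s}*\text{PT}^s_\pi$ parametrizes $K$ equipped with a two-step filtration whose successive quotients lie in $\text{DT}^s_\pi$, $\textbf{T}_s$, and $\text{PT}^s_\pi$. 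Following \cite{bs}, the bijection is obtained by applying the torsion-pair decompositions $0\to F_t\to F\to F_f\to 0$ and $0\to F'_t\to F'\to F'_f\to 0$, and then checking that the resulting pair $[\mathcal{O}_W\to F'_f]$ is a PT$^s$-pair (its cokernel is a quotient of $F'/F\in\textbf{T}_s$, hence in $\textbf{T}_s$) while the pair induced on $F_f$ is a DT$^s$-pair.

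The key observation making this restrict cleanly to $\mathfrak{N}_\pi$ is that for $s\in\{a,b\}$ every sheaf $T\in\textbf{T}_s$ has support contained in $C$ together with isolated points of $W$: for $s=a$ this follows from $Rh_*T$ being zero-dimensional and $h$ contracting only the curve $C$, while for $s=b$ one even has $\textbf{T}_b=\text{Coh}_{\leq 0}(W)$. On the other hand, any $F\in\mathfrak{N}_\pi$ satisfies $F|_{W\setminus C}\cong\mathcal{O}_{D_\pi}|_{W\setminus C}$, which is Cohen--Macaulay and has no subsheaves supported on $C$ from outside. Hence any $\textbf{T}_s$-subsheaf of $F$ is supported on $C$, and the torsion-pair decomposition preserves the restriction to $W\setminus C$; in particular $F_f\in\mathfrak{N}_\pi$ for appropriate invariants, and conversely any extension of an $\mathfrak{N}_\pi$-sheaf by a $\textbf{T}_s$-sheaf stays in $\mathfrak{N}_\pi$. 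This is exactly what is needed so that the Bryan--Steinberg bijection restricts to one between the three specific loci $\text{DT}_\pi$, $\text{DT}^s_\pi$, $\text{PT}^s_\pi$.

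The main obstacle is the bookkeeping needed to verify that this stacky bijection is an equivalence modulo the relations defining $K(\text{St}/\mathfrak{M})_\Phi$ (in particular, that it is a Zariski fibration of the appropriate relative dimension), so that the two filtered moduli represent the same class in the relative Grothendieck group. These verifications, however, are a direct transcription of the corresponding steps in \cite{bs}, since the only additional constraint beyond the ambient argument is the profile-preservation property established above.
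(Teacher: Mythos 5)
The proof you propose does not match the argument the paper actually uses, and the mismatch is not cosmetic. The paper (following Bridgeland \cite{b} and Bryan--Steinberg \cite{bs}) proves this identity by a limit argument in the Laurent-completed Hall algebra: one uses the chain of identities in Proposition~\ref{iden}
to write both sides as limits as $\tau\to-\infty$ of expressions involving $1_{\widetilde{\text{SS}[\tau,s)}}$, and then exploits the \emph{invertibility} of $1_{\widetilde{\text{SS}[\tau,s)}}$ (which rests on Joyce's no-pole theorem, cf.\ Proposition~\ref{nopole}) to cancel and conclude. The element $1_{\textbf{T}_s}$ appearing on both sides of the identity is not invertible, so one cannot simply cancel it; passing to the bounded intervals $[\tau,s)$, where one does have an invertible element, is precisely the point of the argument. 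Your proposal replaces this with a purported direct stacky bijection between the two filtered moduli, which is not what \cite{bs} does --- their Proposition~75 and Bridgeland's Proposition~6.5 are both proved by the limit technique, not by a bijection.

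Moreover, the bijection you sketch does not actually exist. On the left you have a pair $B=[\mathcal{O}_W\to F']$ with a sub-pair $[\mathcal{O}_W\twoheadrightarrow F]\in\text{DT}_\pi$ and $F'/F\in\textbf{T}_s$. To produce the three-step filtration on the right, the bottom piece $B_1\in\text{DT}^s_\pi$ must be a \emph{subobject} of $B$ with torsion-free sheaf component. But the torsion-pair decomposition $0\to F_t\to F\to F_f\to 0$ that you invoke produces $F_f$ as a \emph{quotient} of $F$, not a subsheaf of $F'$; and the assignment $F\mapsto F_f$ does not extend $\mathcal{O}_W$-equivariantly to a subobject of $B$. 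Similarly, $[\mathcal{O}_W\to F'_f]$ is a quotient, and you are missing the middle term $B_2$ entirely --- there is no canonical way to interpolate. This is exactly the kind of difficulty the limit trick is designed to bypass: one does not exhibit a bijection but instead shows that both sides approximate $1^{\OO}_{\widetilde{\text{SS}(\geq\tau)}}$ as $\tau\to-\infty$ and then cancels.

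What you get right, and what the paper also records, is the auxiliary observation that the whole construction restricts cleanly to the profile-preserving substack $\mathfrak{N}_\pi$: extensions by $\textbf{T}_s$-sheaves (which for $s\in\{a,b\}$ are supported on $C$ together with points) preserve the condition $F|_{W\setminus C}\cong\mathcal{O}_{D_\pi}|_{W\setminus C}$, and $\widetilde{\textbf{T}_s}=\textbf{T}_s$. That observation is why the paper can work with the tilde-decorated stacks $\widetilde{\text{DT}_\pi}$, $\widetilde{\text{PT}^s_\pi}$, etc., and then restrict. But this is the ``easy'' part of the proof; the substantive content --- the Hall-algebra identities of Proposition~\ref{iden} and the convergence/cancellation argument --- is absent from your sketch. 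You should redo the proof following the limit argument rather than trying to construct a bijection.
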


Before we prove Proposition \ref{eq}, we need to introduce a few more notations. Fix $\pi$ a partition. Let $\textbf{A}$ be the abelian subcategory of $\text{Coh}_{\leq 1}(W)$ generated by sheaves $F$ as in \eqref{eq123}. It has torsion pairs $\left(\widetilde{\textbf{T}_s}, \widetilde{\textbf{F}_s}\right)$ constructed using the slope function $\mu$. We define the analogous stacks from Definition \eqref{deff} for the torsion pairs $\left(\widetilde{\textbf{T}_s}, \widetilde{\textbf{F}_s}\right)$, for example \begin{itemize}
    \item $\widetilde{\text{PT}^s_{\beta, n}}\subset \mathfrak{M}_{\beta, n}$ is the the locus of complexes $\left[\OO_{W}\xrightarrow{s} F\right]$ with $[F]=(\beta, n)\in N_{\leq 1}(W)$, $F\in \widetilde{\textbf{F}_s}$, and $\text{coker}(s)\in \widetilde{\textbf{T}_s}$,
    \item for $I\subset S$ an interval, $\widetilde{\text{SS}(I)}$ is the subcategory of $\text{SS}(I)$ of sheaves in $\textbf{A}$ and $1_{\widetilde{\text{SS}(I)}}$ is the sublocus of $\mathfrak{M}(\mathcal{O})$ represented by sheaves in $\widetilde{\text{SS}(I)}$
\end{itemize}
etc. Observe that $\widetilde{\textbf{T}_s}=\textbf{T}_s$ for $s\in\{a, b\}$.

\begin{prop}\label{iden} The following identities are true in the Hall algebra $H_{\text{sc},\Phi}$ for any $s\in \{a, b\}$:

(a) $1_{\widetilde{\textbf{T}_s}}*1_{\widetilde{\text{SS}[\tau,s)}}=1_{\widetilde{\text{SS}(\geq \tau)}}$ for $\tau<s$.

(b) $1^{\OO}_{\widetilde{\textbf{T}_s}}*1^{\OO}_{\widetilde{\text{SS}[\tau,s)}}=
1^{\OO}_{\widetilde{\text{SS}(\geq \tau)}}$ for $\tau<s$.

(c) $\widetilde{\text{DT}^s_{\pi}}*
1_{\widetilde{\textbf{T}_s}}=
1^{\OO}_{\widetilde{\textbf{T}_s}}.$

(d) $\widetilde{\text{PT}^s_\pi}*
1_{\widetilde{\text{SS}[\tau,s)}}-
1^{\OO}_{\widetilde{\text{SS}[\tau,s)}}\to 0$ as $\tau\to-\infty$. 

(e) $\widetilde{\text{DT}_\pi}*
1_{\widetilde{\text{SS}(\geq \tau)}}-1^{\OO}_{\widetilde{\text{SS}(\geq \tau)}}\to 0$ as $\tau\to-\infty$.
\end{prop}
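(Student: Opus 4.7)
The plan is to establish each of (a)--(e) as an analogue of the corresponding Hall algebra identity in Bryan--Steinberg \cite{bs}, adapted to the subcategory $\textbf{A}$. The only new ingredient beyond \cite{bs} is verifying that restricting to $\textbf{A}$ is compatible with the Harder--Narasimhan and torsion pair decompositions appearing in the proof. I would first check that $\textbf{A}$ is preserved under subobjects, quotients, and the relevant extensions: since $\textbf{A}$ is defined by a condition on the restriction to the open set $W\setminus C$, and restriction to an open is exact, subquotients of objects in $\textbf{A}$ restrict to subquotients of $\OO_{D_\pi}|_{W\setminus C}$ and so remain in $\textbf{A}$. In particular the stability condition $\mu$ restricts coherently, and the torsion pair $(\widetilde{\textbf{T}_s},\widetilde{\textbf{F}_s})$ is simply the intersection of $(\textbf{T}_s,\textbf{F}_s)$ with $\textbf{A}$.

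Identities (a) and (b) are instances of the uniqueness of the HN split at slope $s$. Given $F\in\widetilde{\text{SS}(\geq\tau)}$, the HN filtration truncated at slope $s$ produces the unique short exact sequence $0\to T\to F\to Q\to 0$ with $T\in\widetilde{\text{SS}(\geq s)}=\widetilde{\textbf{T}_s}$ and $Q\in\widetilde{\text{SS}[\tau,s)}$; uniqueness in families propagates from the orthogonality $\Hom(\widetilde{\text{SS}(\geq s)},\widetilde{\text{SS}[\tau,s)})=0$, yielding (a). For (b) the same argument runs inside Toda's category $\langle\OO_W[1],\text{Coh}_{\leq 1}(W)\rangle_{\text{exc}}$ with $\OO_W[1]$ treated as the top slope, so that any section of $F$ factors through the $\widetilde{\textbf{T}_s}$-part of the filtration. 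For (c), given $[\OO\xrightarrow{\sigma} F]$ with $F\in\widetilde{\textbf{T}_s}$, I would set $F_1:=\text{Im}(\sigma)\cong\OO_D$: then $F_1$ is cyclic with a surjective section, so $[\OO\twoheadrightarrow F_1]$ lies in $\widetilde{\text{DT}_\pi^s}$, while $F/F_1$ is a quotient of $F$ and thus stays in $\widetilde{\textbf{T}_s}$; the resulting exact sequence in $\textbf{A}$ is unique.

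Identities (d) and (e) are convergence statements in the Laurent topology on $H_\Phi$ rather than strict equalities, and this is where I expect the main obstacle. Working degree by degree in $\Delta$, for each fixed $(\beta,n)\in N_{\leq 1}(W)$ only finitely many splittings $(\beta_1,n_1)+(\beta_2,n_2)=(\beta,n)$ contribute to the relevant products, so the slopes of the sheaves appearing in the factor $\widetilde{\text{PT}_\pi^s}$ (resp.\ $\widetilde{\text{DT}_\pi}$) are uniformly bounded below. Once $\tau$ lies strictly below all these slopes, the nontrivial contributions to $\widetilde{\text{PT}_\pi^s}*1_{\widetilde{\text{SS}[\tau,s)}}$ (resp.\ $\widetilde{\text{DT}_\pi}*1_{\widetilde{\text{SS}(\geq\tau)}}$) collapse into $1^{\OO}_{\widetilde{\text{SS}[\tau,s)}}$ (resp.\ $1^{\OO}_{\widetilde{\text{SS}(\geq\tau)}}$) in that graded piece. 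The key technical point is that the uniform boundedness statements from \cite{bs} survive the restriction to $\textbf{A}$, which follows because the fixed-restriction condition on $W\setminus C$ only narrows the collection of sheaves involved and preserves the required finiteness.
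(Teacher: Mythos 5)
Your proposal is correct and follows the same approach as the paper: the paper simply cites the analogous identities from Bridgeland and Bryan--Steinberg for the full category $\text{Coh}_{\leq 1}(W)$ and observes they transfer to $\textbf{A}$, and you flesh out why — namely that $\textbf{A}$ is closed under subquotients and extensions (so HN filtrations and the torsion pair decompositions of objects of $\textbf{A}$ have all their pieces in $\textbf{A}$), so the proofs in loc.\ cit.\ run unchanged.
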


\begin{proof}
The analogous identities for the category $\text{Coh}_{\leq 1}(W)$ with torsion pair $(\textbf{T}_s, \textbf{F}_s)$ were proved by Bridgeland \cite{b} for $s=b$ and Bryan-Steinberg \cite{bs} for $s=a$. These identities are proved in the same way.
Parts (a) and (b) use that $\widetilde{\textbf{T}_s}$ and $\widetilde{\text{SS}[\tau, s)}$ are a torsion pair for $\widetilde{\text{SS}(\geq \tau)}$, see Lemma 55 and Lemma 69 for (a), Lemma 57 for (b) in loc. cit. For $s=a$, see Lemma 59 for (c), Lemma 73 for (d) in loc. cit, and for (e) see Lemma 71 in loc. cit. For $s=b$, see \cite[Lemma 4.3]{b} for part (c) and \cite[Equation 39]{b} for part (d). 
\end{proof}

\begin{proof}[Proof of Proposition \ref{eq}]
We show that for $s\in \{a, b\}$, there are identities \begin{equation}\label{a1}
    \widetilde{\text{DT}_{\pi}}*1_{\widetilde{\textbf{T}_s}}=\widetilde{\text{DT}_{\pi}^s}*1_{\widetilde{\textbf{T}_s}}*\widetilde{\text{PT}_{\pi}^s}.\end{equation} The proof is the same as for the analogous identities for the full category $\text{Coh}_{\leq 1}(W)$, see \cite[Proposition 75]{bs} for $s=a$ and \cite[Proposition 6.5]{b} for $s=b$. 
The restriction of \eqref{a1} over $\mathfrak{N}_\pi$ gives the desired identities.
We abuse notation and write $\widetilde{\text{DT}_\pi}$ instead of $\left[\widetilde{\text{DT}_\pi} \to\mathfrak{M}\right]\in H_{\Phi}$ etc. in order to simplify the notation.

By Proposition \ref{iden}, part (e), we have that
\[\lim_{\tau\to-\infty}
\left(\widetilde{\text{DT}_\pi}*1_{\widetilde{\text{SS}(\geq \tau)}}-
1^{\OO}_{\widetilde{\text{SS}(\geq \tau)}}\right)\to 0.\] Using Proposition \ref{iden} part (a) and (b), this can be rewritten as
\begin{equation}\label{last}
    \lim_{\tau\to-\infty}
\left(\widetilde{\text{DT}_\pi}*
1_{\widetilde{\text{SS}(\geq s)}}*1_{\widetilde{\text{SS}[\tau,s)}}-1^{\OO}_{\widetilde{\text{SS}(\geq s)}}*1^{\OO}_{\widetilde{\text{SS}[\tau,s)}}\right)\to 0.\end{equation}
Using Proposition \ref{iden}, part (d), we also have that
\[\lim_{\tau\to-\infty}\left(\widetilde{\text{PT}^s_\pi}*1_{\widetilde{\text{SS}[\tau,s)}}-
1^{\OO}_{\widetilde{\text{SS}[\tau, s)}}\right)\to 0.\]
Multiply the above relation on the left by $1^{\OO}_{\widetilde{\text{SS}(\geq s)}}$, who by Proposition \ref{iden}, part (c), equals $\widetilde{\text{DT}^s_\pi}*1_{\widetilde{\text{SS}(\geq s)}}$ to obtain that
\[ \lim_{\tau\to-\infty}
\left(\widetilde{\text{DT}^s_\pi}*1_{\widetilde{\text{SS}(\geq s)}}*
\widetilde{\text{PT}^s_\pi}*
1_{\widetilde{\text{SS}[\tau,s)}}-
1^{\OO}_{\widetilde{\text{SS}(\geq s)}}*1^{\OO}_{\widetilde{\text{SS}[\tau, s)}}\right)\to 0.\] 
Combining this relation with \eqref{last}, we obtain that
\[ \lim_{\tau\to-\infty}\left(\widetilde{\text{DT}^s_\pi}*1_{\widetilde{\text{SS}(\geq s)}}*
\widetilde{\text{PT}^s_\pi}-
\widetilde{\text{DT}_\pi}*1_{\widetilde{\text{SS}(\geq s)}}\right)*1_{\widetilde{\text{SS}[\tau, s)}}\to 0.\]
The element $1_{\widetilde{\text{SS}[\tau,s)}}$ is invertible, see for the example the proof of \cite[Lemma 5.2]{b}, and thus we obtain the desired statement.
\end{proof}
 The following is \cite[Proposition 76 and Corollary 77]{bs}, and it is a corollary of Joyce's no-pole Theorem \cite[Theorem 3.11]{js}, \cite[Theorem 6.3]{b}.
 
\begin{prop}\label{nopole}
Let $\tau\in S$ be a slope. 
There exists $\nu_{\tau}=(\mathbb{L}-1)\varepsilon(\tau)\in H_{\text{reg}, \Phi}$ such that $$1_{\text{SS}(\tau)}=\exp(\varepsilon(\tau))\in H_{\Phi}.$$
Further, $1_{\text{SS}(\tau)}\in H_{\Phi}$ is invertible, the
automorphism $\text{Ad}_{1_{\text{SS}(\tau)}}:H_{\Phi}\to H_{\Phi}$ preserves
regular elements, and the induced Poisson automorphism of
$H_{\text{sc}}$ is given by $$\text{Ad}_{1_{\text{SS}(\tau)}}=\exp(\{\nu_{\tau},-
\}).$$
\end{prop}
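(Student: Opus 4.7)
The plan is to transport Joyce's no-pole theorem \cite[Theorem 3.11]{js}, \cite[Theorem 6.3]{b} to the present setting, and then to extract the Poisson-automorphism statement by passing from the commutator on $H_{\text{reg},\Phi}$ to the induced Poisson bracket on $H_{\text{sc},\Phi}$. First I would fix a slope $\tau\in S$ and observe that when restricted to semistable objects of slope exactly $\tau$, the Hall algebra has a pronilpotent structure: the substack $\mathfrak{M}_\tau\subset \mathfrak{M}$ of slope-$\tau$ semistables decomposes along its numerical class, and $1_{\text{SS}(\tau)}-1$ is supported away from the trivial class. This makes the power series $\varepsilon(\tau):=\log(1_{\text{SS}(\tau)})\in H_\Phi$ convergent in the Laurent-completion topology of Subsection \ref{Laurent}, the exponential identity $1_{\text{SS}(\tau)}=\exp(\varepsilon(\tau))$ a formal consequence, and invertibility of $1_{\text{SS}(\tau)}$ automatic with inverse $\exp(-\varepsilon(\tau))$.

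The substantive content is that $\nu_\tau:=(\mathbb{L}-1)\varepsilon(\tau)$ in fact lies in $H_{\text{reg},\Phi}$, which I would invoke directly from Joyce's no-pole theorem; this is the only nontrivial input. In \cite{js}, \cite{b} it is proved by expanding $\log(1_{\text{SS}(\tau)})$ into an alternating sum indexed by flags of semistable sheaves of slope $\tau$ and exhibiting explicit cancellations between contributions of classifying stacks $[\text{pt}/\GL_n]$ through symmetric-function identities; the resulting expression, while a priori only in $H_\Phi$, is divisible by $\mathbb{L}-1$ times a virtual variety class. The only hypotheses needed are that $\mu$ defines a stability condition on $\text{Coh}_{\leq 1}(W)$ and that each graded piece of the semistable locus is of finite type, both of which are built into the setup of Subsections \ref{stabi} and \ref{Laurent}.

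For the automorphism claim, I would write $\add_{1_{\text{SS}(\tau)}}=\exp(\ad_{\varepsilon(\tau)}):H_\Phi\to H_\Phi$. Although $\varepsilon(\tau)$ is not itself regular, for any $x\in H_{\text{reg},\Phi}$ one has
\[\ad_{\varepsilon(\tau)}(x)=\frac{1}{\mathbb{L}-1}[\nu_\tau,x],\]
and the regularity-closedness of the commutator --- that the bracket of two elements of $H_{\text{reg},\Phi}$ lies in $(\mathbb{L}-1)H_{\text{reg},\Phi}$, which is the very compatibility underlying the Poisson bracket whose existence is used in Theorem \ref{poi} --- guarantees that $\ad_{\varepsilon(\tau)}(x)$ is again regular, and hence that $\exp(\ad_{\varepsilon(\tau)})$ preserves $H_{\text{reg},\Phi}$. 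Reducing modulo $(\mathbb{L}-1)$, the operator $\tfrac{1}{\mathbb{L}-1}[\nu_\tau,-]$ is by definition the Poisson bracket $\{\nu_\tau,-\}$ on $H_{\text{sc}}$, so the induced Poisson automorphism is $\exp(\{\nu_\tau,-\})$ as claimed. The main obstacle is nothing beyond invoking Joyce's theorem --- everything else is a formal manipulation in the Laurent-completed Hall algebra --- and the proof in \cite[Proposition 76 and Corollary 77]{bs} for the analogous statement can be followed verbatim.
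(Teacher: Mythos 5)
Your proposal is correct and follows the same route as the paper, which simply cites \cite[Proposition 76 and Corollary 77]{bs} and Joyce's no-pole theorem \cite[Theorem 3.11]{js}, \cite[Theorem 6.3]{b} and records the statement as a corollary. You unpack exactly the content of those references---pronilpotence giving convergence of $\log(1_{\text{SS}(\tau)})$, the no-pole theorem as the sole nontrivial input giving regularity of $\nu_\tau$, and the $(\mathbb{L}-1)$-divisibility of commutators of regular elements producing the Poisson automorphism---so the argument matches the paper's intent; nothing to flag.
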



\begin{proof}[Proof of Theorem $3.4$]
The proof follows \cite[Proof of Theorem 6]{bs}.
Use Theorem \ref{poi},
Propositions \ref{inte} and \ref{nopole}, and Proposition \ref{eq} for $s=b$ to obtain that
$$\Psi\left([\text{DT}_{\pi}\to\mathfrak{M}]\right)=
\Psi\left([\text{DT}_{\text{tors}}\to\mathfrak{M}]\right)(-1)^{d_\pi}\text{PT}_{\pi}(-q, z).$$ 
Further, for the trivial partition profile, we obtain that
$$\Psi\left([\text{DT}_{0}\to\mathfrak{M}]\right)=\Psi\left([\text{DT}_{\text{tors}}\to\mathfrak{M}]\right)\text{PT}_{0}(-q, z).$$
Use Theorem \ref{poi},
Propositions \ref{inte} and \ref{nopole}, and Proposition \ref{eq} for $s=a$ to obtain that
$$\Psi\left([\text{DT}_{\pi}\to\mathfrak{M}]\right)=\Psi\left([\text{DT}_{0}\to\mathfrak{M}]\right)(-1)^{d_\pi}\text{BS}_{\pi}(-q, z).$$ Putting together these equalities, we obtain that
$$\text{BS}_{\pi}(q, z)=\frac{\text{PT}_{\pi}(q, z)}{\text{PT}_{0}(q, z)}.$$
\end{proof}

\end{document}